\newcommand*{\llbrace}{\{\mskip-6mu\{}
\newcommand*{\rrbrace}{\}\mskip-6mu\}}
\newcommand{\wt}[1]{\widetilde{#1}}
\newcommand{\mr}[1]{\mathrm{#1}}
\newcommand{\mc}[1]{\mathcal{#1}}
\newcommand{\mb}[1]{\mathbb{#1}}
\begin{document}

\title{Analysis of a mixed discontinuous Galerkin method for the
time-harmonic Maxwell equations with minimal smoothness
requirements}
\shorttitle{Analysis of a mixed DG method for Maxwell equations}

\author{%
{\sc
Kaifang Liu
\thanks{Corresponding author. Email: k.liu@utwente.nl}
}\\[2pt]
Department of Applied Mathematics, University of Twente,
P.O. Box 217,\\ 7500 AE Enschede, The Netherlands \\
[6pt]
{\sc Dietmar Gallistl
} \\[2pt]
Institut f\"{u}r Mathematik, Universit\"{a}t 
Jena, 07743 Jena, Germany\\ 
[6pt]
{\sc Matthias Schlottbom
and J.J.W. van der Vegt
} \\ [2pt]
Department of Applied Mathematics, University of Twente,
P.O. Box 217,\\ 7500 AE Enschede, The Netherlands
}
\shortauthorlist{K. Liu \emph{et al.}}

\maketitle

\begin{abstract}
{An error analysis of a mixed discontinuous Galerkin (DG) method
with Brezzi numerical flux for the time-harmonic Maxwell
equations with minimal smoothness requirements is presented. The
key difficulty in the error analysis for the DG method is that
the tangential or normal trace of the exact solution is not
well-defined on the mesh faces of the computational mesh. We
overcome this difficulty by two steps. First, we employ a
lifting operator to replace the integrals of the
tangential/normal traces on mesh faces by volume integrals.
Second, optimal convergence rates are proven by using smoothed
interpolations that are well-defined for merely integrable
functions. As a byproduct of our analysis, an explicit and
easily computable stabilization parameter is given.}
{discontinuous Galerkin method; time-harmonic Maxwell
equations; minimal regularity; lifting operator;
Brezzi numerical flux.}
\end{abstract}

\section{Introduction}
\label{sec:intro}
We consider the analysis of mixed discontinuous Galerkin
approximations for the time-harmonic Maxwell equations with low
regularity solutions: find $\bm{u},p$ such that 
\begin{subequations}
	\label{eq:Max}
\begin{align}
	\nabla \times(\mu^{-1}\nabla \times \bm{u})- k^2\varepsilon\bm{u}
		-\varepsilon \nabla p &= \bm{j}\quad \mbox{in}\ \Omega, 
		\label{eq:Max-1}\\
	\nabla \cdot (\varepsilon\bm{u}) &= 0\quad \mbox{in}\ \Omega,
	\label{eq:Max-2}\\
	\bm{n}\times \bm{u} &= \bm{0}\quad \mbox{on } \Gamma, 
	\label{eq:Max-3}\\
	p &= 0 \quad  \mbox{on}\  \Gamma. \label{eq:Max-4}
\end{align}
\end{subequations}
Here, $\bm{u}$ represents the electrical field, $p$ the Lagrange
multiplier used to enforce the divergence constraint
\eqref{eq:Max-2}, $k$ is the wave number and $\bm{j}\in
L^{2}(\Omega)^{3}$ is the source term. The piecewise constant
coefficients $\mu$ and $\varepsilon$ are the magnetic
permeability and electrical permittivity of the media,
respectively. We assume that $\Omega \subset
\mathbb{R}^{d},\ d=2,\ 3$ is a simply connected Lipschitz domain
with connected boundary $\Gamma$ and $\bm{n}$ is the external
unit normal vector. 

Several numerical methods for the approximation of the
time-harmonic Maxwell equations have been investigated, such as
finite-difference time-domain (FDTD) methods \citep[see,
e.g.,][]{Taflove2005, Gedney2011}, conforming finite element
methods \citep[see, e.g.,][]{Brenner2008,Ern2004,Monk2003} and
discontinuous Galerkin methods \citep[see, e.g.,][]{Ern2012,
Arnold2002, Perugia2003,Houston2005}. Standard FDTD methods suffer from serious accuracy loss near curved boundaries and
singularities, see, e.g., \cite{Nicolaides04} for a modified scheme for complex geometries and further references.
Furthermore, the corresponding error analysis for
low regularity problems is challenging; let us refer to \cite{Jovanovic2014} for the analysis of finite difference schemes for certain linear elliptic, parabolic and hyperbolic equations with minimal regularity assumptions on the solution. While curl-conforming finite element methods
(FEMs) have a proper mathematical foundation, it is difficult to
construct curl-conforming approximations in the context of
\textit{hp}-adaptation \citep[see, e.g.,][]{Monk2003,
Demkowicz2003}. In comparison, discontinuous Galerkin (DG)
methods are well suited for complex geometries,
\textit{hp}-adaptation and parallel computing.

There are several papers devoted to solving the time-harmonic
Maxwell equations using DG methods. In \cite{Perugia2002}, an
interior-penalty DG method was proposed for the indefinite
time-harmonic Maxwell equations with smooth coefficients. The
method of \cite{Perugia2002}, however, involves many terms and
parameters, which makes the practical implementation difficult.
\citet{Houston2004} introduced a mixed DG method for
\eqref{eq:Max} with $k=0$, which gives a significantly
simplified DG formulation with less terms and allows piecewise
constant coefficients $\mu$ and $\varepsilon$. Also, by adding an
auxiliary variable to transform the DG discretization into a
standard mixed formulation, i.e., a saddle-point problem without
penalty, the error analysis was greatly simplified.
Subsequently, the formulation was further simplified by removing
the standard penalization term of the normal jump at mesh
interfaces \cite[]{Houston2005b}. This simplification allowed
the use of discrete Helmholtz decomposition for the analysis of
the DG method.

In the mentioned DG methods \cite[]{Perugia2002, Houston2004,
Houston2005}, the \textit{a priori} error estimates requires
relatively high regularity of the exact solution of the
time-harmonic Maxwell equations. However, strong smoothness
assumptions are not realistic in general, since the solution of
the Maxwell equations may exhibit singularities and is non-smooth
at sharp corners and material interfaces \cite[]{Costabel1999}.
An explicit low regularity bound of the Maxwell equations can be
found, e.g., in \cite[Theorem 5.1]{Bonito2013a}. 

There are several papers devoted to the analysis of finite
element methods for the time-harmonic Maxwell equations with low
regularity solution. \cite{Ciarlet2016} proposed an error
estimate for low-regularity electromagnetic fields, where the
fields are decomposed into a regular part and a gradient, which
are approximated by the classical N\'{e}d\'{e}lec interpolation
\cite[Section 5.5]{Monk2003} and the
Cl\'{e}ment/Scott-Zhang interpolation \cite[see,
e.g.,][]{Ern2004, Brenner2008}, respectively. \cite{Ern2017b}
presented optimal error estimates for a conforming FEM for low
regularity Maxwell equations, which crucially employs recent
results on the commuting quasi-interpolation \cite[]{Ern2016}
defined on function spaces with low regularity index and their
corresponding quasi-best approximation \cite[]{Ern2017}.

The key difficulty in the error analysis of non-conforming FEMs
for non-smooth problems is that the classical trace theorems are
not applicable, i.e., the exact solution does not have  a
sufficiently regular trace on mesh faces. Until now, only a few
techniques have been developed to overcome this difficulty.  
One technique for the Maxwell equations relies on the definition
of generalized traces \cite[Proposition 7.3 and Assumption
4]{Buffa2006}. In the spirit of \cite{Buffa2006},
\cite{Bonito2016} proposed an interior-penalty method with $C^0$
finite elements for the Maxwell equations with minimal
smoothness requirements. Recently, \cite{Ern2019} analyzed a
non-conforming approximation of elliptic PDEs with minimal
regularity by introducing a generalized normal derivative of the
exact solution at the mesh faces. They also showed that this
idea can be extended to solve the time-harmonic Maxwell
equations with low regularity solutions by introducing a more
general concept for the tangential trace. Another technique that
avoids the definition of generalized traces, which has been
proposed by \cite{Gudi2010} in the context of elliptic PDEs, is
to use an enriching map to transform a non-conforming function
into a conforming one.

In this paper, we analyse a mixed DG formulation for the Maxwell
equations with low regularity solutions, which modifies the
method of \cite{Houston2004} by employing Brezzi numerical
fluxes \cite[]{Brezzi2000}. The main objective is to generalize
the error analysis of \cite{Houston2004} to the non-smooth case
and present optimal a priori error estimates for the low
regularity solution in the broken Sobolev space
$H^s(\mathcal{T}_h),s\geq 0$ with $\mathcal{T}_h$ the finite
element partition. The proof of our a priori error analysis is
different from \cite[]{Buffa2006, Bonito2013a, Ern2019} in that,
first, it employs a lifting operator that allows us to replace
integrals over faces by integrals over volumes and, thus, avoids
the definition of a generalized tangential trace on mesh faces.
Second, smoothed interpolations, which are well-defined for
merely integrable functions, are used to prove optimal
convergence rates. A further major benefit of using the lifting
operator is that we obtain an explicit expression for
stabilization parameters, which, compared to \cite{Houston2004},
facilitates the implementation considerably.

The paper is organized as follows. We introduce notation and
the variational formulation of the time-harmonic Maxwell
equations in Section \ref{sec:preliminaries}. The finite element
spaces and the mixed discontinuous Galerkin method with the
Brezzi numerical flux are presented in Section \ref{sec:FEM
spaces}. We state the main results of the paper in Section
\ref{sec:main results}. An auxiliary variational formulation in
the spirit of \cite{Houston2004} and some interpolation error
estimates are presented in Section \ref{sec:interpolations}.
Next, we first derive an error estimate for the Maxwell
equations \eqref{eq:Max-1} with $k=0$ in Section
\ref{sec:definiteMax}, and subsequently, we show in Section
\ref{sec:well-posedness} the well-posedness and error estimates
of the mixed DG method for the indefinite Maxwell equations,
i.e., $k\neq 0$. Some auxiliary results are proven in the
Appendix.

\section{Preliminaries}
\label{sec:preliminaries}

\subsection{Function spaces}

We introduce standard notation for Sobolev spaces. For a generic
bounded Lipschitz domain $D\subset \mathbb{R}^d,\ d=2,\ 3$, we
denote by $H^{m}(D)$ the usual Sobolev space of integer order
$m\geq 0$ with norms $\|\cdot \|_{m,D}$, and write $L^{2}(D) =
H^{0}(D)$. We also write $\|\cdot\|_{m,D}$ for the norm of
vector-valued function spaces $H^{m}(D)^d$. The fractional
Sobolev spaces $H^{s}(D)$ (resp. $H_0^{s}(D))$, $s\in (m,m+1)$
are defined by real interpolation between $H^m(D)$ and
$H^{m+1}(D)$, resp., $H^m_0(D)$ and $H^{m+1}_0(D)$, see, e.g.,
\cite{Tartar2007, Brenner2008}. The space $H^{s}_0(D)=[L^2(D),
H^1_0(D)]_{s,2}$ with zero trace is equivalent to the completion
of $C^{\infty}_0(D)$ with respect to the norm $\|\cdot
\|_{s,D}$, except for $s=\frac{1}{2}$. For $s=\frac{1}{2}$, it
holds that $[L^2(D), H^1_0(D)]_{1/2, 2} = H^{1/2}_{00}(D)$, but
the completion of $C^{\infty}_0(D)$ is $H^{1/2}(D)$
\cite[see][Chapter 33]{Tartar2007}. Hence, the space
$H^{1/2}_0(D)$ here is actually $H^{1/2}_{00}(D)$. We denote by
$(\cdot,\cdot)_D$ the standard inner product in $L^{2}(D)^{d}$
and denote by $L^{2}_{\varepsilon}(D)^{d}$ the space
$L^{2}(D)^{d}$ endowed with the $\varepsilon$-weighted inner
product given by $(\bm{u},\bm{v})_{\varepsilon,D} =\int_{D}
\varepsilon \bm{u} \cdot \bm{v}\,\mathrm{d}x $ for
$\varepsilon(\bm{x})$ being uniformly symmetric and positive
definite. If $D=\Omega$, we write $(\cdot,\cdot)_{\varepsilon}$
for $(\cdot,\cdot)_{\varepsilon,\Omega}$.

We will also use the following spaces
\begin{align*}
	H(\mathrm{curl},\Omega) &=\{ \bm{v}\in L^{2}(\Omega)^{d}:\
		\nabla\times \bm{v}\in L^{2}(\Omega)^{2d-3}\}, \\
	H_{0}(\mathrm{curl},\Omega) &=\{ \bm{v}\in H(\mathrm{curl},\Omega):\ 
		\bm{n}\times \bm{v}=0 \mbox{ on } \partial \Omega\},\\
	H(\mathrm{div}_{\varepsilon},\Omega) &= \{ \bm{v}\in 
		L^{2}(\Omega)^{d}:\ \nabla\cdot (\varepsilon \bm{v})\in 
		L^{2}(\Omega) \},\\
	H(\mathrm{div}_{\varepsilon}^{0},\Omega) &= \{ \bm{v}\in 
		H(\mathrm{div}_{\varepsilon},\Omega):\ \nabla\cdot 
		(\varepsilon \bm{v})=0 \}.
\end{align*}
Here, $\nabla\times \bm{u}=\left( \frac{\partial u_3}{\partial
x_2}-\frac{\partial u_2}{\partial x_3}, \frac{\partial
u_1}{\partial x_3}-\frac{\partial u_3}{\partial x_1},
\frac{\partial u_2}{\partial x_1}-\frac{\partial u_1}{\partial
x_2} \right)^T $ for $d=3$ and $\nabla\times \bm{u}
=\frac{\partial u_2}{\partial x_1}-\frac{\partial u_1}{\partial
x_2} $ for $d=2$.

The space $H_{0}(\mathrm{curl},\Omega)$ allows an $(\cdot,
\cdot)_{\varepsilon}$-orthogonal Helmholtz decomposition
\cite[Lemma 4.5]{Monk2003}
\begin{equation}
	\label{eq:Helmholtz}
	H_{0}(\mathrm{curl},\Omega)=W\oplus \nabla 
	H^1_0(\Omega),
\end{equation}
where $W=H_{0}(\mathrm{curl},\Omega)\cap
H(\mathrm{div}_{\varepsilon}^{0},\Omega)$, with compact
imbedding $W \subset \subset  L_{\varepsilon}^2(\Omega)^d$, see
\cite[Theorem 4.7]{Monk2003} for details.

\subsection{Variational formulation}

Throughout the paper we assume that the coefficients
$\mu,\varepsilon$ are piecewise constant matrix-valued functions
such that there exist positive constants $\mu_*,\mu^*,
\varepsilon_*, \varepsilon^*$ with
\begin{align}\label{eq:coefficients}
	\mu_{*}|\bm{\xi}|^2  \leq \sum_{i,j=1}^d
		\mu_{ij}(\bm{x})\xi_i\xi_j\leq \mu^{*}|\bm{\xi}|^2
		\ ~ \mbox{and }~
	\varepsilon_{*}|\bm{\xi}|^2  \leq \sum_{i,j=1}^d
		\varepsilon_{ij}(\bm{x})\xi_i\xi_j\leq \varepsilon^{*}|\bm{\xi}|^2,
\end{align}
for a.e. $\bm{x}\in \overline{\Omega}$, and all vectors
$\bm{\xi}\in \mathbb{R}^d$. More precisely, we assume that $\mu
$ and $\varepsilon$ are piecewise constant with respect to some
partition $\mathcal{T}_h$ of $\Omega$ into Lipschitz polyhedra.
In the following, we also assume that $k^{2}$ is not an interior
Maxwell eigenvalue, see \cite[Section 1.4.2]{Monk2003} or
\cite[(11.2.6)]{Boffi2013} for a definition. 

Let $V:=H_{0}(\mathrm{curl},\Omega) $ and
$Q:=H^{1}_{0}(\Omega)$. Define the bilinear forms $a(\cdot
,\cdot )$ and $b(\cdot ,\cdot )$ as
\begin{alignat*}{3}
	a(\bm{u},\bm{v})&=(\mu^{-1}\nabla\times \bm{u}, \nabla\times \bm{v}), \qquad 
		&& \forall\, \bm{u},\bm{v}\in V,  \\
	b(\bm{v},p)&=-(\varepsilon \bm{v},\nabla p), && \forall\, \bm{v}\in 
		V,\, p \in Q.
\end{alignat*}
The mixed variational formulation of the time-harmonic Maxwell
equations is to find $\bm{u}\in V$ and $p \in Q$ such that 
\begin{alignat}{3}
	a(\bm{u},\bm{v})-k^2(\varepsilon\bm{u},\bm{v})+b(\bm{v},p) &= 
	(\bm{j},\bm{v}), \qquad && \forall\, \bm{v}\in V,  \label{eq:conForm1}\\
	b(\bm{u},q)&=0,  && \forall\, q\in Q. \label{eq:conForm2}
\end{alignat}

Due to that $a(\cdot,\cdot)$ is continuous and coercive on the
kernel of $b$, and $b(\cdot,\cdot)$ is continuous and satisfies
the inf-sup condition, see \cite[Section 2.3]{Houston2005a} or
\cite[Theorem 11.2.1]{Boffi2013}, the variational problem is
well-posed.

\begin{lemma}[Theorem 11.2.1, \cite{Boffi2013}]
	\label{lem:uniqueness}
	Suppose that $k^2$ is not a Maxwell eigenvalue. The
	variational problem \eqref{eq:conForm1}-\eqref{eq:conForm2}
	has a unique solution $(\bm{u},p)\in V\times Q$, and there
	exists a constant $C>0$ such that 
	\[
		\|\bm{u}\|_{H(\mathrm{curl},\Omega)}+\|p\|_{1,\Omega} 
		\leq C \|\bm{j}\|_{0,\Omega}.
	\]
\end{lemma}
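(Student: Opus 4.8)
The plan is to treat \eqref{eq:conForm1}-\eqref{eq:conForm2} as a mixed problem and to combine the standard saddle-point theory with a Fredholm argument in order to handle the indefinite zeroth-order term $-k^2(\varepsilon\cdot,\cdot)$. The first move is to reduce the system to the kernel of $b$ using the $(\cdot,\cdot)_\varepsilon$-orthogonal Helmholtz decomposition \eqref{eq:Helmholtz}: writing $\bm{v}=\bm{w}+\nabla q$ with $\bm{w}\in W$, the constraint \eqref{eq:conForm2} forces $\bm{u}\in W$; testing \eqref{eq:conForm1} with $\bm{v}\in W$ makes the term $b(\bm{v},p)=-(\varepsilon\bm{v},\nabla p)$ vanish; and testing with $\bm{v}=\nabla q$ yields a decoupled equation for $p$. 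So it suffices to solve, first, the reduced problem of finding $\bm{u}\in W$ with $a(\bm{u},\bm{v})-k^2(\varepsilon\bm{u},\bm{v})=(\bm{j},\bm{v})$ for all $\bm{v}\in W$, and then the scalar problem $(\varepsilon\nabla p,\nabla q)=-(\bm{j},\nabla q)$ for all $q\in Q$.

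The two structural ingredients are coercivity of $a$ on $W$ and an inf-sup condition for $b$. For the inf-sup condition, given $q\in Q$ I would take $\bm{v}=-\nabla q$, which lies in $V=H_0(\mathrm{curl},\Omega)$ because $\nabla\times\nabla q=\bm{0}$ and $\bm{n}\times\nabla q=\bm{0}$ for $q\in H^1_0(\Omega)$; then $b(\bm{v},q)=(\varepsilon\nabla q,\nabla q)\ge\varepsilon_*|q|_{1,\Omega}^2$ while $\|\bm{v}\|_{H(\mathrm{curl},\Omega)}=|q|_{1,\Omega}$, and the Poincar\'e inequality on $Q$ converts this into $\sup_{\bm{v}\in V}b(\bm{v},q)/\|\bm{v}\|_{H(\mathrm{curl},\Omega)}\ge\beta\|q\|_{1,\Omega}$ with $\beta>0$. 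For coercivity, the kernel of $b$ equals $W=H_0(\mathrm{curl},\Omega)\cap H(\mathrm{div}^0_\varepsilon,\Omega)$, and the compact embedding $W\subset\subset L^2_\varepsilon(\Omega)^d$ quoted after \eqref{eq:Helmholtz} yields, by the usual contradiction argument (a normalized sequence in $W$ with $\nabla\times(\cdot)\to\bm{0}$ would subconverge in $L^2$ to a curl-free, $\varepsilon$-divergence-free field with vanishing tangential trace, hence to $\bm{0}$ since $\Omega$ is simply connected with connected boundary), the Friedrichs-type bound $\|\bm{v}\|_{0,\Omega}\le C_W\|\nabla\times\bm{v}\|_{0,\Omega}$ for $\bm{v}\in W$; together with \eqref{eq:coefficients} this gives $a(\bm{v},\bm{v})\ge(\mu^*)^{-1}\|\nabla\times\bm{v}\|_{0,\Omega}^2\ge c\,\|\bm{v}\|_{H(\mathrm{curl},\Omega)}^2$ on $W$.

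With these ingredients the case $k=0$ is immediate from the Lax--Milgram lemma on $W$ (equivalently, from the Brezzi theorem), together with the corresponding a priori bound. For $k\neq0$ the mapping $\bm{v}\mapsto k^2(\varepsilon\bm{u},\bm{v})$ factors through the compact inclusion $W\hookrightarrow L^2_\varepsilon(\Omega)^d$, so the reduced operator on $W$ is a compact perturbation of a coercive one and the Fredholm alternative applies; the hypothesis that $k^2$ is not a Maxwell eigenvalue is precisely injectivity of this operator, which then gives bijectivity and a bound $\|\bm{u}\|_{H(\mathrm{curl},\Omega)}\le C\|\bm{j}\|_{0,\Omega}$ with a finite constant (uniformity being enforced by a standard compactness/closed-graph argument). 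Finally, since $a(\bm{u},\nabla q)=0$ and $(\varepsilon\bm{u},\nabla q)=0$ when $\bm{u}\in W$, equation \eqref{eq:conForm1} tested with $\bm{v}=\nabla q$ collapses to $(\varepsilon\nabla p,\nabla q)=-(\bm{j},\nabla q)$, solved uniquely in $Q$ by Lax--Milgram with $|p|_{1,\Omega}\le\varepsilon_*^{-1}\|\bm{j}\|_{0,\Omega}$, and the Poincar\'e inequality upgrades this to the stated $\|p\|_{1,\Omega}$ bound; uniqueness of $(\bm{u},p)$ follows by applying the same reduction to the homogeneous problem.

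The step I expect to be the main obstacle is the indefiniteness: since $a(\cdot,\cdot)-k^2(\varepsilon\cdot,\cdot)$ is not coercive on $W$, the basic (coercive) Brezzi theorem does not apply directly, and one must isolate the coercive principal part and invoke the Fredholm alternative. The two facts that make this go through are the compact embedding $W\subset\subset L^2_\varepsilon(\Omega)^d$, which renders the zeroth-order term a compact perturbation, and the assumption that $k^2$ is not a Maxwell eigenvalue, which supplies injectivity; together they also fix the value of the stability constant in the a priori estimate.
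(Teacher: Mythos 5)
Your proposal is correct and is essentially the argument the paper relies on: the paper does not prove this lemma itself but quotes it from Boffi et al.\ (Theorem 11.2.1), justifying it in the preceding paragraph by exactly the ingredients you verify — continuity, coercivity of $a$ on the kernel of $b$ (via the compact embedding $W\subset\subset L^2_\varepsilon(\Omega)^d$ and the resulting Friedrichs inequality), the inf-sup condition for $b$ obtained by testing with $\bm{v}=-\nabla q$, and a Fredholm argument for the compact perturbation $-k^2(\varepsilon\cdot,\cdot)$ under the assumption that $k^2$ is not a Maxwell eigenvalue. Your write-up is a faithful expansion of that standard saddle-point proof, with no gaps.
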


The following stability results, which were proven in
\cite{Bonito2013a}, are very useful for our error estimates.

\begin{lemma}[Theorem 5.1, \cite{Bonito2013a}]
	\label{lem:regularity}
	Suppose that \eqref{eq:coefficients} holds. Then the weak
	solution $(\bm{u},p)\in V\times Q$ of the variational
	problem \eqref{eq:conForm1}-\eqref{eq:conForm2} satisfies 
	\begin{align*}
		\|\bm{u}\|_{s,\Omega} & \leq C 
		\|\bm{j}\|_{0,\Omega}, \quad \forall\, 
		0\leq s <\tau_{\varepsilon},\\
		\|\nabla\times \bm{u}\|_{s,\Omega} & \leq C 
		\|\bm{j}\|_{0,\Omega}, \quad \forall\, 
		0\leq s <\tau_{\mu},\\
		\|\nabla\times (\mu^{-1}\nabla\times \bm{u})\|_{0, \Omega}
		+ \|\nabla p\|_{0,\Omega} & \leq C 
		\|\bm{j}\|_{0,\Omega},
	\end{align*}
	where the positive constants $C$, $\tau_{\varepsilon}$ and
	$\tau_{\mu}$ depend only on $\Omega$ and $\varepsilon$ and
	$\mu$.
\end{lemma}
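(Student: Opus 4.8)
The plan is to derive the three bounds from the a~priori estimate of Lemma~\ref{lem:uniqueness}, combined with an elementary rearrangement of the strong form~\eqref{eq:Max-1} for the last bound and a regularity-shift embedding for divergence-constrained vector fields in heterogeneous media for the two fractional bounds. For the last estimate I would first record that, testing \eqref{eq:conForm1} with $\bm v\in C_0^\infty(\Omega)^d$ and using that $\varepsilon$ is symmetric, the weak solution satisfies $\nabla\times(\mu^{-1}\nabla\times\bm u)=\bm j+k^2\varepsilon\bm u+\varepsilon\nabla p$ in $L^2(\Omega)^d$; taking $L^2$-norms and invoking \eqref{eq:coefficients} and Lemma~\ref{lem:uniqueness} then gives $\|\nabla\times(\mu^{-1}\nabla\times\bm u)\|_{0,\Omega}\le\|\bm j\|_{0,\Omega}+k^2\varepsilon^*\|\bm u\|_{0,\Omega}+\varepsilon^*\|\nabla p\|_{0,\Omega}\le C\|\bm j\|_{0,\Omega}$, where the $\nabla p$ term is already bounded through $\|p\|_{1,\Omega}$ in Lemma~\ref{lem:uniqueness} (equivalently, testing \eqref{eq:conForm1} with $\bm v=\nabla\phi$, $\phi\in Q$, and using \eqref{eq:conForm2} shows that $p$ solves the scalar transmission problem $(\varepsilon\nabla p,\nabla\phi)=-(\bm j,\nabla\phi)$).

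For the two fractional bounds, the first step is to place $\bm u$ and $\nabla\times\bm u$ into the right constrained spaces. From \eqref{eq:conForm2} we have $\nabla\cdot(\varepsilon\bm u)=0$ in the sense of distributions, so $\bm u$ belongs to the space $W=H_0(\mathrm{curl},\Omega)\cap H(\mathrm{div}_\varepsilon^0,\Omega)$ appearing in the Helmholtz decomposition~\eqref{eq:Helmholtz}; the bound on $\|\bm u\|_{s,\Omega}$ then follows from an $\varepsilon$-dependent embedding $W\hookrightarrow H^s(\Omega)^d$, valid for all $0\le s<\tau_\varepsilon$ with $\|\bm v\|_{s,\Omega}\le C_s(\|\bm v\|_{0,\Omega}+\|\nabla\times\bm v\|_{0,\Omega})$ for $\bm v\in W$, together with Lemma~\ref{lem:uniqueness}. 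For the curl I would set $\bm z:=\nabla\times\bm u$: then $\nabla\cdot\bm z=0$ and $\bm n\cdot\bm z=0$ on $\Gamma$ (the latter because $\bm n\times\bm u=\bm 0$, by a surface integration by parts), while, by the first paragraph, $\mu^{-1}\bm z\in H(\mathrm{curl},\Omega)$ with $\nabla\times(\mu^{-1}\bm z)\in L^2(\Omega)^{2d-3}$. Fields in this $\mu$-constrained class satisfy the analogous embedding $\bm z\in H^s(\Omega)^{2d-3}$ for all $0\le s<\tau_\mu$, with $\|\bm z\|_{s,\Omega}\le C_s(\|\bm z\|_{0,\Omega}+\|\nabla\times(\mu^{-1}\bm z)\|_{0,\Omega})$; combining this with Lemma~\ref{lem:uniqueness} and the first paragraph yields the bound on $\|\nabla\times\bm u\|_{s,\Omega}$.

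What remains — and this is the only substantial point, into which all the geometric and material information is absorbed through the exponents $\tau_\varepsilon$ and $\tau_\mu$ — is to establish the two embeddings above. The strategy I would follow is: (a) reduce each vector estimate, via potential representations and the de~Rham complex (using that $\Omega$ is simply connected with connected boundary $\Gamma$), to the regularity of the scalar transmission problems for $-\nabla\cdot(\varepsilon\nabla\cdot)$ and $-\nabla\cdot(\mu\nabla\cdot)$ with homogeneous Dirichlet, respectively Neumann, data; (b) invoke the fractional elliptic regularity for such transmission problems on a Lipschitz domain partitioned into Lipschitz polyhedra with piecewise constant coefficient, namely that there is some $\tau>0$, governed by the corner, edge and interface singular exponents, with the scalar solution in $H^{1+\tau}$ whenever the right-hand side is in $L^2$; and (c) set $\tau_\varepsilon$ and $\tau_\mu$ equal to the resulting shift exponents (capped at the thresholds below which the coefficient jumps are harmless) and interpolate to reach all $s$ strictly below them. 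The hard part is precisely steps (a)--(b) at minimal smoothness: $H^2$-regularity fails near reentrant corners and material interfaces, so one cannot argue with integer exponents, and the potential reconstruction together with the normal and tangential traces must be controlled for merely $H(\mathrm{curl})$ fields. This analysis is carried out in detail in \cite{Bonito2013a}, to which I refer for the precise constructions and for explicit characterizations of $\tau_\varepsilon$ and $\tau_\mu$.
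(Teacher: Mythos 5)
The paper gives no proof of this lemma: it is quoted directly as Theorem~5.1 of \cite{Bonito2013a}, so the ``paper's own proof'' is the citation itself. Your sketch is a correct and consistent reconstruction of how that cited result is obtained --- the distributional identity for $\nabla\times(\mu^{-1}\nabla\times\bm u)$ and the reduction of the fractional bounds to the $H^s$-embeddings of the $\varepsilon$- and $\mu$-constrained spaces are exactly the route of \cite{Bonito2013a} --- and you correctly defer the only substantive ingredient (the embeddings themselves, via potentials and scalar transmission regularity) to that same reference, so this is essentially the same, citation-based, approach.
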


Note that in general the differentiability indices
$\tau_{\varepsilon},\tau_{\mu}$ are less than $1/2$ for
Lipschitz domains and discontinuous coefficients $\varepsilon$
and $\mu$ \cite[see][]{Bonito2016}.

\section{Mixed discontinuous Galerkin discretization}
\label{sec:FEM spaces}

\subsection{Finite element spaces}

Let $\mathcal{T}_{h}$ be a shape regular partition of the domain
$\Omega$ into tetrahedra, such that the coefficients are
constant on each $K\in \mathcal{T}_{h}$. We denote by $h_{K}$
the diameter of an element $K$ and denote $h=\max_{K\in
\mc{T}_h}h_K$. For an integer $\ell\geq 0$ and an element $K \in
\mathcal{T}_{h}$, we define $\mathbb{P}_{\ell}(K)$ as the space
of polynomials of total degree $\ell$ in $K$. Let
$\mathcal{F}_{h}$ be the union of interior faces
$\mathcal{F}_{h}^{I}$ and boundary faces $\mathcal{F}_{h}^{B}$.
For piecewise smooth vector- or scalar-valued functions $\bm{v}$
and $q$, we define jumps and averages at faces in the mesh
$\mathcal{T}_{h}$. Let $F \in \mathcal{F}_{h}^{I}$ be an
interior face shared by two elements $K^{+}$ and $K^{-}$ and let
$\bm{n}^\pm$ be the unit outward normal vectors on the
boundaries $\partial K^{\pm}$, then the tangential and normal
jumps across $F$ are, respectively, defined by $\llbracket
\bm{v} \rrbracket_{T}=\bm{n}^{+}\times \bm{v}^{+} +
\bm{n}^{-}\times \bm{v}^{-}$, $\llbracket \bm{v}
\rrbracket_{N}=\bm{v}^{+}\cdot \bm{n}^{+} + \bm{v}^{-}\cdot
\bm{n}^{-}$, and $\llbracket q \rrbracket_{N} =
q^{+}\bm{n}^{+}+q^{-}\bm{n}^{-}$. We also define the averages by
$\llbrace \bm{v}\rrbrace=(\bm{v}^{+}+ \bm{v}^{-})/2$, and
$\llbrace q\rrbrace=(q^{+}+ q^{-})/2$. If $F \in
\mathcal{F}_{h}^{B}$ is a boundary face, we set $\llbracket
\bm{v} \rrbracket_{T}=\bm{n}\times \bm{v}$, $\llbracket \bm{v}
\rrbracket_{N}=\bm{v}\cdot\bm{n} $, $\llbracket q
\rrbracket_{N}=q\bm{n}$, $\llbrace \bm{v}\rrbrace=\bm{v}$ and
$\llbrace q\rrbrace=q$.

We define the broken Sobolev spaces with respect to the partition
$\mathcal{T}_h$ of $\Omega$ as
\[
	H^{s}(\mathcal{T}_{h})^{d}=\{ \bm{v}\in L^{2}(\Omega)^{d}:\ 
	\bm{v}_{|K}\in H^{s}(K)^{d}, \quad 
	\forall\, K \in \mathcal{T}_{h}\},
\]
with norm 
\[
	\|\bm{v}\|_{s,\mathcal{T}_{h}}^{2}=\sum_{K \in \mathcal{T}_{h}}
		\|\bm{v}\|_{s,K}^{2}.
\]
Moreover, we define the finite element spaces without
inter-element continuity condition as 
\begin{align}
	V_{h} &:= \{ \bm{u} \in L^{2}(\Omega)^{d}:\ 
		\bm{u}_{|K}\in R_{\ell}(K),\quad  \forall\, K \in 
		\mathcal{T}_{h}\}, \label{eq:Vh} \\
	Q_{h} &:= \{ q \in L^{2}(\Omega):\ q_{|K}\in 
		\mathbb{P}_{\ell}(K),\quad  \forall\, K \in \mathcal{T}_{h}\},
		\label{eq:Qh}
\end{align} 
where $R_{\ell}$ denotes the space of N{\'e}d{\'e}lec functions
of degree $\ell$, i.e., for $d=3$,
$R_{\ell}=(\mathbb{P}_{\ell-1})^{3}\oplus S_{\ell}$, and
$S_{\ell}=\{ \bm{q} \in (\wt{\mathbb{P}}_{\ell})^{3}:\
\bm{x}\cdot \bm{q}=0 \}$ with $\wt{\mb{P}}_{\ell}$ being the
homogeneous polynomials of degree $\ell$, see \cite[][Remark
5.29]{Monk2003} for $d=2$. 

We also define $H(\mathrm{curl})$-conforming subspaces with and
without vanishing tangential trace on the boundary,
respectively, as
\begin{equation}
	\label{eq:Vhc}
	V_{h0}^{\mathrm{c}}:=V_h\cap H_{0}(\mathrm{curl},\Omega)
	\quad \mbox{and} \quad
	V_h^{\mathrm{c}}:=V_h\cap H(\mathrm{curl},\Omega).
\end{equation}

\subsection{Lifting operator}

The following lifting operators are useful in the DG
discretization by replacing the penalty terms over faces by
volume integrals, which make sense also for low regularity
functions. First, we define the local lifting operator
$\mathcal{R}_{F}:\ L^{2}(F)^{3}\rightarrow V_{h}$ on a single
face $F \in \mathcal{F}_{h}$ by 
\begin{equation}
	\label{eq:lifting}
	\int_{\Omega}\mathcal{R}_{F}(\eta)\cdot \bm{v} \,\mathrm{d}\bm{x}
	=\int_{F}\eta \cdot \llbrace \bm{v}\rrbrace \,\mathrm{d}s,
\end{equation} 
for all $\bm{v} \in V_{h}$. Since the right hand side is nonzero
only when $\llbrace\bm{v}\rrbrace$ has support on $F$, the
support of the lifting $\mathcal{R}_F(\eta)$ is limited to the
elements adjacent to face $F$. Next, the local lifting operator
can be used to define a global one. Define
$\mathcal{R}:L^2(\mathcal{F}_h)^3 \to V_h$ as 
\[
	\mathcal{R}(\eta)=
	\sum_{F \in \mathcal{F}_{h}}\mathcal{R}_{F}(\eta),
	\quad \forall \eta\in L^2(\mathcal{F}_h)^3.
\] 
\citet[Lemma 1 and 2]{Sarmany2010}
showed the following stability property of the local lifting
operator: there exist positive constants $C_{1}$ and $C_{2}$
such that for any $\bm{v}\in V_{h}$
\begin{equation}\label{eq:lifting stability}
	C_{1}h_F^{-1/2}\|\llbracket \bm{v} \rrbracket_{T}\|_{0,F} 
	\leq \|\mathcal{R}_F(\llbracket \bm{v} \rrbracket_{T})\|_{0,\Omega} 
	\leq C_{2}h_F^{-1/2}\|\llbracket 
	\bm{v} \rrbracket_{T}\|_{0,F},\quad \forall F
	\in \mathcal{F}_h,
\end{equation}
where $h_{F}$ denotes the diameter of face $F$ and $C_1,\ C_2$
are independent of $\bm{v}$.

\subsection{Mixed DG discretization}

The mixed discontinuous Galerkin discretization with Brezzi
numerical flux \cite[see, e.g.,][]{Brezzi2000} for the
time-harmonic Maxwell equations is: Find $(\bm{u}_{h},p_{h})\in
V_{h}\times Q_{h}$ such that 
\begin{alignat}{3}
	a_{h}(\bm{u}_{h},\bm{v})-k^2(\varepsilon\bm{u}_{h},\bm{v})
		+b_h(\bm{v},p_{h}) &= 	(\bm{j},\bm{v}), 
		\qquad && \forall \bm{v}\in V_h, 	\label{eq:MDG1} 	\\
	b_{h}(\bm{u}_{h},q)-c_{h}(p_{h},q) &=0,  
		&& \forall q\in Q_h,  	\label{eq:MDG2}
\end{alignat}
where,
\begin{align*}
	a_{h}(\bm{u},\bm{v}) 
	&= 	\left(\mu^{-1} \nabla_{h}\times \bm{u}, 
			\nabla_{h}\times \bm{v} \right)
			- \left( \mathcal{R}(\llbracket \bm{u} \rrbracket_{T}),
			\mu^{-1}\nabla_h\times \bm{v} \right) \\
		&	- \left( \mathcal{R}(\llbracket \bm{v} \rrbracket_{T}),
			\mu^{-1}\nabla_h\times \bm{u} \right)
			+ \sum_{F \in \mathcal{F}_{h}} \alpha_F
			\left(\mu^{-1} \mathcal{R}_{F}(\llbracket \bm{u} \rrbracket_{T}),
			\mathcal{R}_{F}(\llbracket \bm{v} \rrbracket_{T}) \right), \\
	b_{h}(\bm{v},p) 
	&=  -\left( \varepsilon\bm{v}, \nabla_{h}p \right) 
		+ \left( \mathcal{R}(\llbracket p \rrbracket_{N}),
		\varepsilon \bm{v} \right), \\
	c_{h}(p,q)
	&= 	\sum_{F \in \mathcal{F}_{h}}\gamma_{F}
		\left( \varepsilon\mathcal{R}_{F}(\llbracket p\rrbracket_{N}),
		\mathcal{R}_{F}(\llbracket q\rrbracket_{N}) \right).
\end{align*}
Here, $\nabla_{h}$ and $\nabla_h\times$ denote the elementwise
action of the differential operators $\nabla$ and
$\nabla\times$, respectively. We set the parameter $\gamma_{F}$
strictly positive for all $F \in \mathcal{F}_{h}$, and
$\alpha_F>0$ will be chosen later. The readers are referred to
\cite{Houston2004} for the derivation of the DG formulation with
the Brezzi numerical flux replaced by the interior penalty
numerical flux.

\begin{remark}
	The main difference between the mixed DG formulation
	\eqref{eq:MDG1}-\eqref{eq:MDG2} and those discussed in
	\cite[][]{Houston2004, Lu2017} is the use of the lifting
	operator $\mathcal{R}_F$. Two main benefits of using the
	lifting operator are:
	\begin{enumerate}
		\item[$(\mr{i})$] precise condition for $\alpha_F$ can
		be computed from the mesh $\mathcal{T}_{h}$ that ensures
		stability, see Proposition \ref{pop:coercivity}, while
		in practice for interior-penalty formulations,
		depending on the computational mesh, the penalty
		parameters in \cite[][]{Houston2004, Lu2017} need to be
		frequently adjusted;
		\item[$(\mr{ii})$] the bilinear forms $a_h$ and $b_h$ can be
		trivially extended to 
		\[
			a_h:V\times V\to \mathbb{R},\quad
			\mbox{and}\quad 
			b_h:V\times Q \to \mathbb{R}.
		\]
		This avoids the generalization of the tangential trace
		on element faces for low regularity solutions, which
		causes great technical difficulties for nonconforming
		finite element methods for solving problems with low
		regularity solution, see, e.g., \cite{Buffa2006,
		Bonito2016, Ern2019}.
	\end{enumerate}
\end{remark}

\begin{remark}
	\label{rmk:well-defined}
	Since the coefficients $\mu$ and $\varepsilon$ are piecewise
	constant, we note from the definition of the lifting
	operator $\mathcal{R}$ that the bilinear forms
	$a_h:V_h\times V_h\to \mathbb{R}$ and $b_h:V_h \times Q_h\to
	\mb{R}$ have the following equivalent expressions
	\begin{align*}
		a_{h}(\bm{u},\bm{v}) 
		&= 	\left(\mu^{-1}\nabla_{h}\times \bm{u}, 
			\nabla_{h}\times \bm{v} \right) 
			- \int_{\mathcal{F}_{h}} \llbracket \bm{u} \rrbracket_{T} 
			\cdot \llbrace \mu^{-1}\nabla_{h}\times 
			\bm{v}\rrbrace\,\mathrm{d}s \\
		&	-\int_{\mathcal{F}_{h}} \llbracket 
			\bm{v} \rrbracket_{T}\cdot \llbrace \mu^{-1}\nabla_{h}\times 
			\bm{u}\rrbrace\,\mathrm{d}s   
			+ \sum_{F \in \mathcal{F}_{h}} \alpha_F
			\left( \mu^{-1}\mathcal{R}_{F}(\llbracket \bm{u} \rrbracket_{T}),
			\mathcal{R}_{F}(\llbracket \bm{v} \rrbracket_{T}) \right), \\
		b_{h}(\bm{v},p) 
		&=  -\left( \varepsilon\bm{v}, \nabla_{h}p \right) 
			+ \int_{\mathcal{F}_{h}}\llbrace 
			\varepsilon \bm{v}\rrbrace\cdot \llbracket p\rrbracket_{N}\,
			\mathrm{d}s,
	\end{align*}
	for all $\bm{u},\bm{v}\in V_h$ and $p\in Q_h$. Here,
	$\int_{\mathcal{F}_{h}}\cdot \,\mathrm{d}s =\sum_{F \in
	\mathcal{F}_{h}} \int_{F}\cdot \,\mathrm{d}s$. Note that
	compared with the implementation of the DG discretization in
	\cite{Houston2004}, we see that only the penalty terms need to
	be changed.
\end{remark}

\section{The main results}
\label{sec:main results}

We now give explicit bounds on the stabilization parameter
$\alpha_F$ that ensures well-posedness of
\eqref{eq:MDG1}-\eqref{eq:MDG2}. Subsequently, we present an a
priori error estimate for low regularity solutions.

We start with defining $V(h)=V+V_{h}$ and $Q(h)=Q+Q_{h}$ endowed
with the semi-norms and norms 
\begin{align*}
	|\bm{v}|_{V(h)}^{2} 
	&= \|\mu^{-\frac{1}{2}}\nabla_{h}\times \bm{v}\|_{0,\Omega}^{2}
	+\sum_{F \in \mathcal{F}_{h}} \|\mu^{-\frac{1}{2}}
	\mathcal{R}_F(\llbracket \bm{v} \rrbracket_{T})\|_{0,\Omega}^{2},\\
	\|\bm{v}\|_{V(h)}^{2} 
	&= \|\varepsilon^{\frac{1}{2}}\bm{v}\|_{0,\Omega}^{2} 
		+|\bm{v}|_{V(h)}^{2},\\
	\|q\|_{Q(h)}^{2} 
	&= \|\varepsilon^{\frac{1}{2}}\nabla_{h}q\|_{0,\Omega}^{2} 
		+\sum_{F \in \mathcal{F}_{h}}\|\varepsilon^{\frac{1}{2}}
		\mathcal{R}_F(\llbracket q \rrbracket_{N})\|_{0,\Omega}^{2}.
\end{align*}

The following proposition shows $a_h(\cdot,\cdot)$ is coercive
on $V(h)$ respect to the semi-norm $|\cdot|_{V(h)}$ for a simple
and explicit choice of the parameter $\alpha_F$, which
facilitates the implementation of the DG method and is essential
in the proof of the well-posedness of the mixed DG
discretization.
\begin{proposition} [Coercivity]
	\label{pop:coercivity}
	The bilinear form $a_h(\cdot, \cdot)$ satisfies
	\[
		a_{h}(\bm{v},\bm{v})\geq \frac{1}{2} |\bm{v}|_{V(h)}^{2},
		\quad \forall \bm{v}\in V(h)
	\]
	provided $\alpha_{F}\geq \frac{1}{2}+2n_{K}$. Here,
	$n_{K}=d+1$ is the number of faces of an element $K\in
	\mathcal{T}_{h}$.
\end{proposition}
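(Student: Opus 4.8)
The plan is to expand $a_h(\bm v,\bm v)$ and control the two cross terms $-(\mathcal R(\llbracket\bm v\rrbracket_T),\mu^{-1}\nabla_h\times\bm v)$ (each appearing once, so together contributing $-2(\mathcal R(\llbracket\bm v\rrbracket_T),\mu^{-1}\nabla_h\times\bm v)$) by the weighted Cauchy--Schwarz and Young inequalities against the two good terms $\|\mu^{-1/2}\nabla_h\times\bm v\|_{0,\Omega}^2$ and $\sum_F \alpha_F\|\mu^{-1/2}\mathcal R_F(\llbracket\bm v\rrbracket_T)\|_{0,\Omega}^2$. Writing $\mathcal R(\llbracket\bm v\rrbracket_T)=\sum_{F}\mathcal R_F(\llbracket\bm v\rrbracket_T)$ and using that each $\mathcal R_F(\llbracket\bm v\rrbracket_T)$ is supported only on the (at most $n_K=d+1$) elements sharing $F$, I would like a bound of the form
\[
  \Bigl| \bigl(\mathcal R(\llbracket\bm v\rrbracket_T),\mu^{-1}\nabla_h\times\bm v\bigr) \Bigr|
  \le \sqrt{n_K}\,\Bigl(\sum_{F\in\mathcal F_h}\|\mu^{-1/2}\mathcal R_F(\llbracket\bm v\rrbracket_T)\|_{0,\Omega}^2\Bigr)^{1/2} \|\mu^{-1/2}\nabla_h\times\bm v\|_{0,\Omega}.
\]
This is the place where the constant $n_K$ enters: on a fixed element $K$ only the liftings of its $n_K$ faces are nonzero, so by Cauchy--Schwarz in the finite sum over those faces one loses a factor $\sqrt{n_K}$ when passing from $\|\sum_F \mathcal R_F(\cdot)\|_{0,K}^2$ to $n_K\sum_F \|\mathcal R_F(\cdot)\|_{0,K}^2$; summing over $K$ and regrouping by faces gives the displayed inequality. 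I expect this combinatorial bookkeeping — justifying the factor $n_K$ cleanly while handling the piecewise-constant $\mu^{-1}$ weight — to be the main (though still elementary) obstacle.

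With that estimate in hand, set $x=\|\mu^{-1/2}\nabla_h\times\bm v\|_{0,\Omega}$ and $y=\bigl(\sum_F\|\mu^{-1/2}\mathcal R_F(\llbracket\bm v\rrbracket_T)\|_{0,\Omega}^2\bigr)^{1/2}$, so $|\bm v|_{V(h)}^2=x^2+y^2$, and
\[
  a_h(\bm v,\bm v)\ge x^2 - 2\sqrt{n_K}\,xy + \min_F\alpha_F\, y^2.
\]
Apply Young's inequality $2\sqrt{n_K}\,xy\le \tfrac12 x^2 + 2n_K y^2$ to obtain
\[
  a_h(\bm v,\bm v)\ge \tfrac12 x^2 + \bigl(\min_F\alpha_F - 2n_K\bigr) y^2 \ge \tfrac12\bigl(x^2+y^2\bigr)
\]
as soon as $\alpha_F\ge \tfrac12 + 2n_K$ for every $F$, which is exactly the claimed hypothesis; this proves $a_h(\bm v,\bm v)\ge\tfrac12|\bm v|_{V(h)}^2$ on $V_h$.

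Finally I would note that for general $\bm v\in V(h)=V+V_h$ the argument is unchanged: the alternative expression of $a_h$ from Remark~\ref{rmk:well-defined} is not needed here since the liftings $\mathcal R_F$ and the broken curl are defined on all of $V(h)$, and the only structural facts used — the local support of $\mathcal R_F(\llbracket\bm v\rrbracket_T)$ on the $n_K$ elements adjacent to $F$, and $n_K=d+1$ for a simplex — hold verbatim. Hence the stated coercivity follows. $\qquad\square$
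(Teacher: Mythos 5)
Your proposal is correct and follows essentially the same route as the paper: expand $a_h(\bm v,\bm v)$, control the cross term by Cauchy--Schwarz and Young's inequality, and let the factor $n_K$ enter through the fact that each element supports the liftings of only its $n_K$ faces (the paper does this face-by-face with a Young parameter $\delta=\tfrac{1}{4n_K}$, you do it via a single elementwise Cauchy--Schwarz giving $\|\sum_F\mathcal R_F\|_{0,\Omega}^2\le n_K\sum_F\|\mathcal R_F\|_{0,\Omega}^2$, which amounts to the same estimate). The resulting condition $\alpha_F\ge\tfrac12+2n_K$ and the coercivity constant $\tfrac12$ match the paper exactly.
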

\begin{proof}
	From the definition of the lifting operator $\mc{R}_F$, we
	have for any $\bm{v}\in V(h)$
	\begin{align*}
		a_{h}(\bm{v},\bm{v}) 
		=& \|\mu^{-\frac{1}{2}}\nabla_{h}\times \bm{v}\|_{0,\Omega}^{2}
		-2 \sum_{F \in \mathcal{F}_{h}}(\mathcal{R}_{F}
		(\llbracket \bm{v} \rrbracket_{T}), \mu^{-1}\nabla_{h}
		\times \bm{v}) \\ 
		&+ \sum_{F \in \mathcal{F}_{h}}\alpha_{F}
		\|\mu^{-\frac{1}{2}}\mathcal{R}_{F}(\llbracket \bm{v} 
		\rrbracket_{T})\|_{0, \Omega}^{2}  .
	\end{align*}
	Recall that the support of the local lifting operator
	$\mathcal{R}_F(\llbracket \bm{v}\rrbracket_T)$, denoted by
	$\omega_F$, consists of the element(s) adjacent to face $F$.
	By using the Cauchy-Schwarz and Young's inequality, there
	holds
	\begin{align*}
		2 \sum_{F \in \mathcal{F}_{h}}
		&(\mathcal{R}_{F}(\llbracket \bm{v} \rrbracket_{T}),
			\mu^{-1}\nabla_{h}\times \bm{v})\\
		\leq & 2 \sum_{F \in \mathcal{F}_{h}} 
			\|\mu^{-\frac{1}{2}}\mathcal{R}_{F}	(\llbracket \bm{v} 
			\rrbracket_{T})\|_{0,\Omega}	
			\|\mu^{-\frac{1}{2}}\nabla_{h}\times \bm{v}\|
			_{0, \omega_F}\\
		\leq & 2 \sum_{F \in \mathcal{F}_{h}}
			\left( \frac{1}{4\delta} 
			\|\mu^{-\frac{1}{2}}\mathcal{R}_{F} (\llbracket \bm{v} 
			\rrbracket_{T})\|_{0,\Omega}^2 +	
			\delta\|\mu^{-\frac{1}{2}}\nabla_{h}\times \bm{v}\|
			_{0, \omega_F}^2 \right)\\
		\leq & \sum_{F \in \mathcal{F}_{h}}\frac{	1}{2 \delta}
			\| \mu^{-\frac{1}{2}}\mathcal{R}_{F}(\llbracket \bm{v} \rrbracket_{T})\|_{0,\Omega}^{2}
			+2 \delta n_{K} 
			\|\mu^{-\frac{1}{2}}\nabla_{h}\times \bm{v}\|_{0,\Omega}^{2},
	\end{align*}
	where $n_{K}$ is the number of faces of an element $K\in
	\mathcal{T}_{h}$. Hence,
	\begin{align*}
		a_{h}(\bm{v},\bm{v}) 
		&\geq  \left( 1-2 \delta n_{K} \right)\|\mu^{-\frac{1}{2}}
			\nabla_{h}\times \bm{v}\|_{0,\Omega}^{2}
			+ \sum_{F \in \mathcal{F}_{h}}\left(\alpha_{F}-
			\frac{1}{2 \delta} \right)\|\mu^{-\frac{1}{2}}\mathcal{R}_{F}
			(\llbracket \bm{v} \rrbracket_{T})\|_{0,\Omega}^{2}  \\
		&\geq \min\{ 1-2 \delta n_{K}, \alpha_{F}-
		\frac{1}{2 \delta} \}|\bm{v}|_{V(h)}^{2}.
	\end{align*}
	Hence, by setting $\delta=\frac{1}{4n_{K}}$, we can get the
	coercivity with constant $\frac{1}{2}$ if $\alpha_{F} \geq
	\frac{1}{2}+2n_{K}$.
\end{proof}

The following two theorems state the well-posedness of the mixed
DG method \eqref{eq:MDG1}-\eqref{eq:MDG2} and a priori error
estimates.

\begin{theorem} [Existence, uniqueness]
	\label{thm:indefUniqueness}
	Suppose $k^2$ is not a Maxwell eigenvalue, $\alpha_F\geq
	\frac{1}{2}+2n_K$ and $\gamma_F \geq \frac{1}{2}$. For all
	mesh size $h$ small enough, there exists a unique solution
	$(\bm{u}_h,p_h)\in V_h\times Q_h$ to
	\eqref{eq:MDG1}-\eqref{eq:MDG2} satisfying the estimate
	\[
		\|\bm{u}_h\|_{V(h)}+\|p_h\|_{Q(h)}\leq C \|\bm{j}\|_{0},
	\]
	where the constant $C>0$ is independent of the mesh size and
	the solution $(\bm{u}_h,p_h)$.
\end{theorem}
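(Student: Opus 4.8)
The plan is to treat the indefinite problem \eqref{eq:MDG1}-\eqref{eq:MDG2} as a compact perturbation of the definite problem ($k=0$), whose well-posedness (uniform in $h$) is assumed to be available from the earlier sections, and then invoke a discrete Fredholm-type argument. Concretely, since $V_h\times Q_h$ is finite-dimensional, existence and the stability bound are equivalent to uniqueness; so the core task is to prove that, for $h$ small enough, the only solution of \eqref{eq:MDG1}-\eqref{eq:MDG2} with $\bm{j}=\bm{0}$ is the trivial one. The first step is to record the discrete stability of the $k=0$ operator: by Proposition~\ref{pop:coercivity} together with a discrete inf-sup condition for $b_h$ on $V_h\times Q_h$ (which must have been established earlier), the pair of forms $(\bm{v},q)\mapsto a_h(\bm{u}_h,\bm{v})+b_h(\bm{v},p_h)$, $b_h(\bm{u}_h,q)-c_h(p_h,q)$ satisfies a uniform discrete inf-sup condition on $V(h)\times Q(h)$; this is the standard Brezzi theory for the perturbed saddle point structure, using that $c_h$ is positive semidefinite (in fact $\gamma_F\ge\tfrac12$ gives control of the normal-jump lifting terms) and that $a_h$ is coercive on the whole of $V(h)$ modulo the $\varepsilon^{1/2}\bm v$ term, which is where the Helmholtz decomposition \eqref{eq:Helmholtz} and the compact embedding $W\subset\subset L^2_\varepsilon(\Omega)^d$ enter.

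The second step is the duality/Aubin--Nitsche argument that converts the indefinite term $-k^2(\varepsilon\bm{u}_h,\bm{v})$ into a genuinely compact perturbation at the discrete level. Assuming $(\bm{u}_h,p_h)$ solves the homogeneous problem, one writes $\bm{u}_h$ via the discrete Helmholtz-type splitting induced by $V_h$, bounds the gradient part using $c_h$ and the $b_h$-equation, and for the solenoidal part introduces the solution $(\bm{z},r)\in V\times Q$ of the \emph{continuous} adjoint Maxwell problem with right-hand side $\varepsilon\bm{u}_h$. Lemma~\ref{lem:regularity} then provides an $H^s$-regularity shift for $(\bm{z},\nabla\times\bm z)$ with $s<\min(\tau_\varepsilon,\tau_\mu)$, and the interpolation error estimates from Section~\ref{sec:interpolations} give $\|\bm{z}-I_h\bm{z}\|_{V(h)}\lesssim h^{s}\|\bm{u}_h\|_{0}$. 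Testing the homogeneous DG system against $I_h\bm z$ and the adjoint consistency of $a_h,b_h$ (again made possible by the lifting-operator reformulation, which lets $a_h,b_h$ act on $V$) yields $\|\varepsilon^{1/2}\bm{u}_h\|_{0}^2\lesssim h^{s}\big(\|\bm{u}_h\|_{V(h)}+\|p_h\|_{Q(h)}\big)\|\bm{u}_h\|_{0}$, i.e. the $L^2$ part of $\bm{u}_h$ is controlled by $h^s$ times the full norm.

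The third step combines the two: the uniform discrete inf-sup bound for the $k=0$ operator applied to the homogeneous equation gives $\|\bm{u}_h\|_{V(h)}+\|p_h\|_{Q(h)}\lesssim \|\varepsilon^{1/2}\bm{u}_h\|_{0}\lesssim h^{s}\big(\|\bm{u}_h\|_{V(h)}+\|p_h\|_{Q(h)}\big)$, which forces $\bm{u}_h=\bm{0}$ and then $p_h=\bm{0}$ once $C h^{s}<1$, i.e. for $h$ small enough. Uniqueness on the finite-dimensional space implies existence for every $\bm j$, and running the same inf-sup bound on the inhomogeneous problem — using $|(\bm j,\bm v)|\le\|\bm j\|_0\|\varepsilon^{-1/2}\|_\infty\|\varepsilon^{1/2}\bm v\|_0\le C\|\bm j\|_0\|\bm v\|_{V(h)}$ — delivers the claimed a priori bound $\|\bm{u}_h\|_{V(h)}+\|p_h\|_{Q(h)}\le C\|\bm j\|_0$ with $C$ independent of $h$.

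I expect the main obstacle to be the adjoint-consistency bookkeeping in the duality step: one must verify that $a_h$ and $b_h$, when one argument is the exact (low-regularity) adjoint solution $\bm z\in V\subset H^s$ and the other is a discrete function, can be handled without a well-defined tangential trace of $\bm z$ on faces — this is exactly what the lifting-operator extension of $a_h,b_h$ to $V\times V$, $V\times Q$ buys us, but the residual/Galerkin-orthogonality identity that pairs the homogeneous discrete equations with $I_h\bm z$ and then with $\bm z$ has to be assembled carefully, tracking the lifting terms $\mathcal{R}_F(\llbracket\cdot\rrbracket_T)$ and $\mathcal{R}_F(\llbracket\cdot\rrbracket_N)$ and using the stability bound \eqref{eq:lifting stability}. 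A secondary subtlety is ensuring the discrete inf-sup condition for the $k=0$ operator is itself uniform in $h$, which relies on the discrete Helmholtz decomposition of $V_h$ and the compactness in \eqref{eq:Helmholtz}; if that has already been proved in Section~\ref{sec:definiteMax} (as the section outline suggests), it can be quoted directly.
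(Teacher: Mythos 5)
Your proposal is essentially correct, but it takes a genuinely different route from the paper. The paper proves the case $k=0$ via the auxiliary saddle-point formulation (Lemma \ref{lem:same solution} together with Lemmas \ref{lem:continuity}--\ref{lem:inf-sup}), and then treats $k\neq 0$ through operator/spectral approximation theory: it introduces the solution operators $T,T_h$ of the $k=0$ problem, proves the \emph{uniform} convergence $\|T-T_h\|_{\mathcal{L}(V_h\to V(h))}\to 0$ (Proposition \ref{pop:uni conv}, itself obtained from the abstract error estimates, the residual bounds and the regularity shift of Lemma \ref{lem:regularity}), and then invokes the resolvent bound of Lemma \ref{lem:Rz} to write $\bm{u}_h=z(z-T_h)^{-1}T_h\bm{j}_h$ with $z=1/k^2$. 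Your plan replaces this by a direct Schatz/Aubin--Nitsche duality argument on the homogeneous discrete system. Both are classical and both ultimately rest on the same two ingredients (the $H^s$ regularity shift and the low-regularity consistency/residual estimates made possible by the lifting-operator form of $a_h,b_h$), so your route is viable: it is more elementary and makes the smallness threshold on $h$ quantitative (of order $h^{s}$), whereas the paper's route localizes all the compactness in one cited result and delivers spectral correctness as a by-product. Note that the adjoint-consistency bookkeeping you flag as the main obstacle is exactly what Proposition \ref{pop:res est} already provides (the forms are symmetric, so the adjoint problem is of the same type), and that the perturbed saddle-point structure with $c_h$ is most cleanly handled by passing to the auxiliary formulation of Lemma \ref{lem:same solution} rather than by Brezzi theory with penalty.

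One loose end: your final sentence derives the a priori bound by estimating only $|(\bm{j},\bm{v})|$, but the $k=0$ inf-sup applied to the inhomogeneous indefinite system yields $\|\bm{u}_h\|_{V(h)}+\|p_h\|_{Q(h)}\leq C\bigl(\|\bm{j}\|_{0}+k^2\|\varepsilon^{1/2}\bm{u}_h\|_{0}\bigr)$, and the second term does not disappear. You must rerun the duality step on the inhomogeneous problem to get $\|\varepsilon^{1/2}\bm{u}_h\|_{0}\leq C\|\bm{j}\|_{0}+Ch^{s}\bigl(\|\bm{u}_h\|_{V(h)}+\|p_h\|_{Q(h)}\bigr)$ and absorb the last term for $h$ small; uniqueness on a finite-dimensional space alone does not give a constant independent of $h$.
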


\begin{theorem} [A priori error estimate]
\label{thm:defEnergyNorm}
	Suppose $k^2$ is not a Maxwell eigenvalue, $\alpha_F\geq
	\frac{1}{2}+2n_K$ and $\gamma_F \geq \frac{1}{2}$. Let
	$(\bm{u},p)$ be the solution of
	\eqref{eq:conForm1}-\eqref{eq:conForm2} with regularity
	\[
		\varepsilon\bm{u},\ \mu^{-1}\nabla\times \bm{u} 
		\in H^s(\mc{T}_h),\ p\in H^{1+s}(\mc{T}_h), \ s\geq 0.
	\]
	Then, for all mesh sizes $h$ small enough, there exists a
	unique solution $(\bm{u}_{h}, p_{h}) \in V_h\times Q_h$ to
	\eqref{eq:MDG1}-\eqref{eq:MDG2} and it satisfies the a
	priori error estimates
	\begin{align*}
		\|\bm{u}-\bm{u}_{h}\|_{V(h)} 
		\leq  & C  \Bigg( \sum_{K\in \mathcal{T}_h} \Big(  
			h_K^{2\min \{s,\ell\}}\|\varepsilon \bm{u}\|_{s,D_K}^2
			+h_K^{2\min \{s,\ell\}}\|\mu^{-1}\nabla\times \bm{u}\|_{s,D_K}^2 \\
			&+ \chi(s)h_K^{2}\|\nabla\times \mu^{-1}\nabla\times \bm{u} \|_{0,D_K}^2
			+ h_K^{2\min \{s,\ell\}}\|p\|_{1+s,D_K}^2 \Big) \Bigg)^{1/2},\\
		\|p-p_{h}\|_{Q(h)}
		\leq & C  \Bigg( \sum_{K\in \mathcal{T}_h} \Big(  
			h_K^{2\min \{s,\ell\}}\|\varepsilon \bm{u}\|_{s,D_K}^2
			+h_K^{2\min \{s,\ell\}}\|\mu^{-1}\nabla\times \bm{u}\|_{s,D_K}^2 \\
		&+ \chi(s)h_K^{2}\|\nabla\times \mu^{-1}\nabla\times \bm{u} \|_{0,D_K}^2
		+ h_K^{2\min \{s,\ell\}}\|p\|_{1+s,D_K}^2 \Big) \Bigg)^{1/2},
	\end{align*}
	where the constant $C>0$ depends on the bounds
	\eqref{eq:coefficients}, wave number $k$ and polynomial
	degree $\ell$, but is independent of the mesh size. Here,
	$\chi(s)=1$ if $s\leq \frac{1}{2}$ and $\chi(s)=0$
	otherwise, and $D_K := \mathrm{int}
	\left(\cup_{\bar{K}^{\prime}\cap \bar{K}\neq
	\emptyset}\bar{K}^{\prime} \right)$. Note that in the
	estimates $D_K$ can be replaced by $K$ for all
	$s>\frac{1}{2}$.
\end{theorem}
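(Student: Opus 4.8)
The plan is to derive the a priori error estimate for Theorem~\ref{thm:defEnergyNorm} from the abstract second Strang-type lemma adapted to the mixed DG setting, combined with the consistency analysis made possible by the lifting operator. First, I would establish \emph{boundedness} of the extended bilinear forms $a_h:V(h)\times V(h)\to\mathbb{R}$, $b_h:V(h)\times Q(h)\to\mathbb{R}$ and $c_h:Q(h)\times Q(h)\to\mathbb{R}$ with respect to the norms $\|\cdot\|_{V(h)}$ and $\|\cdot\|_{Q(h)}$; this is routine using Cauchy--Schwarz together with the stability estimate \eqref{eq:lifting stability} for $\mathcal{R}_F$ and the fact that $\{\!\{\mu^{-1}\nabla_h\times\bm{v}\}\!\}$ on a face can be controlled (via a trace inequality on the finite-element side and the lifting identity on the $V$-side) by $|\bm v|_{V(h)}$. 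The key point enabled by the lifting operator is that these forms make sense for the exact solution $(\bm u,p)$ with only $H^s(\mathcal T_h)$ regularity, $s\ge 0$, so no generalized tangential trace is needed. Next, combining Proposition~\ref{pop:coercivity} (coercivity of $a_h$ on the kernel-type seminorm), the discrete inf-sup condition for $b_h$ (which follows from the conforming inf-sup using that $\nabla_h q=\nabla q$ for conforming test functions and that $\mathcal{R}_F(\llbracket q\rrbracket_N)=0$ on those), and the coercivity of $c_h$ from $\gamma_F\ge\frac12$, I would invoke Theorem~\ref{thm:indefUniqueness} to get existence, uniqueness and a discrete stability bound for $(\bm u_h,p_h)$, and then set up a quasi-optimality estimate: for any $(\bm v_h,q_h)\in V_h\times Q_h$,
\begin{align*}
	\|\bm u-\bm u_h\|_{V(h)}+\|p-p_h\|_{Q(h)}
	\le C\Big(\inf_{\bm v_h\in V_h}\|\bm u-\bm v_h\|_{V(h)}
	+\inf_{q_h\in Q_h}\|p-q_h\|_{Q(h)}+\mathcal{E}_h(\bm u,p)\Big),
\end{align*}
where $\mathcal E_h$ is a consistency (residual) functional measuring the failure of $(\bm u,p)$ to satisfy the discrete equations exactly.

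The second main block is the \emph{consistency error}. Here I would test \eqref{eq:Max-1}--\eqref{eq:Max-2} against discrete functions, integrate by parts elementwise, and use the regularity $\mu^{-1}\nabla\times\bm u\in H^s(\mathcal T_h)$ with $\nabla\times(\mu^{-1}\nabla\times\bm u)\in L^2(\Omega)$ and $p\in H^{1+s}(\mathcal T_h)$ with $\varepsilon\bm u\in H(\mathrm{div}_\varepsilon^0,\Omega)$. The subtlety is that for $s\le\frac12$ the elementwise traces are not in $L^2(F)$, which is exactly where the factor $\chi(s)$ and the enlarged patch $D_K$ enter. I would handle this by using the smoothed/quasi-interpolation operators from Section~\ref{sec:interpolations}: write the face terms in $a_h$ and $b_h$ against $\mathcal R_F$-liftings, insert a smoothed interpolant $\pi_h(\mu^{-1}\nabla\times\bm u)$ (respectively $\pi_h\varepsilon\bm u$) that \emph{is} face-traceable, and bound the difference by the interpolation error on the patch $D_K$; the remaining term involving $\nabla\times(\mu^{-1}\nabla\times\bm u)$ is then treated by an $H^1$-type argument producing the $h_K^2\|\nabla\times\mu^{-1}\nabla\times\bm u\|_{0,D_K}^2$ contribution (active only when $s\le\frac12$). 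This is essentially the key lemma whose proof I expect is deferred to Section~\ref{sec:definiteMax}/Appendix; I would state it as: $\mathcal E_h(\bm u,p)^2\le C\sum_K\big(h_K^{2\min\{s,\ell\}}(\|\varepsilon\bm u\|_{s,D_K}^2+\|\mu^{-1}\nabla\times\bm u\|_{s,D_K}^2+\|p\|_{1+s,D_K}^2)+\chi(s)h_K^2\|\nabla\times\mu^{-1}\nabla\times\bm u\|_{0,D_K}^2\big)$.

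Finally, I would bound the best-approximation terms $\inf_{\bm v_h}\|\bm u-\bm v_h\|_{V(h)}$ and $\inf_{q_h}\|p-q_h\|_{Q(h)}$ by the same right-hand side. Choosing $\bm v_h$ and $q_h$ to be the smoothed interpolants from Section~\ref{sec:interpolations}, the volume part $\|\mu^{-1/2}\nabla_h\times(\bm u-\bm v_h)\|_{0}+\|\varepsilon^{1/2}(\bm u-\bm v_h)\|_0+\|\varepsilon^{1/2}\nabla_h(p-q_h)\|_0$ gives the $h_K^{2\min\{s,\ell\}}$ terms directly by the interpolation estimates, while the lifting contributions $\|\mathcal R_F(\llbracket\bm u-\bm v_h\rrbracket_T)\|_0$ and $\|\mathcal R_F(\llbracket p-q_h\rrbracket_N)\|_0$ are rewritten via \eqref{eq:lifting stability} as scaled jump terms $h_F^{-1/2}\|\llbracket\bm u-\bm v_h\rrbracket_T\|_{0,F}$; since $\llbracket\bm u\rrbracket_T=0$ and $\llbracket p\rrbracket_N=0$ for the exact (conforming) solution, these reduce to $h_F^{-1/2}\|\llbracket\bm v_h-\bm u\rrbracket_T\|_{0,F}$, which are again controlled by the smoothed-interpolation error on $D_K$ (with the $\chi(s)$ correction term when $s\le\frac12$ for the same trace-regularity reason). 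Collecting the quasi-optimality bound, the consistency estimate, and the interpolation estimates yields the two stated inequalities with a single common right-hand side, and the remark that $D_K$ may be replaced by $K$ when $s>\frac12$ follows because in that regime the usual element-wise trace theorem applies and no patch is needed. The hard part will be the consistency lemma for $s\le\frac12$: making precise how the smoothed interpolation lets one integrate by parts against a face term that is classically ill-defined, and tracking that the error lands on the patch $D_K$ with exactly the weight $\chi(s)h_K^2\|\nabla\times\mu^{-1}\nabla\times\bm u\|_{0,D_K}^2$ rather than something worse.
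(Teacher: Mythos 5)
Your overall architecture --- quasi-optimality for the mixed system, a consistency (residual) functional made meaningful by the lifting operator, and best approximation via smoothed interpolation --- is the same as the paper's. However, there are two concrete gaps.

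First, and most importantly, your consistency estimate is missing the device that produces the factor $h_K$ in front of $\|\nabla\times\mu^{-1}\nabla\times\bm{u}\|_{0,D_K}$. After inserting the smoothed interpolant $\wt{\mc{I}}_h^{\mr{c}}\sigma(\bm{u})$ of $\sigma(\bm{u})=\mu^{-1}\nabla\times\bm{u}$ and integrating by parts, the residual contains the term $\left(\nabla\times\sigma(\bm{u})-\nabla\times\wt{\mc{I}}_h^{\mr{c}}\sigma(\bm{u}),\bm{v}\right)$. Since $\nabla\times\sigma(\bm{u})$ is merely in $L^2(\Omega)$, Proposition \ref{pop:Ih} at $s=0$ gives no power of $h$ for this interpolation error, so pairing it with $\|\bm{v}\|_{0,\Omega}$ yields only an $O(1)$ bound and hence no convergence. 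The paper's fix is the conforming averaging operator $\Pi_h^{\mathrm{c}}$ of Lemma \ref{lem:operator Pi}: because the residual vanishes on $V_{h0}^{\mathrm{c}}\times M_h$ (Lemma \ref{lem:consistency Vhc}), one may replace the test function $\bm{v}$ by $\bm{v}^{\perp}=\bm{v}-\Pi_h^{\mathrm{c}}\bm{v}$, and Lemma \ref{lem:operator Pi 2} gives $\|\bm{v}^{\perp}\|_{0,K}\leq Ch_K\bigl(\sum_{F\in\partial K}\|\mathcal{R}_F(\llbracket\bm{v}\rrbracket_{T})\|_{0,\Omega}^2\bigr)^{1/2}$; this is precisely where the $h_K$, and hence the $\chi(s)h_K^{2}\|\nabla\times\mu^{-1}\nabla\times\bm{u}\|_{0,D_K}^{2}$ contribution, comes from. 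Your ``$H^1$-type argument'' does not identify this mechanism, and without it the step fails.

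Second, in the best-approximation bound you propose to convert $\|\mathcal{R}_F(\llbracket\bm{u}-\bm{v}_h\rrbracket_{T})\|_{0,\Omega}$ into $h_F^{-1/2}\|\llbracket\bm{u}-\bm{v}_h\rrbracket_{T}\|_{0,F}$ and control it by interpolation on $D_K$; for $s\leq\frac{1}{2}$ this cannot work, because $\bm{u}$ has no elementwise $L^2$ face trace and the scaled trace inequality is unavailable --- this is the very obstruction the whole paper is designed to avoid. The paper sidesteps it: the smoothed interpolant $\wt{\mc{I}}_{h0}^{\mr{c}}\bm{u}$ lies in $V_{h0}^{\mathrm{c}}\subset H_0(\mathrm{curl},\Omega)$, so $\llbracket\bm{u}-\wt{\mc{I}}_{h0}^{\mr{c}}\bm{u}\rrbracket_{T}=\bm{0}$ and the lifting contributions vanish identically, leaving only volume terms (Theorem \ref{thm:bestAppr}). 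Finally, for $k\neq 0$ your appeal to coercivity is not enough: $a_h-k^2(\varepsilon\,\cdot,\cdot)$ is indefinite, and the paper establishes the discrete inf-sup condition on $\mr{Ker}(B_h)$ (Proposition \ref{pop:inf-sup Ah}) via the uniform convergence $T_h\to T$ and resolvent bounds; the $L^2$-data stability of Theorem \ref{thm:indefUniqueness} alone does not yield quasi-optimality against a residual that is bounded only in the dual norm of $W(h)$.
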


\begin{remark}
	We note that all above results also hold true with the
	choice of N\'{e}d\'{e}lec elements of the second type for
	$V_h$ and a full polynomial space of order $\ell+1$ for
	$Q_h$, see \cite[Section 7.1]{Buffa2006} and \cite[Section
	8.2]{Monk2003} for more details.
\end{remark}

Theorem~\ref{thm:indefUniqueness} will be proved for $k=0$ in
the next section using an auxiliary formulation in the spirit of
\cite{Houston2004}. The case $k\neq 0$ is treated in
Section~\ref{sec:well-posedness}.
Theorem~\ref{thm:defEnergyNorm} will be proven in Section
\ref{sec:definiteMax} for $k=0$ and in Section
\ref{sec:well-posedness} for $k\neq 0$, respectively.


\section{Auxiliary results}
\label{sec:interpolations}


\subsection{Auxiliary mixed formulation}
\label{subs:auxiliary}


The variational problem \eqref{eq:MDG1}-\eqref{eq:MDG2} is a saddle-point problem with penalty, to facilitate its analysis we follow \cite[Section 4 and 5]{Houston2004} and introduce an equivalent auxiliary mixed formulation, that is a saddle-point problem without penalty.
%
To do so, let us introduce the discrete auxiliary space
\[
	M_{h}:= 
	\{ \lambda \in L^{2}(\mathcal{F}_{h})^{d}:\ \lambda_{|F} \in 
	\mb{P}_{\ell}(F)^d, \quad \forall\, F \in \mathcal{F}_{h}\},
\]
with norm 
\[
	\|\lambda\|_{M_{h}}^2:=\sum_{F \in \mathcal{F}_{h}} \|\varepsilon^{\frac{1}{2}}
	\mathcal{R}_F (\lambda)\|_{0,\Omega}^2,
\]
and let $W_h=V_h\times M_{h}$ and $W(h)=V(h)\times
M_{h}$ with semi-norm and norm defined as:
\[
	|(\bm{v},\eta)|_{W(h)}^{2}=|\bm{v}|_{V(h)}^{2}+\|\eta\|_{
	M_{h}}^{2},	\quad \|(\bm{v},\eta)\|_{W(h)}^{2}=\|\bm{v}\|_{
	V(h)}^{2}+\|\eta\|_{M_{h}}^{2}.
\]

We state the auxiliary mixed formulation as follows: Find
$(\bm{u}_{h},\lambda_{h}; p_{h})\in W_h \times Q_{h}$ such
that 
\begin{alignat}{3}
	A_{h}(\bm{u}_{h},\lambda_{h};\bm{v},\eta)
	-k^2( \varepsilon \bm{u}_h,\bm{v})
	+B_{h}(\bm{v},\eta;p_{h}) &=(\bm{j},\bm{v}),
		\quad && \forall\, (\bm{v},\eta)\in W_{h},
		\label{eq:auxiliary1}\\
	B_{h}(\bm{u}_{h},\lambda_{h};q) &= 0,  && \forall\, q \in Q_{h},
		\label{eq:auxiliary2}
\end{alignat}
where 
\begin{align*}
	A_{h}(\bm{u}_h,\lambda_h;\bm{v},\eta)&=a_{h}(\bm{u}_h,\bm{v})
		+\sum_{F \in \mathcal{F}_{h}} \gamma_{F}\left( \varepsilon
		\mathcal{R}_{F}(\lambda_h) , \mathcal{R}_{F}(\eta)\right),\\
	B_{h}(\bm{v},\eta;p_h) &=b_{h}(\bm{v},p_h)-\sum_{F \in \mathcal{
		F}_{h}}	\gamma_{F}  \left(\varepsilon \mathcal{R}_{F}
		(\llbracket p_h \rrbracket_{N}), \mathcal{R}_{F}(\eta) \right).
\end{align*}

\begin{lemma} \label{lem:same solution} 
	The mixed DG formulation \eqref{eq:MDG1}-\eqref{eq:MDG2} is
	equivalent to \eqref{eq:auxiliary1}-\eqref{eq:auxiliary2},
	i.e., if $(\bm{u}_h,p_h)\in V_h \times Q_{h}$ solves
	\eqref{eq:MDG1}-\eqref{eq:MDG2}, then $(\bm{u}_h, \llbracket
	p_h\rrbracket_{N}; p_h)\in W_h \times Q_{h}$ solves
	\eqref{eq:auxiliary1}-\eqref{eq:auxiliary2}. If, on the
	other hand, $(\bm{u}_h, \lambda_h;
	p_h)\in W_h \times Q_{h}$ solves \eqref{eq:auxiliary1}-\eqref{eq:auxiliary2},
	then $(\bm{u}_h,p_h)$ solves \eqref{eq:MDG1}-\eqref{eq:MDG2}
	and $\lambda_h=\llbracket p_h\rrbracket_{N}$. 
\end{lemma}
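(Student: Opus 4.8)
The plan is to verify the equivalence by direct manipulation of the bilinear forms, exploiting the fact that the only term in which the auxiliary variable $\lambda_h$ appears nontrivially is the penalty-like term $\sum_F \gamma_F(\varepsilon\mathcal{R}_F(\lambda_h),\mathcal{R}_F(\eta))$, and that $c_h(p_h,q)$ was constructed precisely so that $c_h(p_h,q)=\sum_F\gamma_F(\varepsilon\mathcal{R}_F(\llbracket p_h\rrbracket_N),\mathcal{R}_F(\llbracket q\rrbracket_N))$. First I would observe that, by definition, $A_h(\bm{u}_h,\lambda_h;\bm{v},\eta)=a_h(\bm{u}_h,\bm{v})+\sum_F\gamma_F(\varepsilon\mathcal{R}_F(\lambda_h),\mathcal{R}_F(\eta))$ and $B_h(\bm{v},\eta;p_h)=b_h(\bm{v},p_h)-\sum_F\gamma_F(\varepsilon\mathcal{R}_F(\llbracket p_h\rrbracket_N),\mathcal{R}_F(\eta))$, so testing \eqref{eq:auxiliary1} with $\eta=0$ recovers exactly \eqref{eq:MDG1}, and testing \eqref{eq:auxiliary2} with any $q$ recovers \eqref{eq:MDG2} rewritten as $b_h(\bm{u}_h,q)=\sum_F\gamma_F(\varepsilon\mathcal{R}_F(\lambda_h),\mathcal{R}_F(\llbracket q\rrbracket_N))$; thus the equivalence hinges entirely on identifying the coupling term, i.e.\ on showing $\lambda_h=\llbracket p_h\rrbracket_N$.

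For the forward direction, I would start from a solution $(\bm{u}_h,p_h)$ of \eqref{eq:MDG1}-\eqref{eq:MDG2}, set $\lambda_h:=\llbracket p_h\rrbracket_N$, and simply substitute: \eqref{eq:MDG1} gives $a_h(\bm{u}_h,\bm{v})-k^2(\varepsilon\bm{u}_h,\bm{v})+b_h(\bm{v},p_h)=(\bm{j},\bm{v})$, and since with $\lambda_h=\llbracket p_h\rrbracket_N$ the two $\gamma_F$-terms in $A_h$ and $B_h$ coincide, we have $A_h(\bm{u}_h,\lambda_h;\bm{v},\eta)+B_h(\bm{v},\eta;p_h)=a_h(\bm{u}_h,\bm{v})+b_h(\bm{v},p_h)$ for every $(\bm{v},\eta)\in W_h$, which yields \eqref{eq:auxiliary1}. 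For \eqref{eq:auxiliary2}, $B_h(\bm{u}_h,\lambda_h;q)=b_h(\bm{u}_h,q)-\sum_F\gamma_F(\varepsilon\mathcal{R}_F(\llbracket p_h\rrbracket_N),\mathcal{R}_F(\llbracket q\rrbracket_N))=b_h(\bm{u}_h,q)-c_h(p_h,q)=0$ by \eqref{eq:MDG2}. Conversely, given a solution $(\bm{u}_h,\lambda_h;p_h)$ of \eqref{eq:auxiliary1}-\eqref{eq:auxiliary2}, testing \eqref{eq:auxiliary1} with $(\bm{v},\bm{0})$ immediately reproduces \eqref{eq:MDG1}; then testing \eqref{eq:auxiliary1} with $(\bm{0},\eta)$ for arbitrary $\eta\in M_h$ gives $\sum_F\gamma_F(\varepsilon\mathcal{R}_F(\lambda_h-\llbracket p_h\rrbracket_N),\mathcal{R}_F(\eta))=0$ for all $\eta\in M_h$.

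The main (and essentially the only) obstacle is concluding $\lambda_h=\llbracket p_h\rrbracket_N$ from this orthogonality relation. Here I would use that $\llbracket p_h\rrbracket_N\in M_h$ (since $p_h\in Q_h$ is piecewise $\mathbb{P}_\ell$, its normal jump on each face lies in $\mathbb{P}_\ell(F)^d$), so $\mu:=\lambda_h-\llbracket p_h\rrbracket_N\in M_h$ is an admissible test function; taking $\eta=\mu$ and using that $\gamma_F>0$ together with the positive-definiteness of $\varepsilon$ gives $\sum_F\gamma_F\|\varepsilon^{1/2}\mathcal{R}_F(\mu)\|_{0,\Omega}^2=0$, hence $\mathcal{R}_F(\mu)=0$ for every $F$. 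Finally, by the lower bound in the stability estimate \eqref{eq:lifting stability} (the local lifting operator is injective on tangential jumps; the analogous bound $C_1 h_F^{-1/2}\|\mu_{|F}\|_{0,F}\le\|\mathcal{R}_F(\mu)\|_{0,\Omega}$ for elements of $M_h$ follows by the same argument as in \cite{Sarmany2010}, or alternatively directly from \eqref{eq:lifting} by testing with a suitable $\bm{v}\in V_h$ supported on $\omega_F$), we get $\mu_{|F}=0$ for all $F$, i.e.\ $\lambda_h=\llbracket p_h\rrbracket_N$. Substituting this back turns \eqref{eq:auxiliary2} into $b_h(\bm{u}_h,q)-c_h(p_h,q)=0$, which is \eqref{eq:MDG2}, completing the equivalence.
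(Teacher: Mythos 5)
Your proposal is correct and follows essentially the same route as the paper: test \eqref{eq:auxiliary1} with $\bm{v}=\bm{0}$ to obtain the orthogonality relation, identify $\lambda_h=\llbracket p_h\rrbracket_N$, and observe that the forward direction is immediate substitution since the $\gamma_F$-terms in $A_h$ and $B_h$ then cancel against $c_h$. In fact you supply a detail the paper leaves implicit, namely why $\mathcal{R}_F(\lambda_h-\llbracket p_h\rrbracket_N)=0$ forces $\lambda_h=\llbracket p_h\rrbracket_N$ (injectivity of the local lifting on $M_h$), which is a worthwhile addition.
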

\begin{proof}
	Suppose $(\bm{u}_h, \lambda_h; p_h)$ solves
	\eqref{eq:auxiliary1}-\eqref{eq:auxiliary2}. By taking test
	function $\bm{v}=0$ in \eqref{eq:auxiliary1}, we have
	\[
		\sum_{F \in \mathcal{F}_{h}}\gamma_{F} (\varepsilon
		\mathcal{R}_{F}(\lambda_{h}) , \mathcal{R}_{F}(\eta)) = 
		\sum_{F \in \mathcal{F}_{h}}\gamma_{F}  (\varepsilon\mathcal{R}_{F} 
		(\llbracket p_{h} \rrbracket_{N}), \mathcal{R}_{F}(\eta)), 
		\quad \forall\, \eta \in M_{h}.
	\]
	Hence, $\lambda_{h}=\llbracket p_{h} \rrbracket_{N}$. This shows that $(u_h,p_h)$ solves
	\eqref{eq:MDG1}-\eqref{eq:MDG2}. The other direction follows immediately by setting $\lambda_h=\llbracket p_{h} \rrbracket_{N}$. 
\end{proof}

Define the kernel of the form $B_{h}(\cdot,\cdot)$ as 
\begin{equation}
	\label{eq:Zh}
	\mr{Ker}(B_{h}):=\{ (\bm{v},\eta)\in W_{h}:\ 
	B_{h}(\bm{v},\eta;q)=0, \ \forall\, q \in Q_{h} \}.
\end{equation}

The following three lemmas form the basis for the proof of Theorem~\ref{thm:indefUniqueness} for $k=0$.

\begin{lemma} [Continuity]
	\label{lem:continuity}
	There exists a constant $C$ independent of the mesh size and
	the coefficients $\mu$ and $\varepsilon$ such that 
	\begin{alignat*}{3}
		|A_{h}(\bm{u},\lambda;\bm{v},\eta)|	
			&\leq C\|(\bm{u},\lambda)\|_{W(h)}\|(\bm{v},\eta)
			\|_{W(h)}, \qquad
			&&\forall\, (\bm{u},\lambda),(\bm{v},\eta) \in W(h), \\
		|B_{h}(\bm{v},\eta;q)| 
			&\leq C \|(\bm{v},\eta)\|_{W(h)} 
			\|q\|_{Q(h)}, &&\forall\, (\bm{v},\eta) \in W(h),\ 
			\forall\, q \in Q(h).
	\end{alignat*}
\end{lemma}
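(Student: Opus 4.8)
The plan is to prove the two continuity bounds by unwinding the definitions of $A_h$ and $B_h$, and estimating each term by Cauchy--Schwarz together with the norms $\|\cdot\|_{W(h)}$ and $\|\cdot\|_{Q(h)}$. The only mildly non-routine point is that the cross terms in $a_h$ (those of the form $(\mathcal{R}(\llbracket\bm{u}\rrbracket_T),\mu^{-1}\nabla_h\times\bm{v})$) couple a sum of local liftings against a volume gradient, so I must control $\|\mathcal{R}(\llbracket\bm v\rrbracket_T)\|_{0,\Omega}$ by $\big(\sum_F\|\mathcal{R}_F(\llbracket\bm v\rrbracket_T)\|_{0,\Omega}^2\big)^{1/2}$; this follows from the finite-overlap property of the supports $\omega_F$ (each element meets at most $n_K=d+1$ faces), exactly as in the proof of Proposition~\ref{pop:coercivity}. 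This is the step I expect to be the main (and only real) obstacle, and it is handled by the same Cauchy--Schwarz-over-faces argument used there.

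First I would expand $A_h(\bm u,\lambda;\bm v,\eta)=a_h(\bm u,\bm v)+\sum_F\gamma_F(\varepsilon\mathcal R_F(\lambda),\mathcal R_F(\eta))$. For the first three terms of $a_h$: the term $(\mu^{-1}\nabla_h\times\bm u,\nabla_h\times\bm v)$ is bounded by $\|\mu^{-1/2}\nabla_h\times\bm u\|_{0,\Omega}\|\mu^{-1/2}\nabla_h\times\bm v\|_{0,\Omega}\le|\bm u|_{V(h)}|\bm v|_{V(h)}$; each cross term, say $(\mathcal R(\llbracket\bm u\rrbracket_T),\mu^{-1}\nabla_h\times\bm v)$, is written as $\sum_F(\mu^{-1/2}\mathcal R_F(\llbracket\bm u\rrbracket_T),\mu^{-1/2}\nabla_h\times\bm v)_{\omega_F}$, bounded by $\sum_F\|\mu^{-1/2}\mathcal R_F(\llbracket\bm u\rrbracket_T)\|_{0,\Omega}\|\mu^{-1/2}\nabla_h\times\bm v\|_{0,\omega_F}$, and then Cauchy--Schwarz in $F$ combined with the finite-overlap bound $\sum_F\|\cdot\|_{0,\omega_F}^2\le n_K\|\cdot\|_{0,\Omega}^2$ gives $\le\sqrt{n_K}\,|\bm u|_{V(h)}|\bm v|_{V(h)}$. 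The penalty term $\sum_F\alpha_F(\mu^{-1}\mathcal R_F(\llbracket\bm u\rrbracket_T),\mathcal R_F(\llbracket\bm v\rrbracket_T))$ is bounded directly by $(\max_F\alpha_F)\,|\bm u|_{V(h)}|\bm v|_{V(h)}$ via Cauchy--Schwarz in $F$; and the $\lambda$-$\eta$ term by $(\max_F\gamma_F)\|\lambda\|_{M_h}\|\eta\|_{M_h}$. Summing and using $|\bm v|_{V(h)},\|\eta\|_{M_h}\le\|(\bm v,\eta)\|_{W(h)}$ yields the first estimate with $C$ depending only on $n_K$ (hence $d$), $\max_F\alpha_F$, $\max_F\gamma_F$; since the bounds $\alpha_F\ge\tfrac12+2n_K$, $\gamma_F\ge\tfrac12$ are the ones used throughout and the precise values enter only through such maxima, I would either absorb them into $C$ or note the dependence.

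For $B_h(\bm v,\eta;q)=b_h(\bm v,q)-\sum_F\gamma_F(\varepsilon\mathcal R_F(\llbracket q\rrbracket_N),\mathcal R_F(\eta))$, I expand $b_h(\bm v,q)=-(\varepsilon\bm v,\nabla_h q)+(\mathcal R(\llbracket q\rrbracket_N),\varepsilon\bm v)$. The first term is $\le\|\varepsilon^{1/2}\bm v\|_{0,\Omega}\|\varepsilon^{1/2}\nabla_h q\|_{0,\Omega}\le\|\bm v\|_{V(h)}\|q\|_{Q(h)}$; the second, written as $\sum_F(\varepsilon^{1/2}\mathcal R_F(\llbracket q\rrbracket_N),\varepsilon^{1/2}\bm v)_{\omega_F}$, is handled by the same finite-overlap Cauchy--Schwarz argument, giving $\le\sqrt{n_K}\,\|\varepsilon^{1/2}\bm v\|_{0,\Omega}\big(\sum_F\|\varepsilon^{1/2}\mathcal R_F(\llbracket q\rrbracket_N)\|_{0,\Omega}^2\big)^{1/2}\le\sqrt{n_K}\,\|\bm v\|_{V(h)}\|q\|_{Q(h)}$. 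The remaining term is $\le(\max_F\gamma_F)\big(\sum_F\|\varepsilon^{1/2}\mathcal R_F(\llbracket q\rrbracket_N)\|_{0,\Omega}^2\big)^{1/2}\|\eta\|_{M_h}\le(\max_F\gamma_F)\|q\|_{Q(h)}\|(\bm v,\eta)\|_{W(h)}$. Collecting the terms gives the second estimate. Throughout, the independence of $C$ from $\mu,\varepsilon$ follows because every term has been arranged into a $\mu^{-1/2}$- or $\varepsilon^{1/2}$-weighted inner product that is then matched against the corresponding weighted piece of $\|\cdot\|_{V(h)}$, $\|\cdot\|_{Q(h)}$ or $\|\cdot\|_{M_h}$, so no coefficient ratios appear; the constant depends only on $d$ (through $n_K$) and, if one does not absorb them, on $\max_F\alpha_F$ and $\max_F\gamma_F$.
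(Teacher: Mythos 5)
Your proposal is correct and follows exactly the route the paper intends: the paper disposes of this lemma with the single remark that it ``follows directly from an application of the Cauchy--Schwarz inequality,'' and your term-by-term expansion, including the finite-overlap Cauchy--Schwarz argument over faces borrowed from the proof of Proposition~\ref{pop:coercivity}, is precisely the standard way to make that one-liner rigorous. Your observation that $C$ carries a (harmless, mesh-independent) dependence on $\max_F\alpha_F$ and $\max_F\gamma_F$ is a fair point of precision that the paper leaves implicit.
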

This lemma follows directly from an application of the
Cauchy-Schwarz inequality.

\begin{lemma} [Ellipticity on the kernel]
	\label{lem:ellipticity}
	For $\alpha_F$ given by Proposition \ref{pop:coercivity} and
	$\gamma_F\geq \frac{1}{2}$, there holds
	\begin{equation} \label{eq:ellipticity}
		A_{h}(\bm{v},\eta;\bm{v},\eta)\geq \kappa_{A}\|(\bm{v},\eta)
		\|_{W(h)}^{2},\quad \forall\, (\bm{v},\eta)\in \mr{Ker}(B_{h}),
	\end{equation}
	where $\kappa_{A}>0$ depends on the coefficients $\mu$ and
	$\varepsilon$ but independent of the mesh size.
\end{lemma}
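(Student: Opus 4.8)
The plan is to prove ellipticity on the kernel by combining the coercivity of $a_h$ from Proposition~\ref{pop:coercivity} with the $\gamma_F$-weighted terms and a kernel-dependent control of the $L^2$-mass of $\bm{v}$. First I would write out
\[
	A_h(\bm{v},\eta;\bm{v},\eta)=a_h(\bm{v},\bm{v})+\sum_{F\in\mathcal{F}_h}\gamma_F\|\varepsilon^{1/2}\mathcal{R}_F(\eta)\|_{0,\Omega}^2\geq \tfrac12|\bm{v}|_{V(h)}^2+\tfrac12\|\eta\|_{M_h}^2,
\]
using Proposition~\ref{pop:coercivity} (valid since $\alpha_F\geq\frac12+2n_K$) and $\gamma_F\geq\frac12$. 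This already gives control of $|\bm{v}|_{V(h)}^2+\|\eta\|_{M_h}^2$, so the only missing piece for the full $W(h)$-norm is the term $\|\varepsilon^{1/2}\bm{v}\|_{0,\Omega}^2$.

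The key step is therefore a discrete Poincaré–Friedrichs-type inequality on the kernel: I would show that there is a constant $C>0$, independent of $h$, such that $\|\varepsilon^{1/2}\bm{v}\|_{0,\Omega}^2\leq C\,|(\bm{v},\eta)|_{W(h)}^2$ for all $(\bm{v},\eta)\in\mathrm{Ker}(B_h)$. The membership $(\bm{v},\eta)\in\mathrm{Ker}(B_h)$ means $B_h(\bm{v},\eta;q)=b_h(\bm{v},q)-\sum_F\gamma_F(\varepsilon\mathcal{R}_F(\llbracket q\rrbracket_N),\mathcal{R}_F(\eta))=0$ for all $q\in Q_h$, i.e. $-(\varepsilon\bm{v},\nabla_h q)+(\mathcal{R}(\llbracket q\rrbracket_N),\varepsilon\bm{v})=\sum_F\gamma_F(\varepsilon\mathcal{R}_F(\llbracket q\rrbracket_N),\mathcal{R}_F(\eta))$; this is a discrete divergence constraint linking $\bm{v}$ and $\eta$. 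The strategy is to use a discrete Helmholtz-type decomposition of $\bm{v}\in V_h$: decompose $\bm{v}$ into a part lying in a conforming space $V_{h0}^{\mathrm{c}}$ (from \eqref{eq:Vhc}) and a ``gradient-like'' complement, then estimate the conforming part via the compact embedding $W\subset\subset L^2_\varepsilon(\Omega)^d$ and the continuous Helmholtz decomposition \eqref{eq:Helmholtz}, while the complement is controlled by the jump/lifting seminorm $|\bm{v}|_{V(h)}$. I expect this is exactly where an auxiliary lemma — likely proven in the Appendix, stating $\|\varepsilon^{1/2}\bm{v}\|_{0,\Omega}\leq C(\|\mu^{-1/2}\nabla_h\times\bm{v}\|_{0,\Omega}+\sum_F\|\mu^{-1/2}\mathcal{R}_F(\llbracket\bm{v}\rrbracket_T)\|_{0,\Omega}+\text{(divergence-type term)})$ — enters, and the kernel relation is used to absorb the divergence-type term into $\|\eta\|_{M_h}$ and $|\bm{v}|_{V(h)}$ via Cauchy–Schwarz and \eqref{eq:lifting stability}.

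Once the Poincaré–Friedrichs bound is in hand, I would finish by interpolation between the two estimates: from $A_h(\bm{v},\eta;\bm{v},\eta)\geq\frac12|(\bm{v},\eta)|_{W(h)}^2$ and $\|\varepsilon^{1/2}\bm{v}\|_{0,\Omega}^2\leq C|(\bm{v},\eta)|_{W(h)}^2$ we get
\[
	\|(\bm{v},\eta)\|_{W(h)}^2=\|\varepsilon^{1/2}\bm{v}\|_{0,\Omega}^2+|(\bm{v},\eta)|_{W(h)}^2\leq (C+1)|(\bm{v},\eta)|_{W(h)}^2\leq 2(C+1)\,A_h(\bm{v},\eta;\bm{v},\eta),
\]
so $\kappa_A=\frac{1}{2(C+1)}$ works, with $\kappa_A$ depending on $\mu,\varepsilon$ through the Poincaré constant and the coefficient bounds \eqref{eq:coefficients} but not on $h$. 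The main obstacle is genuinely the discrete Poincaré–Friedrichs inequality on $\mathrm{Ker}(B_h)$: establishing it uniformly in $h$ requires a careful discrete Helmholtz decomposition compatible with the lifting-based divergence constraint, and exploiting the compactness in \eqref{eq:Helmholtz} to handle the solenoidal component — the jump terms must be shown not to degrade this bound as $h\to0$, which is why the stability \eqref{eq:lifting stability} of $\mathcal{R}_F$ is essential.
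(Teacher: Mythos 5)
Your proposal is correct and follows essentially the same route as the paper: coercivity of $a_h$ plus $\gamma_F\geq\frac12$ gives $A_h(\bm{v},\eta;\bm{v},\eta)\geq\frac12|(\bm{v},\eta)|_{W(h)}^2$, and the missing $\|\varepsilon^{1/2}\bm{v}\|_{0,\Omega}$ term is recovered by a discrete Friedrichs inequality on the kernel (the paper's Appendix~B, which reduces to Lemma~7.6 of Buffa--Perugia). The only minor difference is that the paper gets the Friedrichs inequality more cheaply than you anticipate: testing $B_h$ only with conforming $q\in Q_h\cap Q$ makes all jump terms and the $\eta$-coupling vanish, so $\bm{v}\in K_h^{\perp}$ and the cited result applies directly, with no need to absorb a divergence-type remainder into $\|\eta\|_{M_h}$.
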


\begin{proof}
	Assume $(\bm{v},\eta)\in \mr{Ker}(B_h)$. Recalling the
	definition of $A_{h}$ and using the coercivity of $a_{h}$
	stated in Proposition \ref{pop:coercivity}, there holds 
	\begin{equation}
		\label{eq:ellipticity-01}
		A_{h}(\bm{v},\eta;\bm{v},\eta) 
		\geq \frac{1}{2} |\bm{v}|_{V{(h)}}^{2}
			+ \sum_{F \in \mathcal{F}_{h}}\gamma_{F}
			\|\mathcal{R}_{F}(\eta)\|_{0,\Omega}^{2}
		\geq \frac{1}{2}|(\bm{v},\eta)|_{W(h)}^{2}.
	\end{equation}
	From the discrete Friedrichs inequality in Appendix
	\ref{lem:discrete Friedrichs}, we have
	\[
		\|\varepsilon^{\frac{1}{2}}\bm{v}\|_{0,\Omega}
		\leq c_F 
		|\bm{v}|_{V(h)},
	\]
	which for $\delta>0$ leads to
	\begin{align*}
		|\bm{v}|_{V(h)}^2 = (1-\delta)|\bm{v}|_{V(h)}^2
		+ \delta |\bm{v}|_{V(h)}^2 
		\geq (1-\delta)|\bm{v}|_{V(h)}^2
		+ \frac{\delta}{c_F^2} 
		\|\varepsilon^{\frac{1}{2}}\bm{v}\|_{0,\Omega}^2.
	\end{align*}
	By setting $\delta=\frac{c_F^2}{1+c_F^2}$, we have 
	\[
		|\bm{v}|_{V(h)}^2 \geq \frac{1}{1+c_F^2} \|\bm{v}\|_{V(h)},
	\]
	which, together with \eqref{eq:ellipticity-01}, completes
	the proof with $\kappa_A=\frac{1}{2(1+c_F^2)}$.
\end{proof}

The following stability result follows with similar arguments as
in \cite[Proposition 5.4]{Houston2004} and using the stability
of the lifting operator \eqref{eq:lifting stability}.

\begin{lemma} [Inf-sup condition]
	\label{lem:inf-sup}
	There holds
	\begin{equation} \label{eq:inf-sup}
		\inf_{0\neq q \in Q_{h}} \sup_{0\neq (\bm{v},\eta)\in 
		W_{h}} \frac{B_{h}(\bm{v},\eta;q)}{\|q\|_{Q(h)} 
		\|(\bm{v},\eta)\|_{W(h)}}
		\geq \kappa_{B}>0,
	\end{equation}
	where $\kappa_{B}>0$ depends on the coefficients $\mu$ and
	$\varepsilon$ but is independent of the mesh size.
\end{lemma}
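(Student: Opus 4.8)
The plan is to establish \eqref{eq:inf-sup} constructively: given $0\neq q\in Q_h$, I will exhibit a test pair $(\bm v,\eta)\in W_h$ with $B_h(\bm v,\eta;q)\gtrsim\|q\|_{Q(h)}^2$ and $\|(\bm v,\eta)\|_{W(h)}\lesssim\|q\|_{Q(h)}$. Throughout write $A:=\|\varepsilon^{1/2}\nabla_h q\|_{0,\Omega}$ and $S:=\bigl(\sum_{F\in\mathcal F_h}\|\varepsilon^{1/2}\mathcal R_F(\llbracket q\rrbracket_N)\|_{0,\Omega}^2\bigr)^{1/2}$, so that $\|q\|_{Q(h)}^2=A^2+S^2$. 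The normal–jump part is easy: taking $\eta_0:=-\llbracket q\rrbracket_N\in M_h$, the last term of $B_h(\cdot,\cdot;q)$ equals $\sum_F\gamma_F\|\varepsilon^{1/2}\mathcal R_F(\llbracket q\rrbracket_N)\|_{0,\Omega}^2\geq\tfrac12 S^2$ since $\gamma_F\geq\tfrac12$, while $\|(0,\eta_0)\|_{W(h)}=S$.

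The delicate point is choosing $\bm v\in V_h$ to control $A$. The naive choice $\bm v=-\nabla_h q$ does satisfy $\nabla_h\times\bm v\equiv 0$, but its tangential jumps are $\llbracket\nabla_h q\rrbracket_T=\bm n\times\nabla_\Gamma\llbracket q\rrbracket$, i.e. surface derivatives of the scalar jump of $q$, so by an inverse inequality on faces and \eqref{eq:lifting stability} the seminorm $\sum_F\|\mu^{-1/2}\mathcal R_F(\llbracket\bm v\rrbracket_T)\|_{0,\Omega}^2$ scales like $\sum_F h_F^{-3}\|\llbracket q\rrbracket_N\|_{0,F}^2$, which cannot be bounded by $\|q\|_{Q(h)}^2$. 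I would therefore replace the broken gradient by $\bm v:=-\nabla E_h q$, where $E_h\colon Q_h\to Q_h\cap H^1_0(\Omega)$ is the standard averaging (enriching) operator, which satisfies $\|\nabla_h(q-E_hq)\|_{0,\Omega}^2\lesssim\sum_F h_F^{-1}\|\llbracket q\rrbracket_N\|_{0,F}^2\lesssim S^2$ (the last step by \eqref{eq:lifting stability}, its counterpart for normal jumps, and \eqref{eq:coefficients}). Since $E_hq$ is a continuous piecewise polynomial of degree $\ell$ vanishing on $\Gamma$, on each element $\bm v\in\mathbb P_{\ell-1}(K)^d\subset R_\ell(K)$, hence $\bm v\in V_{h0}^{\mathrm c}\subset V_h$ by \eqref{eq:Vhc}; consequently $\nabla_h\times\bm v=0$ and $\llbracket\bm v\rrbracket_T=0$, so that $|\bm v|_{V(h)}=0$ and
\[
\|\bm v\|_{V(h)}^2=\|\varepsilon^{1/2}\nabla E_hq\|_{0,\Omega}^2\le\bigl(\|\varepsilon^{1/2}\nabla_hq\|_{0,\Omega}+\|\varepsilon^{1/2}\nabla_h(q-E_hq)\|_{0,\Omega}\bigr)^2\lesssim A^2+S^2 .
\]

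It remains to combine the two ingredients. Using the expression of $b_h$ for $V_h\times Q_h$ from Remark~\ref{rmk:well-defined}, $b_h(\bm v,q)=-(\varepsilon\bm v,\nabla_hq)+\int_{\mathcal F_h}\llbrace\varepsilon\bm v\rrbrace\cdot\llbracket q\rrbracket_N\,\mathrm ds$; writing $\nabla E_hq=\nabla_hq-\nabla_h(q-E_hq)$ gives $-(\varepsilon\bm v,\nabla_hq)=A^2-(\varepsilon\nabla_h(q-E_hq),\nabla_hq)\geq A^2-C\,S A$, and a discrete trace inequality together with the finite overlap of the patches $\omega_F$ and \eqref{eq:lifting stability} yields $\bigl|\int_{\mathcal F_h}\llbrace\varepsilon\nabla E_hq\rrbrace\cdot\llbracket q\rrbracket_N\,\mathrm ds\bigr|\lesssim\|\varepsilon^{1/2}\nabla E_hq\|_{0,\Omega}\,S\lesssim(A+S)S$. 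Testing with $(\bm v,\beta\eta_0)$ for a fixed $\beta>0$ then gives $B_h(\bm v,\beta\eta_0;q)\geq A^2-C(SA+S^2)+\tfrac{\beta}{2}S^2$; Young's inequality followed by taking $\beta$ large enough produces $B_h(\bm v,\beta\eta_0;q)\geq\tfrac12(A^2+S^2)=\tfrac12\|q\|_{Q(h)}^2$, whereas $\|(\bm v,\beta\eta_0)\|_{W(h)}^2=\|\bm v\|_{V(h)}^2+\beta^2S^2\lesssim A^2+S^2=\|q\|_{Q(h)}^2$. Dividing yields \eqref{eq:inf-sup} with $\kappa_B$ depending only on $\mu,\varepsilon$, on $\beta$, and on shape regularity, but not on $h$. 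The main obstacle is precisely the second paragraph — bounding the $V(h)$-norm of the test function — which is what forces the detour through the conforming approximant $E_hq$; once a tangential-jump-free test function is available, the remaining estimates are Cauchy–Schwarz, \eqref{eq:lifting stability}, \eqref{eq:coefficients} and Young's inequality, exactly as in \cite[Proposition~5.4]{Houston2004}.
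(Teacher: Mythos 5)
Your proof is correct, and it is more than the paper actually provides: the paper proves Lemma~\ref{lem:inf-sup} only by reference, stating that it ``follows with similar arguments as in \cite[Proposition 5.4]{Houston2004} and using the stability of the lifting operator \eqref{eq:lifting stability}''. Your argument is a sound way to fill in that reference, and you correctly identify the one genuinely delicate point: the naive test function $\bm v=-\nabla_h q$, although elementwise curl-free and lying in $V_h$, has tangential jumps $\bm n^+\times\nabla_\Gamma\llbracket q\rrbracket$ whose lifted seminorm scales like $\sum_F h_F^{-3}\|\llbracket q\rrbracket_N\|_{0,F}^2$ and hence cannot be controlled by $\|q\|_{Q(h)}^2$ (a one-element spike makes the ratio blow up like $h^{-1}$). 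Your repair --- replacing $\nabla_h q$ by $\nabla E_h q$ with $E_h:Q_h\to Q_h\cap H^1_0(\Omega)$ the Oswald/averaging operator, so that the test field lies in $V_{h0}^{\mathrm c}$, is exactly curl-free and jump-free, and differs from $\nabla_h q$ in $L^2$ by at most a multiple of $S$ --- is the right one; note that the vector-valued analogue $\Pi_h^{\mathrm c}(-\nabla_h q)$ from Lemma~\ref{lem:operator Pi} would \emph{not} work here, since its curl is only controlled by $h_F^{-1}S$. Two small housekeeping remarks: (i) you invoke the lower bound of \eqref{eq:lifting stability} for \emph{normal} jumps of $Q_h$-functions, and the Karakashian--Pascal-type estimate $\|\nabla_h(q-E_hq)\|_{0,\Omega}^2\lesssim\sum_F h_F^{-1}\|\llbracket q\rrbracket_N\|_{0,F}^2$; both are standard but neither is stated in the paper, so they should be cited explicitly; (ii) the face integral $\int_{\mathcal F_h}\llbrace\varepsilon\bm v\rrbrace\cdot\llbracket q\rrbracket_N\,\mathrm ds$ can be bounded without any discrete trace inequality by rewriting it as $(\mathcal R(\llbracket q\rrbracket_N),\varepsilon\bm v)$ via \eqref{eq:lifting} (legitimate since $\bm v\in V_h$) and using the finite overlap of the supports $\omega_F$, which is slightly cleaner and closer to the paper's lifting-based philosophy.
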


Now, we are ready to prove the well-posedness of
\eqref{eq:MDG1}-\eqref{eq:MDG2} for $k=0$.

\begin{proof}
	[Proof of Theorem \ref{thm:indefUniqueness} for $k=0$] 
	From the classical theory of mixed FEM \cite[see,
	e.g.,][]{Boffi2013}, Lemma \ref{lem:continuity},
	\ref{lem:ellipticity} and \ref{lem:inf-sup} imply that the
	auxiliary formulation
	\eqref{eq:auxiliary1}-\eqref{eq:auxiliary2} with $k=0$ has a
	unique solution $(\bm{u}_h, \lambda_h; p_h)\in W_h \times
	Q_h$ and satisfies 
	\begin{equation}
		\label{eq:stability}
		\|(\bm{u}_{h},\lambda_{h})\|_{W(h)}+\|p_h\|_{Q(h)}
		\leq C \|\bm{j}\|_{0},
	\end{equation}
	where $C>0$ is independent of the mesh size. From Lemma
	\ref{lem:same solution}, $(\bm{u}_h, p_h)\in V_h\times Q_h$
	also solves \eqref{eq:MDG1}-\eqref{eq:MDG2} and the
	uniqueness of \eqref{eq:MDG1}-\eqref{eq:MDG2} follows from
	the a priori estimate \eqref{eq:stability}.
\end{proof}

\subsection{Conforming approximation}
In the error analysis, we shall use the conforming projection
$\Pi_h^{\mathrm{c}}$, introduced in \cite[Proposition
4.5]{Houston2005}, which states that the approximation of a
discontinuous function in $V_h$ by a $H(\mathrm{curl})$
averaging operator can be quantified in terms of certain jumps.
The following lemma is actually a byproduct of the proof of
\cite[Proposition 4.5]{Houston2005}, see
\cite[Appendix]{Houston2005} for more details.

\begin{lemma} [Conforming approximation]
	\label{lem:operator Pi}
	There exists an operator $\Pi_h^{\mathrm{c}}: V_{h}
	\rightarrow V_{h0}^{\mathrm{c}}$ such that for all
	$\bm{v}\in V_{h}$
	\begin{align*}
		h_K^{-2}\|\bm{v}-\Pi_h^{\mathrm{c}}\bm{v}\|_{0,K}^{2}
		+ \|\nabla\times (\bm{v}-\Pi_h^{\mathrm{c}}\bm{v})\|_{0,K}^{2}
		\leq C \sum_{F\in \partial K}\int_{F} h_F^{-1}
		|\llbracket \bm{v} \rrbracket_{T}|^{2} \,\mathrm{d}s,
	\end{align*}
	Here, the constant $C>0$ depends on the shape-regularity of
	the mesh and the polynomial degree $\ell$, but not on the
	mesh size.
\end{lemma}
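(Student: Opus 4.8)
The plan is to build the operator $\Pi_h^{\mathrm{c}}$ by a standard averaging (Oswald-type) procedure adapted to the Nédélec degrees of freedom, and then to quantify the correction locally in terms of the tangential jumps. First I would recall that the canonical degrees of freedom for $R_\ell(K)$ are associated to edges, faces and the interior of $K$; a function $\bm{v}\in V_h$ is discontinuous, so at each shared edge/face the several elementwise degrees of freedom need not agree. I define $\Pi_h^{\mathrm{c}}\bm{v}\in V_{h0}^{\mathrm{c}}$ by assigning to each interior geometric entity the arithmetic average of the values of the degrees of freedom taken from the adjacent elements, and by assigning zero to all degrees of freedom on boundary faces and boundary edges, which guarantees $\Pi_h^{\mathrm{c}}\bm{v}\in V_{h0}^{\mathrm{c}}\subset H_0(\mathrm{curl},\Omega)$. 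On a single element $K$, $\bm{v}-\Pi_h^{\mathrm{c}}\bm{v}$ is a polynomial whose degrees of freedom are precisely the differences between $\bm{v}|_K$ and the averaged values, i.e.\ sums of jumps of (moments of) the tangential component across the faces of $K$ and across the faces of the neighbours sharing an edge with $K$.

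The second step is the local quantitative estimate. Using the equivalence of norms on the finite-dimensional space $R_\ell(K)$ together with a scaling (Piola/affine) argument, I would bound $\|\bm{v}-\Pi_h^{\mathrm{c}}\bm{v}\|_{0,K}^2$ by $h_K^{d}$ times the sum of squares of the nodal differences, and similarly control $\|\nabla\times(\bm{v}-\Pi_h^{\mathrm{c}}\bm{v})\|_{0,K}^2$ by $h_K^{d-2}$ times the same quantity (the two extra inverse powers of $h_K$ coming from differentiation). Each nodal difference is a functional of a tangential jump $\llbracket\bm{v}\rrbracket_T$ on some face $F\in\partial K$ (or on a face of a neighbouring element in the edge-sharing case), and by another scaling argument a single such functional is bounded by $h_F^{(d-2)/2}\|\llbracket\bm{v}\rrbracket_T\|_{0,F}$ up to a shape-regularity constant. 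Squaring, summing, and using shape-regularity to convert all $h_{K'}$ and $h_F$ occurring for neighbours into $h_K$ (equivalently $h_F$), the powers of $h$ match up to give exactly $\sum_{F\in\partial K}h_F^{-1}\|\llbracket\bm{v}\rrbracket_T\|_{0,F}^2$ on the right-hand side, with the two asserted scalings $h_K^{-2}\|\cdot\|_{0,K}^2$ and $\|\nabla\times(\cdot)\|_{0,K}^2$ on the left.

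The one subtlety — and the main obstacle — is bookkeeping the \emph{locality} of the jumps: a Nédélec degree of freedom associated to an edge $e$ of $K$ involves all elements in the edge patch of $e$, not just the face-neighbours of $K$, so the naive expression involves jumps on faces that are not in $\partial K$. The fix is the standard one: telescoping the average of values around the edge patch expresses the relevant difference as a sum of \emph{consecutive} jumps across the faces meeting at $e$, each of which is a face of some element adjacent to $K$; combined with the trace/inverse inequality $h_F^{-1/2}\|\llbracket\bm{v}\rrbracket_T\|_{0,F}\lesssim$ (comparable quantity on a neighbouring face, by shape-regularity and finite overlap), one absorbs these neighbour-face contributions into $\sum_{F\in\partial K}$ at the cost of a larger constant depending only on shape-regularity and $\ell$. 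Since this bound is already stated in \cite[Proposition 4.5]{Houston2005} and its appendix as the origin of the present lemma, I would simply invoke that construction and its proof, remarking that the stated inequality is the elementwise estimate obtained there \emph{before} summation over $K\in\mathcal{T}_h$, and that the boundary-face degrees of freedom being set to zero is what yields the range $V_{h0}^{\mathrm{c}}$ rather than $V_h^{\mathrm{c}}$.
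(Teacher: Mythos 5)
Your overall route is the same as the paper's: the paper gives no self-contained argument and simply attributes the lemma to the averaging construction in the appendix of Houston et al.\ (2005, Proposition 4.5), which is exactly the Oswald-type averaging of N\'ed\'elec degrees of freedom (with boundary degrees of freedom set to zero) that you describe, followed by norm equivalence and scaling on $R_\ell(K)$. Your sketch of that construction and of the $h$-power bookkeeping is sound.

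The one step in your self-contained justification that would fail is the ``fix'' for locality. A degree of freedom attached to an edge $e$ of $K$ is changed by averaging over the whole edge patch of $e$, and the telescoping you describe expresses that change through tangential jumps on faces meeting at $e$ that are \emph{not} faces of $K$. You then claim these can be absorbed into $\sum_{F\in\partial K}$ via a trace/inverse inequality comparing $h_{F'}^{-1/2}\|\llbracket \bm{v}\rrbracket_T\|_{0,F'}$ with the analogous quantity on a neighbouring face. No such inequality exists: jumps on distinct faces are independent data (take $\bm{v}$ tangentially continuous across all of $\partial K$ but jumping across a patch face touching $K$ only along an edge; then $\Pi_h^{\mathrm{c}}\bm{v}\neq\bm{v}$ on $K$ while your proposed right-hand side vanishes). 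The honest local estimate produced by the averaging construction therefore has the sum running over all faces of the element patch around $K$ (as in the cited appendix), and the form with $\sum_{F\in\partial K}$ is recovered only after summation over $K\in\mathcal{T}_h$ using shape-regularity and finite overlap --- which is also the only form the present paper ever uses (e.g.\ in the proof of Proposition \ref{pop:res est}). So either state the local bound over the patch faces, or present the estimate in its summed, global form; as written, the purely elementwise inequality over $\partial K$ cannot be derived by your argument.
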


By using the stability of the lifting operator \eqref{eq:lifting
stability}, Lemma \ref{lem:operator Pi} immediately implies the
following approximation and stability result.
\begin{lemma}
	\label{lem:operator Pi 2}
	The projection $\Pi_h^{\mathrm{c}}$ from Lemma
	\ref{lem:operator Pi} satisfies for all $\bm{v}\in V_{h}$
	\begin{align*}
		h_K^{-2}\|\bm{v}-\Pi_h^{\mathrm{c}}\bm{v}\|_{0,K}^{2}
		+ \|\nabla\times (\bm{v}-\Pi_h^{\mathrm{c}}\bm{v})\|_{0,K}^{2}
		\leq C \sum_{F\in \partial K} \|\mathcal{R}_F(
			\llbracket \bm{v}\rrbracket_{T})\|_{0,\Omega}^2.
	\end{align*}
	Here, the constant $C>0$ depends on the shape-regularity of
	the mesh and the polynomial degree $\ell$, but not on the
	mesh size.
\end{lemma}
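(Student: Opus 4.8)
The plan is to derive this lemma directly from Lemma~\ref{lem:operator Pi} by absorbing the scaled face integrals $\sum_{F\in\partial K} h_F^{-1}\|\llbracket \bm{v}\rrbracket_T\|_{0,F}^2$ into the lifting norm $\sum_{F\in\partial K}\|\mathcal{R}_F(\llbracket \bm{v}\rrbracket_T)\|_{0,\Omega}^2$. The key tool is the lower bound in the stability estimate \eqref{eq:lifting stability}, namely $C_1 h_F^{-1/2}\|\llbracket \bm{v}\rrbracket_T\|_{0,F}\leq \|\mathcal{R}_F(\llbracket \bm{v}\rrbracket_T)\|_{0,\Omega}$, which upon squaring gives exactly $h_F^{-1}\|\llbracket \bm{v}\rrbracket_T\|_{0,F}^2\leq C_1^{-2}\|\mathcal{R}_F(\llbracket \bm{v}\rrbracket_T)\|_{0,\Omega}^2$ for each face $F$.

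First I would start from the conclusion of Lemma~\ref{lem:operator Pi}, written as
\[
	h_K^{-2}\|\bm{v}-\Pi_h^{\mathrm{c}}\bm{v}\|_{0,K}^{2}
	+ \|\nabla\times (\bm{v}-\Pi_h^{\mathrm{c}}\bm{v})\|_{0,K}^{2}
	\leq C \sum_{F\in \partial K} h_F^{-1}\|\llbracket \bm{v}\rrbracket_T\|_{0,F}^2 .
\]
Then for each face $F\in\partial K$ I would invoke the left inequality of \eqref{eq:lifting stability} to replace the summand $h_F^{-1}\|\llbracket \bm{v}\rrbracket_T\|_{0,F}^2$ by $C_1^{-2}\|\mathcal{R}_F(\llbracket \bm{v}\rrbracket_T)\|_{0,\Omega}^2$. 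Summing over the $n_K = d+1$ faces of $K$ and relabelling the constant (it depends only on shape-regularity and $\ell$ through the constant in Lemma~\ref{lem:operator Pi} and through $C_1$ in \eqref{eq:lifting stability}, both of which have the stated dependence) yields the claimed bound. This is essentially a one-line substitution, so there is no genuine obstacle; the only point requiring a word of care is that \eqref{eq:lifting stability} is stated for $\bm{v}\in V_h$, which is exactly the hypothesis here, so it applies verbatim, and that the right-hand side lifting norm is over all of $\Omega$ but, as noted after \eqref{eq:lifting}, $\mathcal{R}_F(\llbracket \bm{v}\rrbracket_T)$ is supported on the elements adjacent to $F$, so no extra contributions appear.

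I would close by remarking that the constant in the resulting estimate inherits exactly the dependence asserted in the statement, since both Lemma~\ref{lem:operator Pi} and \eqref{eq:lifting stability} have constants depending only on shape-regularity and the polynomial degree and not on the mesh size. Hence no mesh-size dependence is introduced.
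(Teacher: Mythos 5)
Your argument is correct and is exactly the paper's route: the paper derives Lemma~\ref{lem:operator Pi 2} from Lemma~\ref{lem:operator Pi} by applying the lower bound in the lifting stability estimate \eqref{eq:lifting stability} to each face term, just as you do. Nothing is missing.
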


\subsection{Smoothed interpolation}

The idea of combining the canonical finite element interpolation
operators with some mollification technique for low regularity
functions has been introduced in many papers, e.g., by
\cite{Schoberl2001, Schoberl2008}, \cite{Arnold2006},
\cite{Christiansen2008}, \cite{Falk2014} and \cite{Ern2016}. In
this section, we combine the shrinking-based mollification in
\cite[Section 3]{Ern2016} with canonical finite element
interpolations to prove the convergence of the DG approximation
to solutions of the Maxwell equations with low regularity
requirement.

To prove the local approximation properties, stated in
Proposition \ref{pop:Ih} and \ref{pop:Ih0} below, 
we will employ a family of smoothing
operators developed in \cite[Section 6.1]{Ern2016}.


Following \cite[]{Christiansen2008, Ern2016}, we define for
$\varrho \in (0,1)$ 
\begin{equation}
	\label{eq:delta def}
	\delta(\bm{x}):=\varrho g_h(\bm{x}), \quad \bm{x}\in \Omega,
\end{equation}
where $g_h(x)\in C^{0,1}(\bar{\Omega})$ is a mesh-size function
such that there are constants $c^{\prime},c^{\prime\prime}>0$
satisfying
\begin{equation}
	\label{eq:gh}
	c^{\prime}h_K\leq g_h(\bm{x})\leq c^{\prime\prime}h_K,
	\quad \forall \bm{x}\in K.
\end{equation}
Let $\mathcal{K}_{\delta}(\mathcal{K}_{\delta}^{\mathrm{g}},
\mathcal{K}_{\delta}^{\mathrm{c}},
\mathcal{K}_{\delta}^{\mathrm{d}},
\mathcal{K}_{\delta}^{\mathrm{b}}):L^1(\Omega)\to
C^1(\overline{\Omega})$ be the families of mollification
operators introduced in \cite[Section 3.2 and 5.2]{Ern2016}. Let
$\mathcal{I}_{h}(\mc{I}_h^{\mr{g}}, \mc{I}_h^{\mr{c}},
\mc{I}_h^{\mr{d}}, \mc{I}_h^{\mr{b}})$ be the canonical finite
element interpolation operators, i.e.,
$\mathcal{I}_{h}^{\mathrm{g}}$ the Lagrange interpolation,
$\mathcal{I}_{h}^{\mathrm{c}}$ the standard N\'{e}d\'{e}lec
interpolation of the first kind \cite[see][Section 5.5]{Monk2003},
$\mathcal{I}_{h}^{\mathrm{d}}$ the divergence conforming
interpolation \cite[see][Section 5.4]{Monk2003} and
$\mathcal{I}_{h}^{\mathrm{b}}$ the $L^2$ projection
\cite[see][Section 5.7]{Monk2003}, which enjoy a commuting
diagram property \cite[see][(5.59)]{Monk2003}.

Combining the mollification operators $\mc{K}_{\delta}$ with the
canonical interpolation operators, we obtain the smoothed
interpolation operators
\begin{equation}
	\label{eq:Ih}
	\widetilde{\mathcal{I}}_h:=\mathcal{I}_h \mathcal{K}_\delta.
\end{equation}
We note that the canonical interpolation operators require
sufficient smoothness while the smoothed operators requires
merely $L^1$-integrability. From the definition of the smoothed
interpolation and commuting diagrams
\cite[see][(5.59)]{Monk2003} and \cite[(3.7)]{Ern2016}, we
deduce the following lemma. 

\begin{lemma} [Commuting properties]
	\label{lem:diagram}
	Let $\wt{\mc{I}}_{h}$ be defined as in \eqref{eq:Ih}. There
	hold:
	\begin{enumerate}
		\item[$(\mr{i})$] $\nabla\times \wt{\mc{I}}_{h}^{\mr{c}} \bm{v} = \wt{\mc{I}}_{h}^{\mr{d}}\nabla\times  \bm{v}$, for all $\bm{v}\in H(\mathrm{curl},\Omega)$;
		\item[$(\mr{ii})$] $\nabla\cdot \wt{\mc{I}}_{h}^{\mr{d}} \bm{v} = \wt{\mc{I}}_{h}^{\mr{b}}\nabla\cdot  \bm{v}$, for all $\bm{v}\in H(\mathrm{div},\Omega)$.
	\end{enumerate}
\end{lemma}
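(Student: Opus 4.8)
The plan is to prove the two commuting relations by "unfolding" the definition $\wt{\mc{I}}_h = \mc{I}_h\,\mc{K}_\delta$ and applying two commuting diagrams in sequence: the one for the continuous mollification operators from \cite[(3.7)]{Ern2016} and the classical one for the canonical finite element interpolation operators from \cite[(5.59)]{Monk2003}. Concretely, for part $(\mr{i})$, given $\bm{v}\in H(\mathrm{curl},\Omega)$, I would write
\begin{align*}
	\nabla\times \wt{\mc{I}}_h^{\mr{c}}\bm{v}
	= \nabla\times \mc{I}_h^{\mr{c}}(\mc{K}_\delta^{\mr{c}}\bm{v})
	= \mc{I}_h^{\mr{d}}\big(\nabla\times (\mc{K}_\delta^{\mr{c}}\bm{v})\big)
	= \mc{I}_h^{\mr{d}}\big(\mc{K}_\delta^{\mr{d}}(\nabla\times\bm{v})\big)
	= \wt{\mc{I}}_h^{\mr{d}}(\nabla\times\bm{v}),
\end{align*}
where the second equality is the classical commuting diagram property for $\mc{I}_h^{\mr{c}},\mc{I}_h^{\mr{d}}$ (applicable because $\mc{K}_\delta^{\mr{c}}\bm{v}\in C^1(\overline\Omega)\subset H(\mathrm{curl},\Omega)$ is smooth enough for the canonical N\'ed\'elec interpolant to be well-defined), and the third equality is the commuting property of the mollification family, $\nabla\times\mc{K}_\delta^{\mr{c}} = \mc{K}_\delta^{\mr{d}}\nabla\times$ on $H(\mathrm{curl},\Omega)$, from \cite[Section 3.2]{Ern2016}. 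Part $(\mr{ii})$ is entirely analogous: for $\bm{v}\in H(\mathrm{div},\Omega)$ one chains $\nabla\cdot\mc{I}_h^{\mr{d}} = \mc{I}_h^{\mr{b}}\nabla\cdot$ (on $C^1$ fields) with $\nabla\cdot\mc{K}_\delta^{\mr{d}} = \mc{K}_\delta^{\mr{b}}\nabla\cdot$ to obtain $\nabla\cdot\wt{\mc{I}}_h^{\mr{d}}\bm{v} = \wt{\mc{I}}_h^{\mr{b}}\nabla\cdot\bm{v}$.

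The one genuinely non-routine point to be careful about is verifying that each operator in the chain is applied to an argument in its natural domain of definition. The canonical interpolants $\mc{I}_h^{\mr{c}},\mc{I}_h^{\mr{d}},\mc{I}_h^{\mr{b}}$ require pointwise moments/traces, so they are not defined on all of $L^2$; but here they are only ever applied to $\mc{K}_\delta(\cdot)\in C^1(\overline\Omega)$, which is smooth enough, so the classical commuting diagram of \cite[(5.59)]{Monk2003} applies verbatim. Dually, the mollification commuting identities of \cite{Ern2016} hold for $L^1$ (indeed $L^2$) fields in $H(\mathrm{curl})$ resp. $H(\mathrm{div})$, so the left-most applications are justified for the stated hypotheses $\bm{v}\in H(\mathrm{curl},\Omega)$ resp. $\bm{v}\in H(\mathrm{div},\Omega)$. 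There is no hard analytic obstacle here; the substance of the lemma is simply that the composition of two commuting squares is again a commuting square, and the proof amounts to writing the two four-term identity chains above. This is why the excerpt states the lemma is "deduced" directly from the two cited commuting diagrams, and a full proof would occupy at most a few lines.
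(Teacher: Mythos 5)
Your proof is correct and follows exactly the route the paper intends: the paper itself states that the lemma is deduced from the commuting diagram \cite[(5.59)]{Monk2003} for the canonical interpolants combined with the commuting property \cite[(3.7)]{Ern2016} of the mollification operators, which is precisely your two-square composition. Your added remark on checking that each operator is applied within its domain (the mollified field being $C^1$ so that the canonical interpolant is well-defined) is the only nontrivial point and is handled correctly.
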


We finish this section by a family of approximation results.
Since the proof is technical, especially some new local
properties of $\mathcal{K}_\delta$ are needed, we postpone it to
Appendix A.

\begin{proposition} [Local approximation]
	\label{pop:Ih}
	Let $\widetilde{\mathcal{I}}_h$ be defined as in
	\eqref{eq:Ih} and $s\in [0,\frac{1}{2})$. There exists a
	constant $c>0$ independent of the mesh size such that for
	all $\bm{v}\in H(\mr{curl}, \Omega)\cap
	H^s(\mathcal{T}_h)^d$
	\[
		\|\bm{v}-\widetilde{\mathcal{I}}_h^{\mathrm{c}}\bm{v}\|_{0,K} 
		\leq c \left( h_K^{s}\|\bm{v}\|_{s,D_K} 
		+h_K\|\nabla \times \bm{v}\|_{0,D_K} \right),
	\]
	where $D_K$ is the macro element defined in Theorem
	\ref{thm:defEnergyNorm}. Similarly, there hold
	\begin{alignat*}{3}
		\|\bm{v}-\widetilde{\mathcal{I}}_h^{\mathrm{d}}\bm{v}\|_{0,K} 
		&\leq c h_K^{s}\|\bm{v}\|_{s,D_K}, \qquad && \forall \bm{v}\in H(\mathrm{div},\Omega)\cap H^s(\mathcal{T}_h)^d,\\
		\|v-\widetilde{\mathcal{I}}_h^{\mathrm{b}}v\|_{0,K} 
		&\leq c	h_K^{s}\|v\|_{s,D_K}, && \forall v\in 
		H^s(\mathcal{T}_h).
	\end{alignat*}
\end{proposition}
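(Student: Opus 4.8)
The strategy is to estimate $\bm{v}-\widetilde{\mathcal{I}}_h^{\mr{c}}\bm{v}$ by inserting the mollification as an intermediate term:
\[
	\|\bm{v}-\widetilde{\mathcal{I}}_h^{\mr{c}}\bm{v}\|_{0,K}
	\leq \|\bm{v}-\mathcal{K}_\delta^{\mr{c}}\bm{v}\|_{0,K}
	+\|\mathcal{K}_\delta^{\mr{c}}\bm{v}-\mathcal{I}_h^{\mr{c}}\mathcal{K}_\delta^{\mr{c}}\bm{v}\|_{0,K}.
\]
For the first term I would invoke the local approximation property of the shrinking-based mollifier from \cite[Section 3]{Ern2016}: on the macro-patch $D_K$ one has $\|\bm{v}-\mathcal{K}_\delta^{\mr{c}}\bm{v}\|_{0,K}\lesssim h_K^s\|\bm{v}\|_{s,D_K}$ for $s\in[0,1]$, using that $\delta\sim\varrho h_K$ by \eqref{eq:gh} and that $\bm{v}\in H^s(\mathcal{T}_h)^d$ is piecewise smooth on the patch. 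For the second term, since $\mathcal{K}_\delta^{\mr{c}}\bm{v}\in C^1(\overline{\Omega})$, the canonical N\'ed\'elec interpolant $\mathcal{I}_h^{\mr{c}}$ is well-defined and the standard interpolation estimate on $K$ gives $\lesssim h_K\|\nabla\times\mathcal{K}_\delta^{\mr{c}}\bm{v}\|_{0,K}+h_K\,|\mathcal{K}_\delta^{\mr{c}}\bm{v}|_{1,K}$; a more careful argument (stable local interpolation plus the first-term bound) lets one trade the full $H^1$-seminorm for $h_K^{s-1}\|\bm{v}\|_{s,D_K}$ plus the curl. Here the commuting property $\nabla\times\mathcal{K}_\delta^{\mr{c}}=\mathcal{K}_\delta^{\mr{d}}\nabla\times$ from \cite[(3.7)]{Ern2016} together with the $L^2$-stability of $\mathcal{K}_\delta^{\mr{d}}$ yields $\|\nabla\times\mathcal{K}_\delta^{\mr{c}}\bm{v}\|_{0,K}\lesssim\|\nabla\times\bm{v}\|_{0,D_K}$, which is exactly the second term in the claimed bound.

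The arguments for $\widetilde{\mathcal{I}}_h^{\mr{d}}$ and $\widetilde{\mathcal{I}}_h^{\mr{b}}$ are analogous but simpler, since there is no curl term to control: split via $\mathcal{K}_\delta^{\mr{d}}\bm{v}$ (resp.\ $\mathcal{K}_\delta^{\mr{b}}v$), bound the mollification error by $h_K^s\|\bm{v}\|_{s,D_K}$, and bound the interpolation of the smooth remainder by $h_K\,|\mathcal{K}_\delta^{\mr{d}}\bm{v}|_{1,K}$, which is absorbed into $h_K^s\|\bm{v}\|_{s,D_K}$ using an inverse-type estimate for the mollifier (the $C^1$-norm of $\mathcal{K}_\delta\bm{v}$ on $K$ is controlled by $h_K^{-1}$ times lower-order patch norms). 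For the $L^2$-projection $\mathcal{I}_h^{\mr{b}}$ one can alternatively bound $\|v-\widetilde{\mathcal{I}}_h^{\mr{b}}v\|_{0,K}$ even more directly, but it is cleanest to keep the same template throughout.

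The main obstacle is the precise bookkeeping in the mollification step: the operator $\mathcal{K}_\delta$ is defined globally via a shrinking map, so its local error and stability on a single element $K$ depend on the values of $\bm{v}$ on a neighborhood of $K$, namely $D_K$, and one must verify that the shrinking parameter $\varrho$ can be chosen small enough (depending only on shape-regularity) that the support enlargement stays within $D_K$. This is where the "new local properties of $\mathcal{K}_\delta$" mentioned before the statement are needed, and it is why the proof is deferred to Appendix A. Establishing the local estimates $\|\bm{v}-\mathcal{K}_\delta\bm{v}\|_{0,K}\lesssim h_K^s\|\bm{v}\|_{s,D_K}$ and $|\mathcal{K}_\delta\bm{v}|_{1,K}\lesssim h_K^{s-1}\|\bm{v}\|_{s,D_K}$ for piecewise-$H^s$ functions — rather than the global bounds available in \cite{Ern2016} — is the technical heart of the argument; once these are in hand, combining them with the standard Bramble–Hilbert interpolation estimates and the commuting diagram gives the result.
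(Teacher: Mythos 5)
Your proposal follows essentially the same route as the paper's Appendix A: split the error via the mollified function $\mathcal{K}_\delta^{\mathrm{c}}\bm{v}$, bound the mollification error by the local convergence property with $\delta\sim h_K$, bound the interpolation of the smoothed function by the canonical N\'ed\'elec estimate, and absorb the resulting $H^1$-seminorm via a local inverse estimate $|\mathcal{K}_\delta\bm{v}|_{1,K}\lesssim \delta^{s-1}\|\bm{v}\|_{s,D_K}$ together with the commuting diagram for the curl term. You also correctly identify the localization of the Ern--Guermond stability, convergence and inverse estimates to the patch $D_K$ as the technical heart, which is exactly what the paper establishes in Propositions \ref{pop:mol-stab}--\ref{pop:mol-inverse}.
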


\subsection{Smoothed interpolation with boundary conditions}

Similar to last section, we establish the approximation
properties of the smoothed interpolation with boundary
restriction, which will be used to prove the best approximation
given in Theorem \ref{thm:bestAppr}. 

Let $\mathcal{K}_{\delta, 0}(\mathcal{K}_{\delta, 0}^{\mathrm{g}},
\mathcal{K}_{\delta, 0}^{\mathrm{c}},
\mathcal{K}_{\delta, 0}^{\mathrm{d}},
\mathcal{K}_{\delta, 0}^{\mathrm{b}}):L^1(\Omega)\to
C_0^1(\overline{\Omega})$ be the families of mollification
operators introduced in \cite[Section 4.2]{Ern2016}.
Then the smoothed interpolations
\begin{equation}
	\label{eq:Ih0}
	\widetilde{\mathcal{I}}_{h0}:=
	\mathcal{I}_h \mathcal{K}_{\delta,0}
\end{equation}
satisfy the following commuting properties.

\begin{lemma} [Commuting properties]
	\label{lem:diagram 2}
	Let $\wt{\mc{I}}_{h0}$ be defined as in \eqref{eq:Ih0}. There
	hold:
	\begin{enumerate}
		\item[$(\mr{i})$] $\nabla\times \wt{\mc{I}}_{h0}^{\mr{c}} \bm{v} = \wt{\mc{I}}_{h0}^{\mr{d}}\nabla\times  \bm{v}$, for all $\bm{v}\in H_0(\mathrm{curl},\Omega)$;
		\item[$(\mr{ii})$] $\nabla\cdot \wt{\mc{I}}_{h0}^{\mr{d}} \bm{v} = \wt{\mc{I}}_{h0}^{\mr{b}}\nabla\cdot  \bm{v}$, for all $\bm{v}\in H_0(\mathrm{div},\Omega)$.
	\end{enumerate}
\end{lemma}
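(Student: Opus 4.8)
The plan is to establish Lemma~\ref{lem:diagram 2} by the same mechanism that proved Lemma~\ref{lem:diagram}: namely, that the smoothed interpolation $\wt{\mc{I}}_{h0} = \mc{I}_h\mc{K}_{\delta,0}$ is a composition of two maps, each of which individually commutes with the relevant differential operator, so that the composition commutes as well. Concretely, for part $(\mr{i})$ I would argue: since $\mc{K}_{\delta,0}^{\mr{c}}$ and $\mc{K}_{\delta,0}^{\mr{d}}$ are drawn from the family of mollification operators constructed in \cite[Section 4.2]{Ern2016}, they satisfy the boundary-conforming commuting diagram property established there, i.e. $\nabla\times \mc{K}_{\delta,0}^{\mr{c}}\bm{v} = \mc{K}_{\delta,0}^{\mr{d}}\nabla\times\bm{v}$ for all $\bm{v}\in H_0(\mathrm{curl},\Omega)$ (this is the $\varrho$-dependent analogue of \cite[(3.7)]{Ern2016} adapted to functions with vanishing tangential trace). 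Then applying the canonical N\'ed\'elec/divergence interpolations and invoking the classical commuting diagram \cite[(5.59)]{Monk2003}, namely $\nabla\times\mc{I}_h^{\mr{c}} = \mc{I}_h^{\mr{d}}\nabla\times$ on sufficiently smooth fields, yields
\[
	\nabla\times \wt{\mc{I}}_{h0}^{\mr{c}}\bm{v}
	= \nabla\times \mc{I}_h^{\mr{c}}\mc{K}_{\delta,0}^{\mr{c}}\bm{v}
	= \mc{I}_h^{\mr{d}}\nabla\times \mc{K}_{\delta,0}^{\mr{c}}\bm{v}
	= \mc{I}_h^{\mr{d}}\mc{K}_{\delta,0}^{\mr{d}}\nabla\times\bm{v}
	= \wt{\mc{I}}_{h0}^{\mr{d}}\nabla\times\bm{v}.
\]
Part $(\mr{ii})$ is entirely analogous, replacing $\nabla\times$ by $\nabla\cdot$, $H_0(\mathrm{curl},\Omega)$ by $H_0(\mathrm{div},\Omega)$, the N\'ed\'elec interpolation by the divergence-conforming one, and the $L^2$ projection $\mc{I}_h^{\mr{b}}$ in the last slot, again combining the mollifier commuting property from \cite[Section 4.2]{Ern2016} with \cite[(5.59)]{Monk2003}.

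The one technical point that must be checked — and which I expect to be the only real obstacle — is that the compositions are well-defined: the mollified fields $\mc{K}_{\delta,0}^{\mr{c}}\bm{v}$ and $\mc{K}_{\delta,0}^{\mr{d}}\bm{v}$ must be smooth enough for the canonical interpolants $\mc{I}_h^{\mr{c}}$, $\mc{I}_h^{\mr{d}}$, $\mc{I}_h^{\mr{b}}$ to be applicable (the latter require, e.g., $H^s$-regularity with $s$ large enough, not merely $H(\mathrm{curl})$ or $H(\mathrm{div})$ membership). This is precisely why the mollifiers map into $C_0^1(\overline{\Omega})$ — the codomain stated in the definition of $\mc{K}_{\delta,0}$ — so that $\mc{K}_{\delta,0}^{\mr{c}}\bm{v}\in C^1$ and its curl is also $C^0$, hence interpolable, and likewise all intermediate quantities are $C^1$ or $C^0$; the vanishing-trace decoration ``$0$'' guarantees that the interpolants land in the subspaces with boundary conditions, which matters for the later best-approximation result but is not needed for the commuting identities themselves. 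Since each equality in the displayed chain is justified slot-by-slot by an already-cited result, and the well-definedness is handled by the $C_0^1$ codomain, the proof is short. I would simply note that it is ``identical to the proof of Lemma~\ref{lem:diagram}, using the boundary-conforming mollifiers of \cite[Section 4.2]{Ern2016} in place of those of \cite[Section 3.2 and 5.2]{Ern2016}'' and give the displayed chain of identities for $(\mr{i})$, leaving $(\mr{ii})$ to the reader.
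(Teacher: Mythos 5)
Your proposal is correct and follows exactly the argument the paper intends: the paper deduces Lemma~\ref{lem:diagram 2} (like Lemma~\ref{lem:diagram}) by composing the commuting property of the boundary-conforming mollifiers from \cite[Section 4.2]{Ern2016} with the classical commuting diagram \cite[(5.59)]{Monk2003} of the canonical interpolants, the $C^1_0(\overline{\Omega})$ codomain guaranteeing that the compositions are well defined. Your slot-by-slot chain of identities is precisely this argument made explicit.
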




By following the proof of Proposition \ref{pop:Ih}, we can
conclude the following approximation results.

\begin{proposition} [Local approximation]
	\label{pop:Ih0}
	Let $\widetilde{\mathcal{I}}_{h0}$ be defined as in
	\eqref{eq:Ih0} and $s\in [0,\frac{1}{2})$. There exists a
	constant $c>0$ independent of the mesh size such that for
	all $\bm{v}\in H_0(\mr{curl}, \Omega)\cap
	H^s(\mathcal{T}_h)^d$
	\[
		\|\bm{v}-\widetilde{\mathcal{I}}_{h0}^{\mathrm{c}}\bm{v}\|_{0,K} 
		\leq c \left( h_K^{s}\|\bm{v}\|_{s,D_K} 
		+h_K\|\nabla \times \bm{v}\|_{0,D_K} \right).
	\]
	Similarly, there holds
	\begin{alignat*}{3}
		\|\bm{v}-\wt{\mc{I}}_{h0}^{\mr{d}}\bm{v}\|_{0,K} 
		&\leq c h_K^{s}\|\bm{v}\|_{s,D_K}, \quad 
		& \forall \bm{v}\in H_{0}(\mr{div},\Omega)\cap H^s(\mc{T}_h)^{d}.
	\end{alignat*}
\end{proposition}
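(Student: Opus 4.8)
The plan is to mirror the proof of Proposition~\ref{pop:Ih}, replacing the mollification family $\mc{K}_\delta$ by the boundary-compatible family $\mc{K}_{\delta,0}$ of \cite[Section~4.2]{Ern2016}, and checking that every ingredient used for the interior estimate survives the passage to functions satisfying homogeneous boundary conditions. First I would record the structural identity $\bm{v}-\wt{\mc{I}}_{h0}^{\mr{c}}\bm{v}=(\bm{v}-\mc{K}_{\delta,0}^{\mr{c}}\bm{v})+(\mr{Id}-\mc{I}_h^{\mr{c}})\mc{K}_{\delta,0}^{\mr{c}}\bm{v}$, and similarly for the ${\mr{d}}$-component, so that the error splits into a mollification error and a canonical-interpolation error applied to a smooth (indeed $C^1(\overline{\Omega})$) function. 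The second term is handled by the classical local interpolation estimates for the N\'ed\'elec and Raviart--Thomas interpolants combined with the local $L^\infty\!-\!W^{1,\infty}$ or $H^1$ stability of $\mc{K}_{\delta,0}$ on the macro-patch $D_K$; the first term is controlled by the local fractional approximation property of $\mc{K}_{\delta,0}$, namely $\|\bm{v}-\mc{K}_{\delta,0}^{\mr{c}}\bm{v}\|_{0,K}\lesssim h_K^s\|\bm{v}\|_{s,D_K}$ for $s\in[0,\tfrac12)$, which is exactly the property established for $\mc{K}_\delta$ in Appendix~A and which holds for $\mc{K}_{\delta,0}$ by the same shrinking argument since the shrinking direction is chosen to stay inside $\Omega$.

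For the curl-conforming estimate I would additionally invoke the commuting property of Lemma~\ref{lem:diagram~2}$(\mr{i})$, which gives $\nabla\times(\bm{v}-\wt{\mc{I}}_{h0}^{\mr{c}}\bm{v})=\nabla\times\bm{v}-\wt{\mc{I}}_{h0}^{\mr{d}}\nabla\times\bm{v}$; then the extra term $h_K\|\nabla\times\bm{v}\|_{0,D_K}$ appears exactly as in Proposition~\ref{pop:Ih} from estimating the ${\mr{d}}$-interpolation error of $\nabla\times\bm{v}$ at regularity index $0$ (using $\nabla\times\bm{v}\in L^2$ only), after combining with an inverse-type or scaling argument on $K$. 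The divergence-conforming estimate is the special case obtained by running the same two-term split for $\wt{\mc{I}}_{h0}^{\mr{d}}$ on $\bm{v}\in H_0(\mr{div},\Omega)\cap H^s(\mc{T}_h)^d$, with no curl term present.

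The only genuinely non-routine point—hence the main obstacle—is verifying that the \emph{local} properties of $\mc{K}_{\delta,0}$ needed here (the fractional-order local approximation bound on $D_K$ and the local Sobolev stability) remain valid near the boundary $\Gamma$, since $\mc{K}_{\delta,0}$ uses a one-sided shrinking that is not translation-invariant there. This is precisely why the authors say ``By following the proof of Proposition~\ref{pop:Ih}'': the required boundary-adapted estimates are the $\mc{K}_{\delta,0}$-analogues of the new lemmas proved for $\mc{K}_\delta$ in Appendix~A, and they follow because the mesh-size function $g_h$ in \eqref{eq:gh} controls the shrinking radius elementwise and the macro-patch $D_K$ is large enough to contain the support of the mollified function. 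Once those boundary-adapted local estimates are in hand, assembling the three claimed bounds is a verbatim repetition of the interior argument, with $\widetilde{\mc{I}}_h$ replaced by $\widetilde{\mc{I}}_{h0}$ and $H(\mr{curl},\Omega)$, $H(\mr{div},\Omega)$ replaced by their zero-trace counterparts.
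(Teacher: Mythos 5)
Your plan is essentially the paper's own argument: the paper proves Proposition~\ref{pop:Ih0} simply ``by following the proof of Proposition~\ref{pop:Ih}'', i.e.\ the same triangle-inequality split into a mollification error and a canonical N\'ed\'elec/divergence-conforming interpolation error, controlled by the local convergence, stability and inverse estimates of Appendix~A transferred to the boundary-adapted family $\mathcal{K}_{\delta,0}$. You have correctly identified both the structure of that argument and the one genuinely non-trivial point (re-establishing the local mollification estimates for $\mathcal{K}_{\delta,0}$ near $\Gamma$), so no further comparison is needed.
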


\section{Definite Maxwell equations}
\label{sec:definiteMax}

The error estimates of the mixed DG discretization \eqref{eq:MDG1}-\eqref{eq:MDG2} for the definite Maxwell
equations with $k=0$ will greatly facilitate the analysis for the indefinite problem discussed in Section~\ref{sec:well-posedness}.

\subsection{Residual operators}

Following \cite[Section 6.1]{Houston2004}, we introduce two
consistency-related residual operators, which play a key role in
deriving an \textit{a priori} error estimate under minimal smoothness
requirements. 

Suppose that $(\bm{u},p)\in V\times Q$ is the exact solution of
continuous variational problem \eqref{eq:conForm1}-\eqref{eq:conForm2}. We define the residuals
\[
	\mathcal{R}_1(\bm{u},p;\bm{v},\eta):=
		A_h(\bm{u},0;\bm{v},\eta)+B_h(\bm{v},\eta;p)-(\bm{j},\bm{v}),
		\quad \mbox{and}\quad
		\mathcal{R}_2(\bm{u},q)=B_h(\bm{u},0;q),
\]
for all $(\bm{v},\eta)\in W_h$ and $q\in Q_h$. We also define
norms of the residual operators as
\[
	\mathcal{R}_1(\bm{u},p)=\sup_{0\neq (\bm{v},\eta)\in W_h} 
		\frac{\mathcal{R}_1(\bm{u},p;\bm{v},\eta)}{\|(\bm{v},\eta)\|_{W(h)}},
	\quad
	\mathcal{R}_2(\bm{u}) = \sup_{0\neq q\in Q_h} 
		\frac{\mathcal{R}_2(\bm{u};q)}{\|q\|_{Q(h)}}.
\]

The analysis of \cite{Houston2004} relies crucially on the the
smoothness $H^s(\mathcal{T}_h), s>\frac{1}{2}$. In this section,
we will extend the analysis to $s\in [0,\frac{1}{2})$.

\begin{lemma}
	\label{lem:consistency Vhc}
	Let $(\bm{u},p)\in V\times Q$ be the solution of
	\eqref{eq:conForm1}-\eqref{eq:conForm2}, then
	\begin{alignat*}{3}
		\mathcal{R}_1(\bm{u},p;\bm{v},\eta)&=0, \qquad
			&& \forall (\bm{v},\eta)\in V_{h0}^{\mathrm{c}}\times M_h,\\
		\mathcal{R}_2(\bm{u},q) &=0,	&& \forall q\in Q_h^{\mathrm{c}}.
	\end{alignat*}
\end{lemma}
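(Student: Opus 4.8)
The plan is to show that for conforming test functions the residuals $\mathcal{R}_1$ and $\mathcal{R}_2$ reduce to expressions that vanish by the continuous variational identities \eqref{eq:conForm1}--\eqref{eq:conForm2}. The key observation is that when $\bm{v}\in V_{h0}^{\mathrm{c}}\subset H_0(\mathrm{curl},\Omega)$ the tangential jump $\llbracket\bm{v}\rrbracket_T$ vanishes on every face $F\in\mathcal{F}_h$, hence $\mathcal{R}_F(\llbracket\bm{v}\rrbracket_T)=0$ by the definition \eqref{eq:lifting} of the lifting operator; similarly, when $q\in Q_h^{\mathrm{c}}\subset H^1(\Omega)$ (here I interpret $Q_h^{\mathrm{c}}$ as the conforming subspace with the zero boundary trace built in, analogous to $V_{h0}^{\mathrm{c}}$) the normal jump $\llbracket q\rrbracket_N$ vanishes on all faces, so $\mathcal{R}_F(\llbracket q\rrbracket_N)=0$.

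First I would expand $\mathcal{R}_1(\bm{u},p;\bm{v},\eta)$ using the definitions of $A_h$ and $B_h$. Since the first argument of $A_h$ carries $\lambda=0$, the $\gamma_F$-terms involving $\mathcal{R}_F(\lambda)$ drop out, leaving $A_h(\bm{u},0;\bm{v},\eta)=a_h(\bm{u},\bm{v})$. In $a_h(\bm{u},\bm{v})$, the term $(\mathcal{R}(\llbracket\bm{v}\rrbracket_T),\mu^{-1}\nabla_h\times\bm{u})$ and the stabilization sum $\sum_F\alpha_F(\mu^{-1}\mathcal{R}_F(\llbracket\bm{u}\rrbracket_T),\mathcal{R}_F(\llbracket\bm{v}\rrbracket_T))$ both vanish because $\llbracket\bm{v}\rrbracket_T=0$; moreover the term $(\mathcal{R}(\llbracket\bm{u}\rrbracket_T),\mu^{-1}\nabla_h\times\bm{v})$ also vanishes because the exact solution $\bm{u}\in V=H_0(\mathrm{curl},\Omega)$ has $\llbracket\bm{u}\rrbracket_T=0$ as well. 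Since $\nabla_h\times\bm{v}=\nabla\times\bm{v}$ for $\bm{v}\in H(\mathrm{curl},\Omega)$, we are left with $a_h(\bm{u},\bm{v})=(\mu^{-1}\nabla\times\bm{u},\nabla\times\bm{v})=a(\bm{u},\bm{v})$. Likewise in $B_h(\bm{v},\eta;p)=b_h(\bm{v},p)-\sum_F\gamma_F(\varepsilon\mathcal{R}_F(\llbracket p\rrbracket_N),\mathcal{R}_F(\eta))$: the exact Lagrange multiplier $p\in Q=H^1_0(\Omega)$ has $\llbracket p\rrbracket_N=0$, so the $\gamma_F$-sum disappears and the lifting term in $b_h$ disappears, giving $B_h(\bm{v},\eta;p)=b_h(\bm{v},p)=-(\varepsilon\bm{v},\nabla p)=b(\bm{v},p)$. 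Collecting, $\mathcal{R}_1(\bm{u},p;\bm{v},\eta)=a(\bm{u},\bm{v})+b(\bm{v},p)-(\bm{j},\bm{v})$, which is zero for all $\bm{v}\in V$ by \eqref{eq:conForm1} (note $k=0$ here) — in particular for $\bm{v}\in V_{h0}^{\mathrm{c}}\subset V$. The choice of $\eta\in M_h$ is irrelevant since every $\eta$-dependent term has been eliminated.

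For the second residual, $\mathcal{R}_2(\bm{u},q)=B_h(\bm{u},0;q)=b_h(\bm{u},q)-\sum_F\gamma_F(\varepsilon\mathcal{R}_F(\llbracket q\rrbracket_N),\mathcal{R}_F(0))$. The second sum vanishes trivially ($\mathcal{R}_F(0)=0$ because the second slot carries $\eta=0$), and in $b_h(\bm{u},q)=-(\varepsilon\bm{u},\nabla_h q)+(\mathcal{R}(\llbracket q\rrbracket_N),\varepsilon\bm{u})$ the lifting term vanishes since $q\in Q_h^{\mathrm{c}}$ implies $\llbracket q\rrbracket_N=0$, while $\nabla_h q=\nabla q$. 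Hence $\mathcal{R}_2(\bm{u},q)=-(\varepsilon\bm{u},\nabla q)=b(\bm{u},q)$, which is zero for all $q\in Q$ by \eqref{eq:conForm2}, in particular for $q\in Q_h^{\mathrm{c}}$.

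The computation is entirely routine; there is no real obstacle, only bookkeeping. The only point that deserves care is making sure the right terms are killed for the right reason: the vanishing of $\llbracket\bm{v}\rrbracket_T$ handles the test-function side, whereas the vanishing of $\llbracket\bm{u}\rrbracket_T$ and $\llbracket p\rrbracket_N$ — which follows from the \emph{exact} solution living in the conforming spaces $H_0(\mathrm{curl},\Omega)$ and $H^1_0(\Omega)$ — handles the trial-function side, and both are needed to reduce $a_h,b_h$ to $a,b$. One should also note that all the lifted quantities make sense here precisely because $\mathcal{R}_F$ is well-defined on $L^2(F)^d$ (Remark~\ref{rmk:well-defined} and the extensions $a_h:V\times V\to\mathbb{R}$, $b_h:V\times Q\to\mathbb{R}$), so no trace regularity of $\bm{u}$ or $p$ is invoked anywhere.
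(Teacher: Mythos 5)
Your proposal is correct and follows essentially the same route as the paper's (much terser) proof: for conforming test functions all lifting/jump terms drop out -- using $\llbracket\bm{v}\rrbracket_T=0$, $\llbracket q\rrbracket_N=0$ on the test side and the conventional vanishing of $\llbracket\bm{u}\rrbracket_T$, $\llbracket p\rrbracket_N$ for the exact solution in $V\times Q$ on the trial side -- so that $\mathcal{R}_1$ and $\mathcal{R}_2$ reduce to $a(\bm{u},\bm{v})+b(\bm{v},p)-(\bm{j},\bm{v})$ and $b(\bm{u},q)$, which vanish by \eqref{eq:conForm1}--\eqref{eq:conForm2}. Your version simply makes explicit the bookkeeping that the paper leaves implicit.
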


\begin{proof}
	The identities are actually direct results of
	\eqref{eq:conForm1}-\eqref{eq:conForm2}. In fact, for all $\bm{v}\in
	V_{h0}^{\mathrm{c}}$, there holds from the definition of
	$\mathcal{R}_1$ and
	\eqref{eq:conForm1}
	\[
		\mathcal{R}_1(\bm{u},p;\bm{v},\eta)=
		a(\bm{u},\bm{v})+b(\bm{v},q)-(\bm{j},\bm{v})=0.
	\]
	The other identity follows from \eqref{eq:conForm2}
	\[
		\mathcal{R}_2(\bm{u},q) = b(\bm{u},q)=0.
	\]
\end{proof}

The following lemma estimates the residuals for the smooth case
$s>\frac{1}{2}$.

\begin{lemma}[Residual estimates]
	\label{lem:res est}
	Let $(\bm{u},p)\in V\times Q$ be the solution of
	\eqref{eq:conForm1}-\eqref{eq:conForm2} with
	\[
		\varepsilon\bm{u},\ \mu^{-1}\nabla\times \bm{u} 
		\in H^s(\mc{T}_h),\ s> \frac{1}{2}.
	\]
	Then, there hold
	\begin{align*}
		\mathcal{R}_1(\bm{u},p)
		&\leq  C\left( \sum_{K\in \mc{T}_h} h_K^{2\min\{s,\ell\}}
		\|\mu^{-1}\nabla\times \bm{u}\|_{s,K}^2 \right)^{1/2}, \\
		\mathcal{R}_2(\bm{u})
		&\leq C\left( \sum_{K\in \mc{T}_h} h_K^{2\min\{s,\ell\}}
		\|\varepsilon \bm{u}\|_{s,K}^2 \right)^{1/2} ,
	\end{align*}
	where the constant $C>0$ is independent of the mesh size.
\end{lemma}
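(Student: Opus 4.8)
The plan is to estimate $\mathcal{R}_1$ and $\mathcal{R}_2$ separately by exploiting the consistency identities of Lemma~\ref{lem:consistency Vhc} together with the conforming projection $\Pi_h^{\mathrm{c}}$ of Lemma~\ref{lem:operator Pi}. For $\mathcal{R}_1$, fix $(\bm{v},\eta)\in W_h$ and split $\bm{v} = \Pi_h^{\mathrm{c}}\bm{v} + (\bm{v}-\Pi_h^{\mathrm{c}}\bm{v})$. Since $\Pi_h^{\mathrm{c}}\bm{v}\in V_{h0}^{\mathrm{c}}$, Lemma~\ref{lem:consistency Vhc} gives $\mathcal{R}_1(\bm{u},p;\Pi_h^{\mathrm{c}}\bm{v},\eta)=0$, so by linearity we only need to bound $\mathcal{R}_1(\bm{u},p;\bm{w},0)$ with $\bm{w}:=\bm{v}-\Pi_h^{\mathrm{c}}\bm{v}$. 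Expanding $A_h(\bm{u},0;\bm{w},0)+B_h(\bm{w},0;p)-(\bm{j},\bm{w})$ using the equivalent face-integral expressions from Remark~\ref{rmk:well-defined}, and noting $\llbracket\bm{u}\rrbracket_T=0$ and $\llbracket p\rrbracket_N=0$ on interior faces (since $\bm u\in H_0(\mathrm{curl},\Omega)$, $p\in H^1_0(\Omega)$, both with vanishing traces where relevant), the lifting terms involving $\bm u$ and $p$ drop out. Integrating $(\mu^{-1}\nabla_h\times\bm u,\nabla_h\times\bm w) - (\varepsilon\bm w,\nabla_h p)$ by parts elementwise and using the strong form \eqref{eq:Max-1} with $k=0$, the volume contributions cancel $(\bm j,\bm w)$, leaving only face terms of the shape $\int_{\mathcal{F}_h}\llbracket\bm w\rrbracket_T\cdot\llbrace\mu^{-1}\nabla\times\bm u\rrbrace\,\mathrm{d}s$ (plus a term from the jump of $\varepsilon\bm u$ paired against $\llbracket p\rrbracket_N$, which vanishes). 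This is where the regularity $s>\tfrac12$ is used: $\mu^{-1}\nabla\times\bm u\in H^s(\mathcal{T}_h)$ with $s>\tfrac12$ has a well-defined $L^2$ trace on each face, so $\llbrace\mu^{-1}\nabla\times\bm u\rrbrace$ makes sense, and we may subtract any piecewise-polynomial approximation (e.g.\ the elementwise $L^2$-projection $\bm\pi$ of $\mu^{-1}\nabla\times\bm u$) since $\int_F\llbracket\bm w\rrbracket_T\cdot\llbrace\bm\pi\rrbrace$ — wait, $\bm\pi$ is still discontinuous; rather, we insert a single-valued quantity. The cleaner route: since $\mu^{-1}\nabla\times\bm u$ is the curl of something in $H(\mathrm{curl})$, its tangential trace is single-valued, hence $\llbracket\bm w\rrbracket_T\cdot\llbrace\mu^{-1}\nabla\times\bm u\rrbrace$ can be replaced by $\llbracket\bm w\rrbracket_T\cdot(\mu^{-1}\nabla\times\bm u - \bm\phi)$ for any single-valued $L^2$ field $\bm\phi$; choosing $\bm\phi$ to be a best approximation and applying a trace inequality $\|\mu^{-1}\nabla\times\bm u-\bm\phi\|_{0,F}\le C h_K^{s-1/2}\|\mu^{-1}\nabla\times\bm u\|_{s,D_K}$ together with $\|\llbracket\bm w\rrbracket_T\|_{0,F}\le C h_F^{1/2}\|\mathcal{R}_F(\llbracket\bm w\rrbracket_T)\|_{0,\Omega}$ from \eqref{eq:lifting stability}, and then the stability estimate $\sum_F\|\mathcal{R}_F(\llbracket\bm w\rrbracket_T)\|_{0,\Omega}^2\le C|\bm v|_{V(h)}^2\le C\|(\bm v,\eta)\|_{W(h)}^2$ (here using $\llbracket\Pi_h^{\mathrm c}\bm v\rrbracket_T=0$ so $\llbracket\bm w\rrbracket_T=\llbracket\bm v\rrbracket_T$), gives after Cauchy--Schwarz in $F$ the bound $\mathcal{R}_1(\bm u,p;\bm v,\eta)\le C\big(\sum_K h_K^{2\min\{s,\ell\}}\|\mu^{-1}\nabla\times\bm u\|_{s,K}^2\big)^{1/2}\|(\bm v,\eta)\|_{W(h)}$, where the power $h_K^{2\min\{s,\ell\}}$ arises by capping $s$ at $\ell$ via the polynomial approximation and absorbing the $h_F^{1/2}\cdot h_K^{-1/2}$ factors.

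For $\mathcal{R}_2$, the argument is entirely parallel but one degree lower: $\mathcal{R}_2(\bm u;q)=B_h(\bm u,0;q)=-(\varepsilon\bm u,\nabla_h q)+\int_{\mathcal F_h}\llbrace\varepsilon\bm u\rrbrace\cdot\llbracket q\rrbracket_N\,\mathrm{d}s$ (the $\gamma_F$ term vanishes since $\lambda=0$). Split $q = \Pi_h^{\mathrm c,Q}q + (q-\Pi_h^{\mathrm c,Q}q)$ using a scalar conforming projection onto $Q_h^{\mathrm c}$; by Lemma~\ref{lem:consistency Vhc} the conforming part contributes $\mathcal{R}_2(\bm u;\Pi_h^{\mathrm c,Q}q)=0$, so only $r:=q-\Pi_h^{\mathrm c,Q}q$ remains. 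Integrating by parts elementwise and using $\nabla\cdot(\varepsilon\bm u)=0$ with the boundary condition, the volume term becomes face terms $\int_{\mathcal F_h}\llbracket r\rrbracket_N\cdot\llbrace\varepsilon\bm u\rrbrace\,\mathrm{d}s$; since $\nabla\cdot(\varepsilon\bm u)=0$ the normal trace $\varepsilon\bm u\cdot\bm n$ is single-valued, so we may subtract a best single-valued $L^2$-approximation and estimate exactly as before, now with the trace/approximation estimate for $\varepsilon\bm u\in H^s(\mathcal T_h)$ and the lifting stability for the scalar lifting $\mathcal{R}_F(\llbracket r\rrbracket_N)$, which together with the analogue of Lemma~\ref{lem:operator Pi} for the scalar space yields $\sum_F\|\mathcal{R}_F(\llbracket r\rrbracket_N)\|^2\le C\|q\|_{Q(h)}^2$, giving $\mathcal{R}_2(\bm u)\le C\big(\sum_K h_K^{2\min\{s,\ell\}}\|\varepsilon\bm u\|_{s,K}^2\big)^{1/2}$.

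The main obstacle is the first step of the $\mathcal{R}_1$ estimate: carefully justifying that, after integration by parts, the face integrals only involve the \emph{single-valued} tangential trace of $\mu^{-1}\nabla\times\bm u$ (respectively the single-valued normal trace $\varepsilon\bm u\cdot\bm n$ for $\mathcal{R}_2$), so that one may legitimately insert a globally single-valued approximation and invoke the fractional trace inequality with the macro-patch $D_K$ on the right-hand side. This requires $s>\tfrac12$ to give meaning to the traces, and it is precisely this restriction that Section~\ref{sec:definiteMax} later removes via the lifting operator and smoothed interpolations; here we are content with the smooth case. The remaining steps — elementwise integration by parts, polynomial approximation with the $\min\{s,\ell\}$ cap, the scaled trace inequality, and the lifting-stability/conforming-projection bounds — are routine and combine by Cauchy--Schwarz over $F\in\mathcal F_h$, using shape-regularity to control the bounded overlap of the patches $D_K$.
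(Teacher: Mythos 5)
Your overall strategy (elementwise integration by parts against the strong form, single-valuedness of the tangential/normal traces for $s>\frac12$, a scaled trace inequality, lifting stability) is the standard one; the paper itself simply refers to the proof of Proposition 6.2 of \cite{Houston2004} for this lemma. However, the execution has a concrete gap at exactly the step you flag as the ``main obstacle.'' First, the identity $\left(\mathcal{R}(\llbracket \bm{w}\rrbracket_{T}),\mu^{-1}\nabla\times\bm{u}\right)=\int_{\mathcal{F}_h}\llbracket\bm{w}\rrbracket_{T}\cdot\llbrace\mu^{-1}\nabla\times\bm{u}\rrbrace\,\mathrm{d}s$ suggested by Remark \ref{rmk:well-defined} is only valid when the second argument lies in $V_h$: the lifting $\mathcal{R}_F(\eta)$ is by \eqref{eq:lifting} the Riesz representer of the face functional \emph{in $V_h$}, so pairing it with the non-discrete field $\mu^{-1}\nabla\times\bm{u}$ does not reproduce the face integral. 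Second, your ``cleaner route'' --- replacing $\llbrace\mu^{-1}\nabla\times\bm{u}\rrbrace$ by $\mu^{-1}\nabla\times\bm{u}-\bm{\phi}$ for an arbitrary single-valued $\bm{\phi}$ --- is not an identity: single-valuedness of the tangential trace of $\mu^{-1}\nabla\times\bm{u}\in H(\mathrm{curl},\Omega)$ lets you discard the term $\int_F\llbracket\mu^{-1}\nabla\times\bm{u}\rrbracket_{T}\cdot\llbrace\bm{w}\rrbrace\,\mathrm{d}s$ produced by the integration by parts, but it does not allow you to subtract an arbitrary single-valued field from the average, since $\int_F\llbracket\bm{w}\rrbracket_{T}\cdot\bm{\phi}\,\mathrm{d}s\neq0$ in general.

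Both problems are repaired by the same standard device, which is what the cited proof does and what the paper's own Proposition \ref{pop:res est} does for $s<\frac12$ with the smoothed interpolant: insert an elementwise polynomial approximation $\bm{\pi}\in V_h$ of $\sigma(\bm{u})=\mu^{-1}\nabla\times\bm{u}$ (for $s>\frac12$ the local $L^2$ projection suffices). Then $(\mathcal{R}(\llbracket\bm{w}\rrbracket_{T}),\bm{\pi})=\int_{\mathcal{F}_h}\llbracket\bm{w}\rrbracket_{T}\cdot\llbrace\bm{\pi}\rrbrace\,\mathrm{d}s$ holds exactly, and after integration by parts the residual reduces to $\int_{\mathcal{F}_h}\llbracket\bm{w}\rrbracket_{T}\cdot\llbrace\sigma(\bm{u})-\bm{\pi}\rrbrace\,\mathrm{d}s-(\mathcal{R}(\llbracket\bm{w}\rrbracket_{T}),\sigma(\bm{u})-\bm{\pi})$, both pieces of which are bounded by $h_K^{\min\{s,\ell\}}\|\sigma(\bm{u})\|_{s,K}$ times $\|(\bm{v},\eta)\|_{W(h)}$ using the fractional trace inequality (this is where $s>\frac12$ enters) and \eqref{eq:lifting stability}. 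The analogous correction applies to $(\mathcal{R}(\llbracket q\rrbracket_{N}),\varepsilon\bm{u})$ in the $\mathcal{R}_2$ estimate. Two smaller points: you do not need a scalar analogue of Lemma \ref{lem:operator Pi}, since $\sum_{F}\|\varepsilon^{1/2}\mathcal{R}_F(\llbracket q\rrbracket_{N})\|_{0,\Omega}^2$ is already part of $\|q\|_{Q(h)}^2$ by definition; and in fact the conforming splittings of $\bm{v}$ and $q$ are unnecessary for $s>\frac12$, because after integration by parts every surviving term is controlled by the jumps alone (the splitting becomes essential only in the low-regularity argument of Proposition \ref{pop:res est}, where one must pair $\nabla\times\sigma(\bm{u})-\wt{\mc{I}}_h^{\mr{d}}\nabla\times\sigma(\bm{u})$ against $\|\bm{v}^{\perp}\|_{0,K}\lesssim h_K$).
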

\begin{proof}
	See the proof of \cite[Proposition 6.2]{Houston2004}.
\end{proof}


Now, we are ready to state our main results about the residuals. 

\begin{proposition}[Residual estimates]
	\label{pop:res est}
	Let $(\bm{u},p)\in V\times Q$ be the solution of
	\eqref{eq:conForm1}-\eqref{eq:conForm2} for $k=0$ with
	\[
		\varepsilon\bm{u},\ \mu^{-1}\nabla\times \bm{u} 
		\in H^s(\mc{T}_h), \ 0\leq s< \frac{1}{2},
	\]
	Then, there hold
	\begin{align*}
		\mathcal{R}_1(\bm{u},p)
		&\leq  C\left( \sum_{K\in \mc{T}_h} h_K^{2s} \|\mu^{-1}\nabla\times \bm{u}\|_{s,D_K}^2
		+ h_{K}^2\|\nabla\times \mu^{-1}\nabla\times \bm{u}\|_{0,D_K}^2
		\right)^{1/2}, \\
		\mathcal{R}_2(\bm{u})
		&\leq  C\left( \sum_{K\in \mc{T}_h} h_K^{2s} 
			\|\varepsilon \bm{u}\|_{s,D_K}^2 \right)^{\frac{1}{2}},
	\end{align*}
	where the constant $C>0$ is independent of the mesh size.
\end{proposition}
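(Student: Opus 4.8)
The plan is to estimate $\mathcal{R}_1$ and $\mathcal{R}_2$ separately, in each case exploiting the consistency identities of Lemma~\ref{lem:consistency Vhc} to insert a conforming object at no cost, and then controlling the remaining terms by the smoothed interpolation estimates of Proposition~\ref{pop:Ih} rather than by trace inequalities (which are unavailable for $s<\tfrac12$). For $\mathcal{R}_2(\bm{u})$: given any $q\in Q_h$, write $B_h(\bm{u},0;q) = -(\varepsilon\bm{u},\nabla_h q) + \int_{\mathcal{F}_h}\llbrace\varepsilon\bm{u}\rrbrace\cdot\llbracket q\rrbracket_N\,\mathrm{d}s$. Using that $\mathcal{R}_2(\bm{u},q^{\mathrm c})=0$ for the conforming part, subtract a conforming approximation; the natural move is to integrate by parts elementwise and use $\nabla\cdot(\varepsilon\bm{u})=0$ so that $\mathcal{R}_2(\bm{u},q)$ reduces to a sum over faces of $\bigl(\varepsilon\bm{u}-\wt{\mathcal{I}}_h^{\mathrm d}(\varepsilon\bm{u})\bigr)$ paired with jumps of $q$ — but since no face trace of $\varepsilon\bm{u}$ is available, instead I re-express everything through the lifting operator $\mathcal{R}_F$, i.e.\ $\int_{\mathcal{F}_h}\llbrace\varepsilon\bm{u}\rrbrace\cdot\llbracket q\rrbracket_N = (\mathcal{R}(\llbracket q\rrbracket_N),\varepsilon\bm{u})$, and then replace $\bm{u}$ by a conforming smoothed interpolant. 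Concretely, with $\bm{w}_h := \wt{\mathcal{I}}_{h0}^{\mathrm d}(\varepsilon\bm{u})$ or the relevant conforming projection, consistency kills the conforming contribution and one is left with a volume pairing of $\varepsilon\bm{u}-\bm{w}_h$ against $\nabla_h q$ and $\mathcal{R}(\llbracket q\rrbracket_N)$; Cauchy--Schwarz, Proposition~\ref{pop:Ih0} (giving $\|\varepsilon\bm{u}-\bm{w}_h\|_{0,K}\le c\,h_K^s\|\varepsilon\bm{u}\|_{s,D_K}$ since $\nabla\cdot(\varepsilon\bm{u})=0$ removes the curl/div correction term), and the definition of $\|q\|_{Q(h)}$ yield the stated bound.

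For $\mathcal{R}_1(\bm{u},p)$: given $(\bm{v},\eta)\in W_h$, use $\mathcal{R}_1(\bm{u},p;\bm{v}^{\mathrm c},\eta)=0$ for $\bm{v}^{\mathrm c}=\Pi_h^{\mathrm c}\bm{v}\in V_{h0}^{\mathrm c}$ from Lemma~\ref{lem:consistency Vhc}, so that $\mathcal{R}_1(\bm{u},p;\bm{v},\eta)=\mathcal{R}_1(\bm{u},p;\bm{v}-\Pi_h^{\mathrm c}\bm{v},\eta)$. Since $\llbracket\Pi_h^{\mathrm c}\bm{v}\rrbracket_T=0$, the $\gamma_F$ and $b_h$ lifting terms involving $\eta$ and $\llbracket p\rrbracket_N=0$ ($p\in Q$) vanish, and what survives is $a_h(\bm{u},\bm{v}-\Pi_h^{\mathrm c}\bm{v})+b_h(\bm{v}-\Pi_h^{\mathrm c}\bm{v},p)-(\bm{j},\bm{v}-\Pi_h^{\mathrm c}\bm{v})$. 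Now substitute the PDE \eqref{eq:Max-1} with $k=0$, namely $\bm{j}=\nabla\times(\mu^{-1}\nabla\times\bm{u})-\varepsilon\nabla p$, and integrate by parts elementwise on the test function $\bm{\phi}:=\bm{v}-\Pi_h^{\mathrm c}\bm{v}$. The boundary terms that appear are of the form $\sum_K\int_{\partial K}(\mu^{-1}\nabla\times\bm{u})\cdot(\bm{n}\times\bm{\phi})$; the key point is that $\mu^{-1}\nabla\times\bm{u}$ has no face trace, so one does not integrate by parts on $\bm{u}$ but rather writes these face integrals via the lifting operator applied to $\llbracket\bm{\phi}\rrbracket_T$ and then inserts the smoothed interpolant $\wt{\mathcal{I}}_h^{\mathrm d}(\mu^{-1}\nabla\times\bm{u})$, which \emph{is} face-regular, so that $\int_{\mathcal{F}_h}\llbracket\bm{\phi}\rrbracket_T\cdot\llbrace\wt{\mathcal{I}}_h^{\mathrm d}(\mu^{-1}\nabla\times\bm{u})\rrbrace = (\mathcal{R}(\llbracket\bm{\phi}\rrbracket_T),\wt{\mathcal{I}}_h^{\mathrm d}(\mu^{-1}\nabla\times\bm{u}))$ is legitimate. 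The difference $\mu^{-1}\nabla\times\bm{u}-\wt{\mathcal{I}}_h^{\mathrm d}(\mu^{-1}\nabla\times\bm{u})$ is controlled by Proposition~\ref{pop:Ih}, producing the $h_K^s\|\mu^{-1}\nabla\times\bm{u}\|_{s,D_K}$ term, while the $\nabla\times(\mu^{-1}\nabla\times\bm{u})$ term (well-defined in $L^2$ by Lemma~\ref{lem:regularity}) pairs with $\bm{\phi}$ in a volume integral and, using Lemma~\ref{lem:operator Pi 2} to bound $\|\bm{\phi}\|_{0,K}\le c\,h_K\bigl(\sum_{F\in\partial K}\|\mathcal{R}_F(\llbracket\bm{v}\rrbracket_T)\|_{0,\Omega}^2\bigr)^{1/2}$, gives the $h_K\|\nabla\times\mu^{-1}\nabla\times\bm{u}\|_{0,D_K}$ term. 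The $\varepsilon\nabla p$ piece is handled similarly, pairing $\nabla p$ against $\varepsilon\bm{\phi}$ and bounding by $\|\bm{\phi}\|_{0,K}$, but — since $p\in H^{1+s}(\mathcal{T}_h)$ with $1+s>1>\tfrac12$ its gradient \emph{does} have a trace, or alternatively one keeps it in volume form — contributing a term absorbed into the stated estimate; in fact because $p\in Q=H^1_0(\Omega)$ is globally conforming, $\llbracket p\rrbracket_N=0$ and this term reduces cleanly.

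The estimates $\|\mathcal{R}(\llbracket\bm{\phi}\rrbracket_T)\|_{0,\Omega}\le \sum_F\|\mathcal{R}_F(\llbracket\bm{v}\rrbracket_T)\|_{0,\Omega}\le c\,|\bm{v}|_{V(h)}$ (using $\llbracket\Pi_h^{\mathrm c}\bm{v}\rrbracket_T=0$ so $\llbracket\bm{\phi}\rrbracket_T=\llbracket\bm{v}\rrbracket_T$) and the norm equivalences then let me divide by $\|(\bm{v},\eta)\|_{W(h)}$ and take the supremum, collecting terms element by element into the asserted square-root-of-sum form, where each smoothed-interpolant term contributes $h_K^s\|\cdot\|_{s,D_K}$ and each zeroth-order term contributes $h_K\|\cdot\|_{0,D_K}$. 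The main obstacle is purely bookkeeping-free conceptually but delicate in execution: making sure that \emph{every} face integral arising from elementwise integration by parts of the non-smooth data $\mu^{-1}\nabla\times\bm{u}$ (and of $\bm{u}$ itself inside $a_h$) is rewritten through the lifting operator \emph{before} any trace is taken, so that one only ever takes traces of the smoothed interpolants or of discrete functions; the role of $D_K$ versus $K$ (the macro-element support coming from the mollification in Proposition~\ref{pop:Ih}) must be tracked, and the overlap of the $D_K$ is finite by shape regularity so that $\sum_K\|\cdot\|_{s,D_K}^2\lesssim\|\cdot\|_{s,\mathcal{T}_h}^2$, which is what ultimately keeps the constant mesh-independent.
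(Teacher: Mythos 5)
Your overall strategy coincides with the paper's: reduce to $\bm{v}^{\perp}=\bm{v}-\Pi_h^{\mathrm c}\bm{v}$ via the consistency of $\mathcal{R}_1$ on $V_{h0}^{\mathrm c}\times M_h$, insert a conforming smoothed interpolant of the data into the lifting term, convert to volume integrals, and conclude with Proposition~\ref{pop:Ih} and Lemma~\ref{lem:operator Pi 2}. Your $\mathcal{R}_2$ argument is essentially the paper's (insert $\wt{\mc{I}}_h^{\mr d}(\varepsilon\bm{u})$, integrate by parts, use the commuting diagram and $\nabla\cdot(\varepsilon\bm{u})=0$), modulo the detail that the boundary-adapted operator $\wt{\mc{I}}_{h0}^{\mr d}$ requires $H_0(\mathrm{div},\Omega)$, which $\varepsilon\bm{u}$ need not satisfy; the plain $\wt{\mc{I}}_h^{\mr d}$ is the right choice.

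There is, however, a genuine gap in the $\mathcal{R}_1$ part: you interpolate $\sigma:=\mu^{-1}\nabla\times\bm{u}$ with the \emph{divergence}-conforming operator $\wt{\mc{I}}_h^{\mr d}$. This fails on two counts. First, the step you need after writing $\int_{\mc{F}_h}\llbracket\bm{\phi}\rrbracket_T\cdot\llbrace \bm{w}_h\rrbrace\,\mathrm{d}s=(\mathcal{R}(\llbracket\bm{\phi}\rrbracket_T),\bm{w}_h)$ — and which your sketch omits entirely — is the elementwise integration by parts turning this face sum into $-(\nabla\times\bm{w}_h,\bm{\phi})+(\bm{w}_h,\nabla_h\times\bm{\phi})$, so that it telescopes against the remaining terms $(\sigma,\nabla_h\times\bm{\phi})-(\nabla\times\sigma,\bm{\phi})$ and leaves only differences $\sigma-\bm{w}_h$. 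That identity holds only if $\bm{w}_h$ is \emph{tangentially} continuous; a Raviart--Thomas-type interpolant is only normally continuous, so uncontrolled terms $\int_F\llbrace\bm{\phi}\rrbrace\cdot\llbracket\bm{w}_h\rrbracket_T\,\mathrm{d}s$ survive. Second, the approximation bound in Proposition~\ref{pop:Ih} for $\wt{\mc{I}}_h^{\mr d}$ requires membership in $H(\mathrm{div},\Omega)$, which $\sigma$ does not have in general (its normal component may jump across material interfaces since $\mu$ is only piecewise constant). The fix is the paper's: observe from \eqref{eq:conForm1} that $\nabla\times\sigma=\bm{j}+\varepsilon\nabla p\in L^2(\Omega)^d$, hence $\sigma\in H(\mathrm{curl},\Omega)$, and use the \emph{curl}-conforming interpolant $\wt{\mc{I}}_h^{\mr c}\sigma$, whose error bound $\|\sigma-\wt{\mc{I}}_h^{\mr c}\sigma\|_{0,K}\le c\,(h_K^{s}\|\sigma\|_{s,D_K}+h_K\|\nabla\times\sigma\|_{0,D_K})$ also explains the second term in the asserted estimate, with the commuting property $\nabla\times\wt{\mc{I}}_h^{\mr c}\sigma=\wt{\mc{I}}_h^{\mr d}\nabla\times\sigma$ handling the curl difference. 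Relatedly, your narration first produces the ill-defined face integrals $\int_{\partial K}\sigma\cdot(\bm{n}\times\bm{\phi})\,\mathrm{d}s$ by ``integrating by parts on $\bm{\phi}$'' and then repairs them; the only valid order of operations is to start from the well-defined volume pairing $(\mathcal{R}(\llbracket\bm{\phi}\rrbracket_T),\sigma)$ already present in $a_h$, split off the interpolant, and take face traces of the discrete interpolant alone.
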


\begin{proof}
	Since $\bm{u}\in V$ we have $\llbracket
	\bm{u}\rrbracket_{T}=\bm{0}$,  and from \eqref{eq:conForm2}
	we have $\nabla\cdot \varepsilon\bm{u}=0$, which implies
	$\llbracket \varepsilon\bm{u}\rrbracket_{N}=0$. Let
	$\sigma(\bm{u}) := \mu^{-1}\nabla\times \bm{u}$. From
	\eqref{eq:conForm1}, we know that 
	\begin{equation}
		\label{eq:lem:expression 0}
		\nabla\times \sigma(\bm{u})-\varepsilon \nabla p 
		= \bm{j}\in L^2(\Omega).
	\end{equation}
	Hence, $\sigma(\bm{u})\in H(\mathrm{curl},\Omega)$ by
	noticing that $\nabla p\in L^2(\Omega)$ and that
	$\varepsilon$ bounded. 
	
	\textit{Step 1: Estimate of $\mc{R}_1$}.  For any
	$(\bm{v},\eta)\in W_h$, from the definition of
	$\mathcal{R}_1$ and \eqref{eq:lem:expression 0}, one gets 
	\begin{equation}
		\label{eq:lem:expression 1}
		\begin{split}
			\mathcal{R}_1(\bm{u},p;\bm{v},\eta)
			&=(\sigma(\bm{u}), \nabla_h\times \bm{v})
			-\left(  \mathcal{R}(\llbracket \bm{v}\rrbracket_{T}), 
			\sigma(\bm{u}) \right)
			-(\varepsilon\bm{v},\nabla p)-(\bm{j},\bm{v}) \\
			&=-\left( \nabla\times \sigma(\bm{u}), \bm{v} \right)
			+(\sigma(\bm{u}), \nabla_h\times \bm{v})
			-\left( \mathcal{R}(\llbracket \bm{v}\rrbracket_{T}), 
			\sigma(\bm{u}) \right).
		\end{split}
	\end{equation}
	To treat the last term, we employ the smoothed
	interpolation $\wt{\mc{I}}_h^{\mr{c}}: H(\mr{curl},\Omega)
	\to V_h^{\mr{c}}$ from \eqref{eq:Ih}, i.e.,
	\begin{equation*}
	\begin{split}
		\left(\mathcal{R}(\llbracket \bm{v}\rrbracket_{T}), 
			\sigma(\bm{u}) \right) 
		&= \left(\mathcal{R}(\llbracket \bm{v}\rrbracket_{T}), 
		\wt{\mc{I}}_h^{\mr{c}}\sigma(\bm{u}) \right)
		+\left(\mathcal{R}(\llbracket \bm{v}\rrbracket_{T}), 
			\sigma(\bm{u})-\wt{\mc{I}}_h^{\mr{c}} \sigma(\bm{u}) \right).
	\end{split}
	\end{equation*}
	By using the definition of the lifting operator $\mc{R}$ and
	integration by parts, we infer for $v\in V_h$
	\begin{equation*}
	\begin{split}
		\left(\mathcal{R}(\llbracket \bm{v}\rrbracket_{T}), 
		\wt{\mc{I}}_h^{\mr{c}} \sigma(\bm{u}) \right)
		&= \sum_{F\in \mathcal{F}_h}\int_{F} \llbracket \bm{v}\rrbracket_{T}
		\cdot\llbrace \wt{\mc{I}}_h^{\mr{c}} \sigma(\bm{u})\rrbrace \,\mathrm{d}s \\
		&= -(\nabla\times \wt{\mc{I}}_h^{\mr{c}} \sigma(\bm{u}), \bm{v})
		+(\wt{\mc{I}}_h^{\mr{c}} \sigma(\bm{u}), \nabla_h\times \bm{v}),
	\end{split}
	\end{equation*}
	and obtain
	\begin{equation*}
		\left(\mathcal{R}(\llbracket \bm{v}\rrbracket_{T}), 
		\sigma(\bm{u}) \right) 
		= -(\nabla\times \wt{\mc{I}}_h^{\mr{c}} \sigma(\bm{u}), \bm{v})
		+(\wt{\mc{I}}_h^{\mr{c}} \sigma(\bm{u}), \nabla_h\times \bm{v})
		+(\mathcal{R}(\llbracket \bm{v}\rrbracket_{T}), 
		\sigma(\bm{u})-\wt{\mc{I}}_h^{\mr{c}} \sigma(\bm{u}) ).
	\end{equation*}
	Substituting this identity into
	\eqref{eq:lem:expression 1}, we have 
	\begin{equation}
	\label{eq:lem:expression 2}
	\begin{split}
		\mathcal{R}_1(\bm{u},p;\bm{v},\eta)
		&=-\left( \nabla\times \sigma(\bm{u}) - \nabla\times \wt{\mc{I}}_h^{\mr{c}} \sigma(\bm{u}), \bm{v} \right)
		+ (\sigma(\bm{u})-\wt{\mc{I}}_h^{\mr{c}} \sigma(\bm{u}), \nabla_h\times \bm{v}) \\
		&-\left( \mathcal{R}(\llbracket \bm{v}\rrbracket_{T}), 
		\sigma(\bm{u}) - \wt{\mc{I}}_h^{\mr{c}} \sigma(\bm{u}) \right).
	\end{split}
	\end{equation}

	Let $\bm{v}^{\perp}=\bm{v}-\Pi_h^{\mathrm{c}}\bm{v}$, then
	clearly $\llbracket \bm{v}\rrbracket_{T} = \llbracket
	\bm{v}^{\perp} \rrbracket_{T}$. Since
	$\Pi_h^{\mathrm{c}}\bm{v}\in V_{h0}^{\mathrm{c}}\subset
	V_h$, we know that \eqref{eq:lem:expression 2} also holds
	with $\bm{v}$ replaced by $\Pi_{h}^{\mathrm{c}}\bm{v}$. From
	Lemma \ref{lem:consistency Vhc}, we know
	$\mathcal{R}_1(\bm{u},p; \Pi_h^{\mathrm{c}}\bm{v},\eta) =
	0$, hence 
	\begin{equation*}
	\begin{split}
		\mathcal{R}_1 (\bm{u} & ,p;\bm{v},\eta)
	=\mathcal{R}_1(\bm{u},p;\bm{v}-\Pi_h^{\mathrm{c}}\bm{v},0)	\\
	=&	-( \nabla\times \sigma(\bm{u}) - \nabla\times 
		\wt{\mc{I}}_h^{\mr{c}} \sigma(\bm{u}), \bm{v}^{\perp} ) 
		+ (\sigma(\bm{u})-\wt{\mc{I}}_h^{\mr{c}} 
		\sigma(\bm{u}), \nabla_h\times \bm{v}^{\perp})		\\
		&-( \mathcal{R}(\llbracket \bm{v}^{\perp}\rrbracket_{T}), 
		\sigma(\bm{u}) - \wt{\mc{I}}_h^{\mr{c}} \sigma(\bm{u}) ) \\
	\leq & \sum_{K\in \mc{T}_h} \left( \|\nabla\times \sigma(\bm{u}) 	- \nabla\times \wt{\mc{I}}_h^{\mr{c}} 
		\sigma(\bm{u})\|_{0,K} 	\|\bm{v}^{\perp}\|_{0,K}
		+\|\sigma(\bm{u}) - \wt{\mc{I}}_h^{\mr{c}} \sigma(\bm{u})
		\|_{0,K}\|\nabla\times \bm{v}^{\perp}\|_{0,K} \right) \\
	&	+\sum_{F\in \mc{F}_h} \|\sigma(\bm{u}) - \wt{\mc{I}}_h^{\mr{c}} \sigma(\bm{u})\|_{0,\omega_F} 
		\|\mathcal{R}_F(\llbracket \bm{v}^{\perp}\rrbracket_{T})\|_{0,\omega_F}  	\\
	\leq & C\left(h_K^2 \|\nabla\times \sigma(\bm{u}) - 
		\wt{\mc{I}}_h^{\mr{d}} \nabla\times  \sigma(\bm{u})\|_{0,K}^2
		+ \sum_{K\in \mc{T}_h} \|\sigma(\bm{u}) - 
		\wt{\mc{I}}_h^{\mr{c}} \sigma(\bm{u})\|_{0,K}^2 \right)^{\frac{1}{2}}
		\left( \sum_{F\in \mc{F}_h}\|\mathcal{R}_F(\llbracket \bm{v}\rrbracket_{T})\|_{0,\Omega} \right)^{\frac{1}{2}},
	\end{split}
	\end{equation*}
	where we have used the conforming approximation in Lemma
	\ref{lem:operator Pi 2} and the commuting property (i) of
	Lemma \ref{lem:diagram}. Finally, the estimate for
	$\mc{R}_{1}$ follows directly from Proposition \ref{pop:Ih}.

	\textit{Step 2: Estimate of $\mc{R}_2$}. 
	Similarly, we have for $q\in Q_h$
	\begin{equation}
		\label{eq:lem:expression 3}
		\begin{split}
			\mathcal{R}_2(\bm{u};q)
			&=-(\varepsilon \bm{u}, \nabla_h q)
			+\left( \mathcal{R}(\llbracket q\rrbracket_{N}), 
				\varepsilon \bm{u} 	\right)	\\
			&=-(\varepsilon \bm{u}, \nabla_h q)
			+\left( \mathcal{R}(\llbracket q\rrbracket_{N}), 	
			\wt{\mc{I}}_h^{\mathrm{d}} \varepsilon \bm{u} 	\right)
			+\left( \mathcal{R}(\llbracket q\rrbracket_{N}), 
				\varepsilon \bm{u} - \wt{\mc{I}}_h^{\mathrm{d}} \varepsilon \bm{u} \right).
		\end{split}
	\end{equation}
	Since $\wt{\mc{I}}_h^{\mathrm{d}} \varepsilon \bm{u}$
	belongs to the divergence conforming finite element space
	$\widetilde{V}_h^{\mathrm{d}}$ and
	$\widetilde{V}_h^{\mathrm{d}} \subset V_h$, using the
	definition of lifting operator $\mc{R}$ and integration by
	parts, we obtain
	\begin{equation}
		\label{eq:lem:expression 4}
		\left( \mathcal{R}(\llbracket q\rrbracket_{N}), 	
		\wt{\mc{I}}_h^{\mathrm{d}} \varepsilon \bm{u} 	\right)
		= \int_{\mathcal{F}_h} \llbracket q\rrbracket_{N} \cdot
		\llbrace \wt{\mc{I}}_h^{\mathrm{d}} \varepsilon \bm{u}\rrbrace \,\mathrm{d}s
		= (\nabla\cdot \wt{\mc{I}}_h^{\mathrm{d}} \varepsilon \bm{u}, q )
		+ (\wt{\mc{I}}_h^{\mathrm{d}} \varepsilon \bm{u}, \nabla_h q).
	\end{equation}
	From the commuting property (ii) of Lemma
	\ref{lem:diagram} and $ \nabla\cdot \varepsilon \bm{u}=0$,
	one arrives at
	\begin{equation}
		\label{eq:lem:expression 5}
		\nabla\cdot \wt{\mc{I}}_h^{\mathrm{d}} \varepsilon \bm{u}
		=\wt{\mc{I}}_h^{\mathrm{b}} \nabla\cdot \varepsilon \bm{u} = 0.
	\end{equation}
	Substituting \eqref{eq:lem:expression
	4}-\eqref{eq:lem:expression 5} into \eqref{eq:lem:expression
	3} leads to 
	\begin{equation*}
	\begin{split}
		\mathcal{R}_2(\bm{u};q)
		&=-(\varepsilon \bm{u} - \wt{\mc{I}}_h^{\mathrm{d}} \varepsilon \bm{u}, \nabla_h q)
		+\left( \mathcal{R}(\llbracket q\rrbracket_{N}), 
			\varepsilon \bm{u} - \wt{\mc{I}}_h^{\mathrm{d}} \varepsilon \bm{u} \right) \\
		&\leq C \left( \sum_{K\in \mc{T}_h} 
		\|\varepsilon \bm{u} - \wt{\mc{I}}_h^{\mathrm{d}} \varepsilon \bm{u}\|_{0,K}^2 \right)^{1/2} \|q\|_{Q(h)}, 
	\end{split}
	\end{equation*}
	where the estimate for $\mc{R}_2$ is a direct result of
	Proposition \ref{pop:Ih}.
\end{proof}


Combining Lemma \ref{lem:res est} and Proposition \ref{pop:res
est}, we obtain the following residual estimates.

\begin{corollary}
	\label{cor:res est}
	For $k=0$, let $(\bm{u},p)\in V\times Q$ be the solution of
	\eqref{eq:conForm1}-\eqref{eq:conForm2} with
	\[
		\varepsilon\bm{u},\ \mu^{-1}\nabla\times \bm{u} 
		\in H^s(\mc{T}_h), \ s\geq 0.
	\]
	Then, there hold
	\begin{align*}
		\mathcal{R}_1(\bm{u},p)
		&\leq  C\left( \sum_{K\in \mc{T}_h} h_K^{2\min\{s,\ell\}} \|\mu^{-1}\nabla\times \bm{u}\|_{s,D_K}^2
		+ \chi(s)h_{K}^2\|\nabla\times \mu^{-1}\nabla\times \bm{u}\|_{0,D_K}^2
		\right)^{\frac{1}{2}}, \\
		\mathcal{R}_2(\bm{u})
		&\leq  C\left( \sum_{K\in \mc{T}_h} h_K^{2\min\{s,\ell\}} 
			\|\varepsilon \bm{u}\|_{s,D_K}^2 \right)^{\frac{1}{2}},
	\end{align*}
	where the constant $C>0$ is independent of the mesh size,
	$\chi(s)=1$ if $s\leq \frac{1}{2}$ and $\chi(s)=0$
	otherwise. Note that $D_K$ in the estimates can be replaced
	by $K$ for $s>\frac{1}{2}$.
\end{corollary}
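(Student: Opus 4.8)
The plan is to obtain Corollary~\ref{cor:res est} by stitching together the two regimes already available: the smooth range $s>\frac{1}{2}$ is covered by Lemma~\ref{lem:res est}, and the low-regularity range $0\le s<\frac{1}{2}$ by Proposition~\ref{pop:res est}, after reconciling the superficially different forms in which the two estimates are stated.

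For $s>\frac{1}{2}$, Lemma~\ref{lem:res est} already delivers precisely the asserted bounds: the power $h_K^{2\min\{s,\ell\}}$ matches, the extra term $\chi(s)h_K^{2}\|\nabla\times\mu^{-1}\nabla\times\bm{u}\|_{0,D_K}^{2}$ is absent because $\chi(s)=0$ in this range, and the local norms in Lemma~\ref{lem:res est} are taken over $K$ rather than the macro-element $D_K$; since $K\subseteq D_K$, enlarging $K$ to $D_K$ only weakens the right-hand side, which at the same time justifies the remark that $D_K$ may be replaced by $K$ for $s>\frac{1}{2}$. For $0\le s<\frac{1}{2}$, Proposition~\ref{pop:res est} yields the bounds with $\chi(s)=1$ and the macro-elements $D_K$, and since the N\'{e}d\'{e}lec degree satisfies $\ell\ge1$ one has $\min\{s,\ell\}=s$ on $[0,\frac{1}{2}]$, so the two statements coincide. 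It then remains only to treat the single value $s=\frac{1}{2}$: the derivation of Proposition~\ref{pop:res est} invokes the hypothesis $s<\frac{1}{2}$ exclusively through the approximation estimates of Proposition~\ref{pop:Ih} for $\wt{\mc{I}}_{h}^{\mr{c}}$, $\wt{\mc{I}}_{h}^{\mr{d}}$ and $\wt{\mc{I}}_{h}^{\mr{b}}$, whereas every other ingredient (the identity \eqref{eq:lem:expression 2}, the conforming correction $\bm{v}^{\perp}=\bm{v}-\Pi_h^{\mr{c}}\bm{v}$ together with Lemma~\ref{lem:operator Pi 2}, the commuting diagrams of Lemma~\ref{lem:diagram}, and the Cauchy--Schwarz step) is insensitive to the value of $s$. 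Since the smoothed interpolations are built from the mollifiers $\mc{K}_{\delta}$ of \cite{Ern2016}, which act on merely integrable functions and never use an element-wise trace of their argument, the estimates of Proposition~\ref{pop:Ih} persist at $s=\frac{1}{2}$, and the proof of Proposition~\ref{pop:res est} then applies verbatim there, producing the stated bounds with $\chi(\frac{1}{2})=1$ and domains $D_K$. Collecting the three cases gives the corollary.

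I expect the only genuinely delicate point to be this borderline value $s=\frac{1}{2}$, which is precisely where the element-wise trace embedding $H^{s}(K)\hookrightarrow L^{2}(\partial K)$ fails, so that one cannot fall back on the classical argument behind Lemma~\ref{lem:res est} and must instead exploit the trace-free character of the smoothing-based approximation estimates of Proposition~\ref{pop:Ih}. The remaining steps are bookkeeping: matching the exponent $\min\{s,\ell\}$, the indicator $\chi(s)$, and the domains $D_K$ versus $K$ across the two regimes.
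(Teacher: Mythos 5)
Your proposal takes essentially the same route as the paper: the paper's entire proof of Corollary~\ref{cor:res est} is the one-line observation that Lemma~\ref{lem:res est} covers $s>\frac{1}{2}$ and Proposition~\ref{pop:res est} covers $0\le s<\frac{1}{2}$, with the exponent, the indicator $\chi(s)$ and the enlargement $K\subseteq D_K$ reconciled exactly as you describe. The one place where you go beyond the paper is the borderline value $s=\frac{1}{2}$, which the paper silently omits; there your claim that Proposition~\ref{pop:Ih} ``persists at $s=\frac{1}{2}$'' is not as automatic as you suggest, because the obstruction is not an element-wise trace of the argument but the right-hand side norm itself: the mollification estimates of Propositions~\ref{pop:mol-stab}--\ref{pop:mol-inverse} require $f\in H^{s}(D_K)$ on the macro-element, and for $s<\frac{1}{2}$ this follows from membership in the broken space $H^{s}(\mathcal{T}_h)$, whereas a merely piecewise-$H^{1/2}$ function need not lie in $H^{1/2}(D_K)$ at all, so $\|\cdot\|_{1/2,D_K}$ would have to be reinterpreted as a broken norm and the mollification argument redone. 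Since this borderline case is equally unaddressed in the paper, your proof is on the same footing as the published one; just be aware that the $s=\frac{1}{2}$ patch as written is not yet a complete argument.
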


\subsection{Error estimates}

The framework for the abstract error estimates, combined with
the stability conditions given in Section \ref{subs:auxiliary},
leads to the abstract error estimates for the auxiliary
formulation \eqref{eq:auxiliary1}-\eqref{eq:auxiliary2} with
$k=0$.

\begin{theorem} 
\label{thm:defAbstractError}
	For $k=0$, let $(\bm{u},p)$ be the solution of the weak
	formulation of the time-harmonic Maxwell equations
	\eqref{eq:conForm1}-\eqref{eq:conForm2}, and let
	$(\bm{u}_{h}, \lambda_{h}; p_{h})$ be the solution of
	discontinuous Galerkin discretization
	\eqref{eq:auxiliary1}-\eqref{eq:auxiliary2}. Then, there
	exists a constant $C>0$, independent of mesh size, such that
	\begin{align*}
		\|(\bm{u}-\bm{u}_{h},-\lambda_{h})\|_{W(h)} 
		\leq  C \Big( &\inf_{\bm{v}\in V_{h}}\|\bm{u}-\bm{v}\|_{V(h)}
			+ \inf_{q\in Q_h}\|p-q\|_{Q(h)}\\
			&+ \mathcal{R}_1(\bm{u},p)+ \mathcal{R}_2(\bm{u})
			\Big), \\
		\|p-p_{h}\|_{Q(h)} \leq  C \Big( &
			\inf_{q \in Q_{h}}\|p-q\|_{Q(h)}
			+\|(\bm{u}-\bm{u}_{h},-\lambda_{h})\|_{W(h)}
			+ \mathcal{R}_1(\bm{u},p) \Big).
	\end{align*}
\end{theorem}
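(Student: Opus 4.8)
The plan is to treat this as the abstract error analysis of a saddle-point problem with a consistency (nonconformity) error, in the spirit of the second Strang lemma for mixed methods. Lemmas~\ref{lem:continuity}, \ref{lem:ellipticity} and \ref{lem:inf-sup} place the discrete problem \eqref{eq:auxiliary1}-\eqref{eq:auxiliary2} (with $k=0$) within the Brezzi framework, so that the combined bilinear form
\[
	\mathcal{B}\big((\bm{w},\zeta,r),(\bm{v},\eta,q)\big)
	:= A_h(\bm{w},\zeta;\bm{v},\eta)+B_h(\bm{v},\eta;r)+B_h(\bm{w},\zeta;q)
\]
on $(W_h\times Q_h)^2$ satisfies a global discrete inf-sup condition with a constant $\beta_0>0$ independent of $h$ (the standard consequence of ellipticity on the kernel together with the inf-sup condition, see, e.g., \cite{Boffi2013}). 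Two consistency identities drive the argument: testing \eqref{eq:auxiliary1}-\eqref{eq:auxiliary2} gives $\mathcal{B}\big((\bm{u}_h,\lambda_h,p_h),(\bm{v},\eta,q)\big)=(\bm{j},\bm{v})$, while the definitions of $\mathcal{R}_1$ and $\mathcal{R}_2$ give $(\bm{j},\bm{v})=\mathcal{B}\big((\bm{u},0,p),(\bm{v},\eta,q)\big)-\mathcal{R}_1(\bm{u},p;\bm{v},\eta)-\mathcal{R}_2(\bm{u};q)$ for all $(\bm{v},\eta,q)\in W_h\times Q_h$, where the exact state is extended by a zero auxiliary component, so that $\lambda-\lambda_h=-\lambda_h$.

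For the first estimate I would fix arbitrary $\bm{v}\in V_h$, $q\in Q_h$ and set the fully discrete error $(\bm{w}_h,\zeta_h,r_h):=(\bm{u}_h-\bm{v},\lambda_h,p_h-q)\in W_h\times Q_h$. Applying the combined inf-sup, choose $(\bm{z},\theta,r)\in W_h\times Q_h$ with $\|(\bm{z},\theta)\|_{W(h)}+\|r\|_{Q(h)}=1$ realizing it, expand $\mathcal{B}\big((\bm{w}_h,\zeta_h,r_h),(\bm{z},\theta,r)\big)$ by bilinearity, and substitute the two consistency identities to get
\[
	\mathcal{B}\big((\bm{w}_h,\zeta_h,r_h),(\bm{z},\theta,r)\big)
	=\mathcal{B}\big((\bm{u}-\bm{v},0,p-q),(\bm{z},\theta,r)\big)
	-\mathcal{R}_1(\bm{u},p;\bm{z},\theta)-\mathcal{R}_2(\bm{u};r).
\]
The right-hand side is bounded, via the continuity of $A_h$ and $B_h$ (Lemma~\ref{lem:continuity}) and the definitions of $\mathcal{R}_1(\bm{u},p)$ and $\mathcal{R}_2(\bm{u})$, by $C\big(\|\bm{u}-\bm{v}\|_{V(h)}+\|p-q\|_{Q(h)}\big)+\mathcal{R}_1(\bm{u},p)+\mathcal{R}_2(\bm{u})$. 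Dividing by $\beta_0$, using $\|(\bm{u}-\bm{u}_h,-\lambda_h)\|_{W(h)}\le\|\bm{u}-\bm{v}\|_{V(h)}+\|(\bm{w}_h,\zeta_h)\|_{W(h)}$, and taking the infimum over $\bm{v}\in V_h$ and $q\in Q_h$ (the residual terms being independent of these) yields the first bound.

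For the second estimate, fix $q\in Q_h$ and apply the inf-sup condition of Lemma~\ref{lem:inf-sup} to $p_h-q\in Q_h$: choose $(\bm{v},\eta)\in W_h$ with $\|(\bm{v},\eta)\|_{W(h)}=1$ and $\kappa_B\|p_h-q\|_{Q(h)}\le B_h(\bm{v},\eta;p_h-q)$. Using \eqref{eq:auxiliary1} with $k=0$ to replace $B_h(\bm{v},\eta;p_h)$ by $(\bm{j},\bm{v})-A_h(\bm{u}_h,\lambda_h;\bm{v},\eta)$ and then the $\mathcal{R}_1$-identity to replace $(\bm{j},\bm{v})$, one obtains $B_h(\bm{v},\eta;p_h-q)=A_h(\bm{u}-\bm{u}_h,-\lambda_h;\bm{v},\eta)+B_h(\bm{v},\eta;p-q)-\mathcal{R}_1(\bm{u},p;\bm{v},\eta)$. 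Bounding the three terms by Lemma~\ref{lem:continuity} and the definition of $\mathcal{R}_1(\bm{u},p)$, then using $\|p-p_h\|_{Q(h)}\le\|p-q\|_{Q(h)}+\|p_h-q\|_{Q(h)}$ and taking the infimum over $q\in Q_h$, gives the second bound. Since the analytically substantive ingredients — ellipticity on the kernel, the discrete inf-sup, and the residual estimates — are already established, the remaining work is essentially bookkeeping within the Brezzi framework; the point that needs the most care is deriving (and correctly citing) the combined discrete inf-sup for $\mathcal{B}$ from the three separate structural conditions, together with splitting the consistency error cleanly into the piece $\mathcal{R}_1$ tested against $(\bm{z},\theta)$ and the piece $\mathcal{R}_2$ tested against $r$, while consistently using that the exact auxiliary variable vanishes.
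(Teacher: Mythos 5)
Your proposal is correct and follows essentially the same route as the paper: the paper's proof is a one-line appeal to the standard mixed-FEM (Strang-type) error analysis of \cite[Theorem 6.1]{Houston2004} combined with Lemmas \ref{lem:continuity}--\ref{lem:inf-sup}, and you have simply written out that standard argument in full, with the consistency defect split into $\mathcal{R}_1$ and $\mathcal{R}_2$ exactly as the paper's residual definitions intend. The only (cosmetic) difference is that you package the Brezzi conditions into a single global inf-sup for the combined form $\mathcal{B}$ rather than running the classical two-step kernel-correction argument of the cited reference; both yield the same constants up to equivalence, and your verification of the consistency identities and of the multiplier estimate via the inf-sup of $B_h$ is accurate.
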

\begin{proof}
	The estimates follows from a extension of the standard mixed
	finite theory, see the proof of \cite[Theorem
	6.1]{Houston2004} combined with the stability conditions in
	Lemma \ref{lem:continuity}-\ref{lem:inf-sup}.
\end{proof}

The next results quantifies the best-approximation error.

\begin{theorem}
	\label{thm:bestAppr}
	Suppose that $\bm{u}\in H_{0}(\mathrm{curl},\Omega)$ such
	that $\varepsilon\bm{u}, \mu^{-1}\nabla\times \bm{u} \in
	H^s(\mathcal{T}_h)$ and $p\in Q$ such that $\nabla p\in
	H^{s}(\mathcal{T}_h)$, $s\geq 0$. There exists a constant
	$C>0$ independent of the mesh size such that 
	\begin{gather} 
		\inf_{\bm{v}\in \bm{V}_{h}}\|\bm{u}-\bm{v}\|_{V(h)}
		\leq  C\left(\sum_{K\in \mathcal{T}_h} h_K^{2\min\{s,\ell\}}
		\left(	\|\varepsilon\bm{u}\|_{s,D_K}^2	+
		\|\mu^{-1}\nabla\times\bm{u}\|_{s,D_K}^2 \right) \right)^{1/2},	
		\label{eq:bestAppr 1}\\
		\inf_{q\in Q_h}\|p-q\|_{Q(h)} \leq C \left(\sum_{K\in \mathcal{T}_h} h_K^{2\min\{s,\ell\}} \|p\|_{1+s,D_K}^2 \right)^{1/2}.
		\label{eq:bestAppr 2}
	\end{gather}
\end{theorem}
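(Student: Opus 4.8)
The plan is to estimate the two best-approximation errors separately, in each case splitting the quantity into a volume (broken-$H^1$ / broken-curl) contribution and a set of lifting contributions associated with the jumps, and then bounding the jumps by the approximation error of a \emph{conforming} smoothed interpolant of the exact solution. For \eqref{eq:bestAppr 1} I would pick $\bm v = \wt{\mc I}_{h0}^{\mr c}\bm u \in V_{h0}^{\mr c}\subset V_h$, which is legitimate since $\bm u\in H_0(\mr{curl},\Omega)$, and recall that for a conforming test function $\llbracket \bm v\rrbracket_T=\bm 0$, so $\mc R_F(\llbracket \bm u-\bm v\rrbracket_T)=\mc R_F(\llbracket\bm u\rrbracket_T)=\bm 0$ as well (because $\bm u\in V$). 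Hence
\[
	\|\bm u-\bm v\|_{V(h)}^2 = \|\varepsilon^{1/2}(\bm u-\bm v)\|_{0,\Omega}^2 + \|\mu^{-1/2}\nabla_h\times(\bm u-\bm v)\|_{0,\Omega}^2,
\]
with no face terms surviving. The first summand is controlled elementwise by $\|\varepsilon^{1/2}(\bm u-\wt{\mc I}_{h0}^{\mr c}\bm u)\|_{0,K}$ and the second, using the commuting property (i) of Lemma~\ref{lem:diagram 2}, equals $\|\mu^{-1/2}(\nabla\times\bm u - \wt{\mc I}_{h0}^{\mr d}\nabla\times\bm u)\|_{0,K}$; each is then bounded by Proposition~\ref{pop:Ih0}. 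One technical wrinkle: Proposition~\ref{pop:Ih0} is stated for $s\in[0,\tfrac12)$, while the theorem allows arbitrary $s\ge 0$; for $s>\tfrac12$ I would instead invoke the classical interpolation estimates (the canonical N\'ed\'elec / divergence interpolants applied directly to the now-sufficiently-regular exact solution), which give $h_K^{\min\{s,\ell\}}$ rates with $D_K$ replaceable by $K$. A second wrinkle is the weight $\varepsilon$ (resp.\ $\mu^{-1}$) inside the interpolation error: since these coefficients are piecewise constant with respect to $\mc T_h$, the quantities $\varepsilon\bm u$ and $\mu^{-1}\nabla\times\bm u$ are the natural objects to interpolate, and I would arrange the argument so that it is their broken-$H^s$ norms that appear, matching the statement. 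The curl of $\bm u$ is handled by noting that $\nabla\times(\varepsilon^{-1}(\varepsilon\bm u))$, with $\varepsilon^{-1}$ piecewise constant, and the $\|\nabla\times\bm u\|_{0,D_K}$ term arising in Proposition~\ref{pop:Ih0} can be folded into $\|\mu^{-1}\nabla\times\bm u\|_{s,D_K}$ up to the coefficient bounds \eqref{eq:coefficients} (this uses $h_K\le 1$ for $h$ small and $\min\{s,\ell\}\le 1$ when the curl term is active, i.e.\ $s<1$).

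For \eqref{eq:bestAppr 2} I would take $q = \wt{\mc I}_{h0}^{\mr g} p \in Q_h\cap H_0^1(\Omega)=Q_h^{\mr c}$ — again legitimate because $p\in Q=H^1_0(\Omega)$ — so that $\llbracket q\rrbracket_N=\bm 0$ and, since $p\in Q$ is already continuous, $\llbracket p-q\rrbracket_N=\llbracket p\rrbracket_N=\bm 0$, killing every lifting term in $\|\cdot\|_{Q(h)}$. What remains is $\|\varepsilon^{1/2}\nabla_h(p-q)\|_{0,\Omega}^2 = \sum_K\|\varepsilon^{1/2}(\nabla p - \nabla\wt{\mc I}_{h0}^{\mr g} p)\|_{0,K}^2$; by the commuting property (for the Lagrange/$H^1$ diagram, i.e.\ $\nabla\wt{\mc I}_{h0}^{\mr g}p = \wt{\mc I}_{h0}^{\mr c}\nabla p$) this is $\sum_K\|\varepsilon^{1/2}(\nabla p - \wt{\mc I}_{h0}^{\mr c}\nabla p)\|_{0,K}^2$, which Proposition~\ref{pop:Ih0} (using $\nabla p\in H^s(\mc T_h)$ and $\nabla\times\nabla p=\bm 0$, so the curl term vanishes) bounds by $C\sum_K h_K^{2\min\{s,\ell\}}\|\nabla p\|_{s,D_K}^2\le C\sum_K h_K^{2\min\{s,\ell\}}\|p\|_{1+s,D_K}^2$, and likewise the classical Lagrange interpolation estimate covers $s>\tfrac12$.

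I expect the main obstacle to be bookkeeping rather than conceptual: (i) seamlessly merging the low-regularity regime $s\in[0,\tfrac12)$, where Propositions~\ref{pop:Ih0} and the macro-patches $D_K$ are indispensable, with the regime $s>\tfrac12$, where ordinary interpolation on $K$ suffices, into a single estimate with the stated $\min\{s,\ell\}$ exponents; and (ii) correctly threading the piecewise-constant weights $\varepsilon,\mu^{-1}$ through the commuting diagrams so that it is exactly $\varepsilon\bm u$, $\mu^{-1}\nabla\times\bm u$ and $p$ whose norms appear on the right-hand side — this is where one must be careful that differentiating by $\nabla\times$ commutes with multiplication by the piecewise-constant $\mu^{-1}$ only up to the element partition, which is fine precisely because $\mc T_h$ resolves the coefficients. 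Once these two points are handled, summing the elementwise bounds over $K\in\mc T_h$ and using shape-regularity (finite overlap of the patches $D_K$) yields the global estimates with the asserted constants.
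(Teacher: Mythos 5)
Your argument for \eqref{eq:bestAppr 1} is exactly the paper's: choose $\bm v=\widetilde{\mathcal I}_{h0}^{\mathrm c}\bm u$, observe that all lifting terms vanish by conformity of $\bm u$ and $\bm v$, commute the curl via Lemma~\ref{lem:diagram 2}(i), and conclude with Proposition~\ref{pop:Ih0}. For \eqref{eq:bestAppr 2} the paper simply invokes the classical Cl\'ement/Scott--Zhang estimate, whereas you rebuild it from the smoothed Lagrange interpolant and the gradient commuting diagram; both routes are valid, and your bookkeeping remarks (the $s>\tfrac12$ regime via canonical interpolation, threading the piecewise-constant weights, and absorbing the $h_K\|\nabla\times\bm u\|_{0,D_K}$ term into the $h_K^{\min\{s,\ell\}}$ rate) are consistent with what the paper leaves implicit.
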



\begin{proof}
	By taking $\bm{v}=\widetilde{\mc{I}}_{h0}^{\mr{c}} \bm{u}$,
	which is in $H_{0}(\mathrm{curl},\Omega)$, and using
	commuting property (i) of Lemma \ref{lem:diagram 2}, we
	have
	\begin{equation*}
	\begin{split}
		\inf_{\bm{v}\in V_h}\|\bm{u}-\bm{v}\|_{V(h)}
		&\leq \left( \|\varepsilon^{\frac{1}{2}}(\bm{u}
			- \widetilde{\mc{I}}_{h0}^{\mr{c}}\bm{u})\|_{0,\Omega}^{2}
		+ \|\mu^{-\frac{1}{2}}\nabla\times (\bm{u}
		- \widetilde{\mc{I}}_{h0}^{\mr{c}}\bm{u})\|_{0,\Omega}^{2} \right)^{1/2}\\
		&=\left( \|\varepsilon^{\frac{1}{2}}(\bm{u}
		- \widetilde{\mc{I}}_{h0}^{\mr{c}}\bm{u})\|_{0,\Omega}^{2}
		+ \|\mu^{-\frac{1}{2}}(\nabla\times \bm{u}
		- \widetilde{\mc{I}}_{h0}^{\mr{d}} \nabla\times\bm{u})\|_{0,\Omega}^{2} \right)^{1/2} .
	\end{split}
	\end{equation*}
	Then, \eqref{eq:bestAppr 1} follows from the approximation
	properties given in Proposition \ref{pop:Ih0}.

	We note \eqref{eq:bestAppr 2} is the standard approximation
	result of Cl\'{e}ment interpolation \cite[Section
	5.6.1]{Monk2003}, or the Scott–Zhang interpolation
	\cite[see, e.g.,][]{Scott1990, Brenner2008}.
\end{proof}

Now, we are ready to prove Theorem \ref{thm:defEnergyNorm} in the special case of $k=0$.

\begin{proof}[Proof of Theorem \ref{thm:defEnergyNorm} for $k=0$] 
	From the abstract error estimates in Theorem
	\ref{thm:defAbstractError}, the polynomial approximation
	results \eqref{eq:bestAppr 1}-\eqref{eq:bestAppr 2} and
	the residual estimates in Corollary \ref{cor:res est}, we easily
	derive the a priori error bounds in Theorem
	\ref{thm:defEnergyNorm} for $k=0$.
\end{proof}

\section{Indefinite Maxwell equations}
\label{sec:well-posedness}

We will first discuss existence and
uniqueness properties of the mixed DG method
\eqref{eq:MDG1}-\eqref{eq:MDG2} for the indefinite Maxwell
equations \eqref{eq:Max-1}, and subsequently provide error
estimates for $k\neq 0$, $k^2$ not a Maxwell eigenvalue, under
minimal regularity requirements. Instead of using Fredholm
alternatives to show well-posedness of the mixed DG system
\eqref{eq:MDG1}-\eqref{eq:MDG2}, we shall prove an inf-sup
condition for $A_h$ on the kernel of $B_h$ for $k\neq 0$.

\subsection{Uniform convergence and spectral theory}
\label{sec:spectralTheory}

The spectral properties of the solution
operator are essential for the proof of the existence and
uniqueness of the mixed DG method for the time-harmonic Maxwell
equations. 
Because of the existence of the unique solution $(\bm{u},p)$ to
\eqref{eq:conForm1}-\eqref{eq:conForm2} for the Maxwell
equations with $k=0$ and the a priori estimate given in Lemma
\ref{lem:uniqueness}, we can uniquely define the bounded
solution operators $T:L_{\varepsilon}^{2}(\Omega)^{3}\rightarrow
V$ and $T_p:L_{\varepsilon}^{2}(\Omega)^{3}\rightarrow Q$ for
\eqref{eq:conForm1}-\eqref{eq:conForm2} by 
\begin{equation}
	\label{eq:TandTp}
	T\bm{j}:=\bm{u},\quad T_p\bm{j}:=p.
\end{equation}
Similarly, from the uniqueness of the solution $(\bm{u}_h,p_h)$
to \eqref{eq:MDG1}-\eqref{eq:MDG2} for the Maxwell equations
\eqref{eq:Max} with $k=0$ and the a priori estimate
\eqref{eq:stability}, we can uniquely define the bounded
discrete solution operators
$T_h:L^{2}_{\varepsilon}(\Omega)^{3}\rightarrow V_h$ and
$T_{p,h}:L^{2}_{\varepsilon}(\Omega)^{3}\rightarrow Q_h$ by 
\begin{equation}
	\label{eq:ThandTph}
	T_h\bm{j}:=\bm{u}_h,\quad T_{p,h}\bm{j}:=p_h.
\end{equation}


From the abstract estimates in Theorem
\ref{thm:defAbstractError} together with Corollary \ref{cor:res
est} and the consistency of the finite element spaces
\[
	\lim_{h\to 0}\inf_{\bm{v}\in V_h}\|\bm{u}-\bm{v}\|_{V(h)}=0,\quad 
	\lim_{h\to 0}\inf_{q\in Q_h}\|p-q\|_{Q(h)}=0,\quad \forall \bm{u}
	\in V,\ p\in Q,
\]
we obtain pointwise convergence of $T_h$ to $T$:  for any fixed $\bm{j}\in
L^2(\Omega)^d$, $\|T\bm{j}-T_h\bm{j}\|_{V(h)}\to 0$ as $h\to 0$.
The following proposition states the uniform convergence of
$T_h$, see Appendix C
for a proof.

\begin{proposition}[Uniform convergence]
	\label{pop:uni conv}
	\begin{equation}
		\label{eq:Th convergence}
		\lim_{h\to 0}\|T-T_h\|_{\mc{L}(V_h\to V(h))}=0.
	\end{equation}
\end{proposition}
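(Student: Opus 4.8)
The plan is to reduce the statement, via an $(\cdot,\cdot)_\varepsilon$-orthogonal Helmholtz splitting of the source, to two pieces that can be controlled by results already at hand. First I would record the structural fact that the continuous solution operator $T$ for $k=0$ annihilates $\varepsilon$-gradients: a direct check in \eqref{eq:conForm1}--\eqref{eq:conForm2} shows that the solution with source $\varepsilon\nabla\phi$, $\phi\in H^1_0(\Omega)$, is $(\mathbf 0,-\phi)$, and that the solution with source $\varepsilon\mathbf g_0$, $\mathbf g_0\in H(\mathrm{div}_\varepsilon^0,\Omega)$, has \emph{vanishing pressure}. Given $\mathbf w_h\in V_h$, decompose $\varepsilon^{-1}\mathbf w_h=\mathbf g_0+\nabla\phi$ with $\mathbf g_0\in H(\mathrm{div}_\varepsilon^0,\Omega)$ and $\phi\in H^1_0(\Omega)$ (so $\|\mathbf g_0\|_{0}+\|\nabla\phi\|_{0}\le C\|\mathbf w_h\|_{0}\le C\|\mathbf w_h\|_{V(h)}$). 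Then
\[
  (T-T_h)\mathbf w_h = [\,T(\varepsilon\mathbf g_0)-T_h(\varepsilon\mathbf g_0)\,] - T_h(\varepsilon\nabla\phi),
\]
using $T(\varepsilon\nabla\phi)=\mathbf 0$ and $T(\varepsilon\mathbf g_0)=T\mathbf w_h$.

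For the bracketed term the exact solution is $(T(\varepsilon\mathbf g_0),0)$, whose pressure component is zero and whose field satisfies, by Lemma~\ref{lem:regularity}, $\varepsilon\,T(\varepsilon\mathbf g_0),\ \mu^{-1}\nabla\times T(\varepsilon\mathbf g_0)\in H^{s}(\mathcal T_h)$ and $\nabla\times\mu^{-1}\nabla\times T(\varepsilon\mathbf g_0)\in L^2(\Omega)$ with norms bounded by $C\|\varepsilon\mathbf g_0\|_0\le C\|\mathbf w_h\|_{V(h)}$, for any fixed $s$ with $0<s<\min\{\tau_\varepsilon,\tau_\mu,1\}$. Hence Theorem~\ref{thm:defEnergyNorm} with $k=0$ applies and gives $\|T(\varepsilon\mathbf g_0)-T_h(\varepsilon\mathbf g_0)\|_{V(h)}\le C\,h^{s}\|\mathbf w_h\|_{V(h)}$ (the $\chi(s)h_K^2$ contribution is absorbed since $h_K^2\le h_K^{2s}$ once $h\le1$). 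This piece requires only bookkeeping.

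The remaining term $T_h(\varepsilon\nabla\phi)=(T_h-T)(\varepsilon\nabla\phi)$ is the heart of the proof. Via the abstract error estimate of Theorem~\ref{thm:defAbstractError}, its $V(h)$-norm is bounded by $C\inf_{q\in Q_h}\|\phi-q\|_{Q(h)}$, since the exact solution $(\mathbf 0,-\phi)$ makes the $\mathbf u$-best-approximation term and both residuals vanish ($\phi\in H^1_0$). The crucial point is that, \emph{since the domain here is $V_h$ carrying the norm $\|\cdot\|_{V(h)}$}, this quantity is small compared with $\|\mathbf w_h\|_{V(h)}$: the lifting contributions in $\|\cdot\|_{V(h)}$ scale like $h^{-1}$ times the $L^2$-size on rough discrete gradients, so oscillatory (nearly non-conforming) directions are heavily penalised in the denominator. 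I would make this quantitative using the conforming projection $\Pi_h^{\mathrm c}$ of Lemma~\ref{lem:operator Pi 2}, the compact embedding $W\subset\subset L_\varepsilon^2(\Omega)^d$ from \eqref{eq:Helmholtz}, and the flux balance $\llbracket\varepsilon\nabla\phi\rrbracket_N=\llbracket\mathbf w_h\rrbracket_N$, which holds because $\mathbf w_h-\varepsilon\nabla\phi=\varepsilon\mathbf g_0\in H(\mathrm{div}^0,\Omega)$ and ties the roughness of $\phi$ to normal jumps of $\mathbf w_h$ that are controlled, through inverse estimates, by $\|\mathbf w_h\|_{V(h)}$. Alternatively, one may argue by contradiction: if the statement fails there are $h_n\to0$ and $\mathbf w_n\in V_{h_n}$ with $\|\mathbf w_n\|_{V(h_n)}=1$ and $\|(T-T_{h_n})\mathbf w_n\|_{V(h_n)}\ge\delta>0$; compactness of $T$ (Lemma~\ref{lem:regularity}) yields $T\mathbf w_n\to\mathbf z$ strongly in $H_0(\mathrm{curl},\Omega)$, the conforming projections $\Pi_{h_n}^{\mathrm c}T_{h_n}\mathbf w_n$ are bounded in $H_0(\mathrm{curl},\Omega)$ and, being asymptotically discretely divergence-free, converge strongly in $L_\varepsilon^2$ by discrete compactness, and passing to the limit in the discrete equations together with the already-established pointwise convergence $T_{h_n}\to T$ and the uniform bound \eqref{eq:stability} contradicts $\delta>0$. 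The main obstacle, in either route, is precisely this control of the gradient/non-conforming contribution, i.e. the discrete-compactness estimate; everything else is a combination of Theorem~\ref{thm:defEnergyNorm} ($k=0$), Lemma~\ref{lem:regularity}, and the stability and approximation results of Section~\ref{sec:interpolations}.
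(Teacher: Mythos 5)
Your splitting of the source into a divergence-free part and a gradient is the right instinct, and the first half of your argument (the term $T(\varepsilon\bm g_0)-T_h(\varepsilon\bm g_0)$, handled via Lemma~\ref{lem:regularity} and Theorem~\ref{thm:defEnergyNorm} with vanishing pressure) is sound. The gap is in the second half. Because you use the \emph{continuous} Helmholtz decomposition, the gradient component $\varepsilon\nabla\phi$ is annihilated by $T$ but \emph{not} by $T_h$, and you are left with $T_h(\varepsilon\nabla\phi)$ where $\phi\in H^1_0(\Omega)$ has no regularity beyond $H^1$ and no quantitative link to the mesh. Your bound $\|T_h(\varepsilon\nabla\phi)\|_{V(h)}\le C\inf_{q\in Q_h}\|\phi-q\|_{Q(h)}$ is correct but cannot be made uniformly small: for a mere $H^1_0$ function this infimum carries no rate. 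The mechanisms you invoke to rescue it do not work. The $\|\cdot\|_{V(h)}$ norm of $\bm w_h$ penalises only \emph{tangential} jump liftings and the curl; it contains no control of normal jumps or of divergence-type oscillations, so ``rough gradient directions are heavily penalised in the denominator'' is false for exactly the directions that matter here. The ``flux balance'' $\llbracket\varepsilon\nabla\phi\rrbracket_N=\llbracket\bm w_h\rrbracket_N$ is not even well defined, since $\nabla\phi$ is only $L^2$ and has no trace on mesh faces --- avoiding such traces is the raison d'\^etre of the whole paper. Finally, the contradiction route rests on ``discrete compactness'' of the DG space, which is an unproven (and essentially equivalent) major ingredient, so that argument is circular as stated.

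The paper avoids the problem by decomposing the \emph{discrete} source instead: $V_h=K_h\oplus K_h^{\perp}$ with $K_h=\nabla Q_h^{\mathrm c}$ the space of conforming discrete gradients and $K_h^{\perp}$ as in \eqref{eq:Kh}. For $\bm f_h^0=\nabla q$ with $q\in Q_h^{\mathrm c}$ one checks directly that $(\bm 0,-q)$ solves \emph{both} the continuous and the discrete problem with source $\varepsilon\nabla q$, so $T\bm f_h^0=T_h\bm f_h^0=\bm 0$ and the problematic leftover term simply does not arise. The whole burden then sits on sources in $K_h^{\perp}$, which are discretely divergence-free; there the regularity of Lemma~\ref{lem:regularity} (uniform in $\|\bm j\|_0$), the smallness of the associated exact pressure (obtained from the $\varepsilon$-orthogonality to $\nabla Q_h^{\mathrm c}$), and the error estimate of Theorem~\ref{thm:defEnergyNorm} combine, via \cite[Proposition 4.4]{Buffa2006}, to give a uniform rate. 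If you replace your $\nabla\phi$ by the $\varepsilon$-orthogonal projection of $\bm w_h$ onto $\nabla Q_h^{\mathrm c}$, your first half survives essentially unchanged and the second half disappears.
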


By using \eqref{eq:Th convergence}, one can derive that for any
$z\in \rho(T)$, the resolvent set of $T$, the resolvent operator
$R_z(T_h)=(z-T_{h})^{-1}:V_{h}\rightarrow V_{h}$ exists and is
bounded for all $h$ sufficiently small. In fact, we have the
following lemma \cite[Lemma 1]{Descloux1978}.

\begin{lemma} \label{lem:Rz}
	Let \eqref{eq:Th convergence} hold and let $F \subset \rho(T)$ be
	closed. Then, for all $h$ small enough, there holds
	\[
		\|R_{z}(T_{h})\|_{\mathcal{L}(V_h)}
		=\sup_{\bm{v}\in V_h,\,\|\bm{v}\|_{V(h)}=1}
		\|R_{z}(T_{h})\bm{v}\|_{V(h)}
		\leq C,	\quad \forall\, z \in F,
	\]
	where $C>0$ depending on $F$ is independent of the mesh size
	$h$.
\end{lemma}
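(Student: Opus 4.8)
The plan is to reproduce, in the present non-conforming framework, the classical spectral–approximation argument of Descloux, Nassif and Rappaz. First I would collect the elementary ingredients. By Lemma~\ref{lem:uniqueness} the continuous solution operator $T$ is bounded from $L^{2}_{\varepsilon}(\Omega)^{3}$ into $V$, and since every $\bm{v}\in V=H_{0}(\mathrm{curl},\Omega)$ has vanishing tangential jumps on all faces, the lifting terms in $\|\cdot\|_{V(h)}$ drop out, so that $\|T\bm{w}\|_{V(h)}=\|T\bm{w}\|_{V}\le C_{T}\|\bm{w}\|_{0}$ for all $\bm{w}$, while conversely $\|\bm{w}\|_{0}\le \varepsilon_{*}^{-1/2}\|\bm{w}\|_{V(h)}$. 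Writing $\epsilon_{h}:=\|T-T_{h}\|_{\mathcal{L}(V_{h}\to V(h))}$, Proposition~\ref{pop:uni conv} gives $\epsilon_{h}\to 0$. Finally, $T$ is bounded on $L^{2}_{\varepsilon}$, hence $\sigma(T)$ is a compact subset of $\{|z|\le C_{T}\}$, and since $T(\varepsilon\nabla\phi)=\bm{0}$ for every $\phi\in H^{1}_{0}(\Omega)$ (take $(\bm{u},p)=(\bm{0},-\phi)$ in \eqref{eq:conForm1}–\eqref{eq:conForm2}), $T$ is not injective, so $0\in\sigma(T)$.

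Next I would split $F=F_{0}\cup F_{1}$ with $F_{0}=\{z\in F:|z|\le 2C_{T}\}$ and $F_{1}=F\setminus F_{0}$. On $F_{1}$ a crude estimate suffices: for $\bm{w}\in V_{h}$ one has $z\bm{w}=T_{h}\bm{w}+(z-T_{h})\bm{w}$ with $\|T_{h}\bm{w}\|_{V(h)}\le\|T\bm{w}\|_{V(h)}+\epsilon_{h}\|\bm{w}\|_{V(h)}\le(C_{T}+\epsilon_{h})\|\bm{w}\|_{V(h)}$, so for $h$ small enough that $\epsilon_{h}\le C_{T}/2$ we get $(|z|-\tfrac{3}{2}C_{T})\|\bm{w}\|_{V(h)}\le\|(z-T_{h})\bm{w}\|_{V(h)}$; since $|z|-\tfrac{3}{2}C_{T}\ge C_{T}/2>0$ on $F_{1}$, the operator $z-T_{h}$ is injective on the finite-dimensional $V_{h}$, hence invertible, and $\|R_{z}(T_{h})\|_{\mathcal{L}(V_{h})}\le 2/C_{T}$ for all $z\in F_{1}$. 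It remains to treat the compact set $F_{0}\subset\rho(T)$, for which $\delta_{0}:=\mathrm{dist}(F_{0},\sigma(T))>0$ and, because $0\in\sigma(T)$, $|z|\ge\delta_{0}$ for all $z\in F_{0}$.

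For $F_{0}$ I would argue by contradiction. If the uniform bound failed, there would exist $h_{n}\to 0$, $z_{n}\in F_{0}$ and $\bm{w}_{n}\in V_{h_{n}}$ with $\|\bm{w}_{n}\|_{V(h)}=1$ and $(z_{n}-T_{h_{n}})\bm{w}_{n}\to \bm{0}$ in $V(h)$ (obtained by normalising $R_{z_{n}}(T_{h_{n}})\bm{v}_{n}$; the invertibility of $z_{n}-T_{h_{n}}$ on $V_{h_{n}}$ needed to make sense of this is either part of the hypothesis or falls out of the argument). Passing to a subsequence $z_{n}\to z^{\ast}\in F_{0}\subset\rho(T)$ and using $\|(T_{h_{n}}-T)\bm{w}_{n}\|_{V(h)}\le\epsilon_{h_{n}}\to 0$, the identity $(z^{\ast}-T)\bm{w}_{n}=(z_{n}-T_{h_{n}})\bm{w}_{n}+(T_{h_{n}}-T)\bm{w}_{n}+(z^{\ast}-z_{n})\bm{w}_{n}$ shows $(z^{\ast}-T)\bm{w}_{n}\to\bm{0}$ in $L^{2}_{\varepsilon}(\Omega)^{3}$, whence $\bm{w}_{n}=R_{z^{\ast}}(T)\big[(z^{\ast}-T)\bm{w}_{n}\big]\to\bm{0}$ in $L^{2}_{\varepsilon}$ by boundedness of the resolvent of $T$. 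Then $z_{n}\bm{w}_{n}=T\bm{w}_{n}+(T_{h_{n}}-T)\bm{w}_{n}+(z_{n}-T_{h_{n}})\bm{w}_{n}$, and since $\|T\bm{w}_{n}\|_{V(h)}=\|T\bm{w}_{n}\|_{V}\le C_{T}\|\bm{w}_{n}\|_{0}\to 0$ while the other two terms tend to $0$ in $V(h)$, we get $|z_{n}|\,\|\bm{w}_{n}\|_{V(h)}\to 0$; as $|z_{n}|\ge\delta_{0}>0$ this contradicts $\|\bm{w}_{n}\|_{V(h)}=1$. The main obstacle is exactly this last bootstrap step: the decay one obtains directly lives in the weak $L^{2}_{\varepsilon}$-norm through the resolvent of $T$, and it must be upgraded to the stronger $V(h)$-norm — which is where the smoothing bound $\|T\bm{w}\|_{V(h)}\le C_{T}\|\bm{w}\|_{0}$ from Lemma~\ref{lem:uniqueness} and the uniform convergence $\epsilon_{h}\to 0$ of Proposition~\ref{pop:uni conv} are used together.
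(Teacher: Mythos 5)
Your proof is correct and follows the same route the paper relies on: the paper does not prove this lemma itself but cites it as Lemma~1 of Descloux, Nassif and Rappaz (1978), and your argument is a faithful reconstruction of that classical proof --- a direct bound for $|z|$ large using the smoothing property $\|T\bm{w}\|_{V(h)}\leq C\|\bm{w}\|_{0,\Omega}$ from Lemma~\ref{lem:uniqueness}, and a compactness/contradiction argument on the bounded part of $F$ driven by $\|T-T_h\|_{\mathcal{L}(V_h\to V(h))}\to 0$, with the $L^2$-to-$V(h)$ bootstrap at the end (the use of $R_{z^*}(T)$ on $L^2_{\varepsilon}$ is legitimate because the range of $T$ lies in $W\subset\subset L^2_{\varepsilon}(\Omega)^d$, so $T$ is compact on $L^2_{\varepsilon}$ and its nonzero spectrum is the same set of eigenvalues in either norm). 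The only point to tidy is the one you flag yourself: since existence of $R_{z}(T_{h})$ is part of the conclusion, run the contradiction on the injectivity modulus instead of on normalized resolvent images, i.e.\ assume $h_n\to 0$, $z_n\in F_0$, $\bm{w}_n\in V_{h_n}$ with $\|\bm{w}_n\|_{V(h)}=1$ and $\|(z_n-T_{h_n})\bm{w}_n\|_{V(h)}\to 0$; the resulting uniform lower bound $\|(z-T_h)\bm{w}\|_{V(h)}\geq c\,\|\bm{w}\|_{V(h)}$ yields injectivity, hence invertibility on the finite-dimensional space $V_h$, and the resolvent bound simultaneously.
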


\subsection{Existence and uniqueness}

By using the results of Lemma \ref{lem:Rz}, it is
straightforward to prove Theorem \ref{thm:indefUniqueness}.

\begin{proof}[Proof of Theorem \ref{thm:indefUniqueness}] 
	We prove the uniqueness by proving the a priori bound in the
	theorem.
	It is obvious that there exists a unique element
	$\bm{j}_{h}$ such that 
	\[
		(\bm{j}_{h},\bm{v})_{\varepsilon}=(\bm{j},\bm{v}),
		\quad \forall\, \bm{v}\in V_{h}.
	\]
	Hence, we can rewrite \eqref{eq:MDG1}-\eqref{eq:MDG2} as
	follows:
	\begin{alignat*}{3}
		a_{h}(\bm{u}_{h},\bm{v})+b_{h}(\bm{v},p_{h}) 
		&= (\bm{j}_{h},\bm{v})_{\varepsilon} +k^{2} (\bm{u}_{h}, 
			\bm{v})_{\varepsilon}, \qquad && \forall\bm{v}\in V_h, \\
		b_{h}(\bm{u}_{h},q)-c_{h}(p_{h},q) 
		&= 0, 	&& \forall q\in Q_h.
	\end{alignat*}
	From the definition of the solution operators $T_{h}$ and 
	$T_{p,h}$, we infer that 
	\begin{equation} \label{eq:thm:indefUniqueness-1}
		\bm{u}_{h}=T_{h}\bm{j}_{h}+k^{2}T_{h}\bm{u}_{h}, \quad 
		p_{h}=T_{p,h}\bm{j}_{h}+k^{2}T_{p,h}\bm{u}_{h}. 
	\end{equation}
	With $z:=\frac{1}{k^{2}}$, the first equation becomes
	\[
		(z-T_{h})\bm{u}_{h}=zT_{h}\bm{j}_{h}.
	\]
	Since $k^{2}$ is not a Maxwell eigenvalue by assumption,
	i.e., $z$ is not an eigenvalue of $T$, Lemma \ref{lem:Rz}
	shows that, for $h$ small enough, the	operator
	$R_{z}(T_{h}):V_{h}\rightarrow V_{h}$ exists and is bounded
	uniformly in $h$. Hence, $\bm{u}_{h}$ is uniquely determined
	by 
	\[
		\bm{u}_{h}=z(z-T_{h})^{-1}T_{h}\bm{j}_{h}.
	\]
	From definition of $T_h$ \eqref{eq:ThandTph}, It follows
	that $T_h:L^2_{\varepsilon}(\Omega)^3\to V_h$ is also
	bounded and there exists a constant $C>0$ independent of the
	mesh size such that 
	\begin{equation} \label{eq:uh-bounded}
		\|\bm{u}_{h}\|_{V(h)}\leq C \|R_{z}(T_{h})\| \|T_h\|
			\|\varepsilon^{\frac{1}{2}} \bm{j}_{h}\|_{0,\Omega}
		\leq C \|\bm{j}\|_{0,\Omega}.
	\end{equation}
	The uniqueness of $p_{h}$ directly follows from 
	the uniqueness of $\bm{u}_{h}$, and there exists a constant $C>0$
	independent of mesh size such that 
	\begin{equation} \label{eq:ph-bounded}
		\|p_{h}\|_{Q(h)}\leq C \left( \|\varepsilon^{\frac{1}{2}} 
		\bm{j}_{h}\|_{0,\Omega}+ 
		\|\varepsilon^{\frac{1}{2}}\bm{u}_{h}\|_{0,\Omega} \right){}
		\leq C \|\bm{j}\|_{0,\Omega}.
	\end{equation}
\end{proof}

\subsection{Error estimates}
\label{subs:error est}

For the indefinite Maxwell equations ($k\neq 0$), we can similarly
define the residuals
\begin{align*}
	\mathcal{R}_{1,k}(\bm{u},p;\bm{v},\eta) &:=
		A_h(\bm{u},0;\bm{v},\eta)-k^{2}(\varepsilon\bm{u},\bm{v})
		+B_h(\bm{v},\eta;p)-(\bm{j},\bm{v}),\\
	\mathcal{R}_{2,k}(\bm{u},q) & := B_h(\bm{u},0;q).
\end{align*}
In the same way as in Section \ref{sec:definiteMax}, one can
show that the estimates for the residuals in Proposition
\ref{pop:res est} still hold true.

The next proposition provides the inf-sup condition of $A_{h}$
on the kernel of $B_{h}$ for $k\neq 0$, which is a crucial
ingredient for the error estimates. The idea of the proof is classical \cite[see, e.g.,][]{Melenk1995, Buffa2006}.

\begin{proposition} [Inf-sup condition]
	\label{pop:inf-sup Ah}
	For a mesh size $h$ small enough, there holds  
	\begin{equation}
		\label{eq:inf-sup-1} 
		\inf_{0\neq (\bm{u}_{h},\lambda_{h})\in \mr{Ker}(B_h)}
		\sup_{0\neq (\bm{v},\eta)\in \mr{Ker}(B_h)}
		\frac{A_{h}(\bm{u}_{h},\lambda_{h};\bm{v},\eta) 
		-k^{2}(\bm{u}_{h},\bm{v})_{\varepsilon}}
		{\|(\bm{u}_{h},\lambda_{h})\|_{W(h)}
		\|(\bm{v},\eta)\|_{W(h)}}\geq  \kappa_{A},
	\end{equation}
	for a positive constant $\kappa_{A}$, which depends on the
	coefficients $\mu$ and $\varepsilon$ but independent of $h$.
	Here, $\mr{Ker}(B_h)$ is the kernel of $B_h$ defined in
	\eqref{eq:Zh}.
\end{proposition}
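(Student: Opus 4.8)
The plan is to establish the inf-sup condition \eqref{eq:inf-sup-1} by a compactness/duality argument based on the spectral theory developed in Section~\ref{sec:spectralTheory}, following the classical strategy of \cite{Melenk1995, Buffa2006}. Given $(\bm{u}_h,\lambda_h)\in \mr{Ker}(B_h)$, I would first reduce matters to the $\bm{u}_h$-component: since $(\bm{u}_h,\lambda_h)\in\mr{Ker}(B_h)$ forces $\lambda_h=\llbracket p_h\rrbracket_N$ for the associated $p_h$, and on the kernel the $M_h$-contribution to $A_h$ is governed by the $\gamma_F$-terms, I would argue that $\|\lambda_h\|_{M_h}$ is controlled by $\|\bm{u}_h\|_{V(h)}$, so it suffices to bound $\|\bm{u}_h\|_{V(h)}$ from above by the supremum of $A_h(\bm{u}_h,\lambda_h;\bm{v},\eta)-k^2(\bm{u}_h,\bm{v})_\varepsilon$ over the kernel.

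Next, I would introduce $z=1/k^2$ and rewrite the bilinear form using the discrete solution operator $T_h$: on $\mr{Ker}(B_h)$ the form $A_h(\bm{u}_h,\cdot\,;\cdot)$ essentially realizes the inner product whose inverse is $T_h$, so that the operator associated with $A_h(\cdot,\cdot)-k^2(\cdot,\cdot)_\varepsilon$ is (up to scaling) $z-T_h$ acting on the $V_h$-part of the kernel, equipped with the $|\cdot|_{V(h)}$-norm made a genuine norm via the discrete Friedrichs inequality from Appendix~\ref{lem:discrete Friedrichs}. Since $k^2$ is not a Maxwell eigenvalue, $z\in\rho(T)$, and Lemma~\ref{lem:Rz} (which relies on the uniform convergence Proposition~\ref{pop:uni conv}) guarantees that $R_z(T_h)=(z-T_h)^{-1}$ exists and is bounded uniformly in $h$ for all $h$ small enough. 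This uniform boundedness of the resolvent is exactly a lower bound of the form
\[
	\inf_{0\neq\bm{u}_h}\sup_{0\neq\bm{v}}
	\frac{\bigl((z-T_h)\bm{u}_h,\bm{v}\bigr)_{A_h}}{\|\bm{u}_h\|_{V(h)}\|\bm{v}\|_{V(h)}}\geq c>0,
\]
with $c$ independent of $h$; translating back through the identification of $A_h$ with the relevant discrete inner product and multiplying by $k^2$ yields \eqref{eq:inf-sup-1} with a constant $\kappa_A$ depending only on $\mu$, $\varepsilon$, $k$ and the resolvent bound.

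The main obstacle I anticipate is the careful bookkeeping on the kernel $\mr{Ker}(B_h)$: one must verify that restricting both the trial and test pair to the kernel does not destroy the correspondence with $z-T_h$, i.e.\ that the "extra" $M_h$-directions and the $B_h$-constraint are compatible with the spectral picture built for the pure $V_h$ solution operator. Concretely, one needs that for $(\bm{u}_h,\lambda_h)\in\mr{Ker}(B_h)$ the quantity $A_h(\bm{u}_h,\lambda_h;\bm{v},\eta)$ with $(\bm{v},\eta)\in\mr{Ker}(B_h)$ reproduces the $V_h$-bilinear form $a_h(\bm{u}_h,\bm{v})$ plus a positive definite $\gamma_F$-contribution, and that the map $\bm{u}_h\mapsto(\bm{u}_h,\lambda_h(\bm{u}_h))$ is a bounded bijection from the $V_h$-slice of the kernel onto $\mr{Ker}(B_h)$ with norm equivalence uniform in $h$. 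Once this is in place, the estimate is a direct consequence of Lemma~\ref{lem:Rz} together with the discrete Friedrichs inequality, so the remaining work is largely routine.
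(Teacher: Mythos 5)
Your overall strategy---leveraging the uniform resolvent bound of Lemma~\ref{lem:Rz} on the kernel---is in the same family as the paper's argument, but your write-up contains a genuine error in the very first reduction step. Membership of $(\bm{u}_h,\lambda_h)$ in $\mr{Ker}(B_h)$ does \emph{not} force $\lambda_h=\llbracket p_h\rrbracket_N$ for any associated $p_h$: that identity in Lemma~\ref{lem:same solution} comes from testing the \emph{first} equation \eqref{eq:auxiliary1} with $\bm{v}=0$, not from the constraint $B_h(\bm{u}_h,\lambda_h;q)=0$. The constraint imposes only $\dim Q_h$ linear conditions on $\lambda_h\in M_h$, so the kernel contains pairs $(\bm{0},\lambda_h)$ with $\lambda_h\neq\bm{0}$, and the claimed bound $\|\lambda_h\|_{M_h}\lesssim\|\bm{u}_h\|_{V(h)}$ fails outright. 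Consequently the proposed reduction to ``the $V_h$-part of the kernel'' and the identification of the form with $z-T_h$ acting on that slice cannot be carried out as stated; the extra $M_h$-directions must be retained and are controlled instead by the positive-definite $\gamma_F$-block of $A_h$ (take $\eta=\lambda_h$), which your reduction discards. The subsequent operator identification (``$A_h$ essentially realizes the inner product whose inverse is $T_h$'') is also left at the level of an announced obstacle rather than an argument.

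The paper's proof sidesteps all of this bookkeeping with a direct Schatz-type construction: given $(\bm{v},\eta)\in\mr{Ker}(B_h)$, set $(\bm{u}_h,\lambda_h)=(\bm{v},\eta)+k^2(\tilde{\bm{u}}_h,\tilde{\lambda}_h)$, where $(\tilde{\bm{u}}_h,\tilde{\lambda}_h;\tilde{p}_h)$ solves \eqref{eq:auxiliary1}--\eqref{eq:auxiliary2} with datum $\bm{j}=\varepsilon\bm{v}$. Testing \eqref{eq:auxiliary1} with $(\bm{v},\eta)$ and using $B_h(\bm{v},\eta;\tilde{p}_h)=0$ makes the correction term equal to $k^2(\bm{v},\bm{v})_\varepsilon$, which exactly cancels $-k^2(\bm{v},\bm{v})_\varepsilon$, leaving $A_h(\bm{v},\eta;\bm{v},\eta)\geq\kappa_A'\|(\bm{v},\eta)\|_{W(h)}^2$ by Lemma~\ref{lem:ellipticity}; the bound $\|(\tilde{\bm{u}}_h,\tilde{\lambda}_h)\|_{W(h)}\leq C\|(\bm{v},\eta)\|_{W(h)}$ follows from the stability estimates \eqref{eq:uh-bounded}--\eqref{eq:ph-bounded} (which is where Lemma~\ref{lem:Rz} actually enters), and symmetry of $A_h$ converts the resulting inf-sup into \eqref{eq:inf-sup-1}. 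If you want to salvage your operator-theoretic phrasing, you would need to work on the whole kernel with the $A_h$-inner product and treat the $\lambda$-block separately; as written, the proposal has a gap that would propagate through every later step.
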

\begin{proof}
	For any $(\bm{v},\eta)\in \mr{Ker}(B_h)$, let
	$(\bm{u}_{h},\lambda_{h}) = (\bm{v},\eta) +
	k^{2}(\tilde{\bm{u}}_{h}, \tilde{\lambda}_{h})$ with
	$(\tilde{\bm{u}}_{h}, \tilde{\lambda}_{h};\tilde{p}_{h})$ be
	the solution of DG method
	\eqref{eq:auxiliary1}-\eqref{eq:auxiliary2} with $\bm{j}=
	\varepsilon \bm{v}$. Thus, by using the ellipticity of
	$A_{h}$ on $\mr{Ker}(B_h)$ in Lemma \ref{lem:ellipticity}
	and \eqref{eq:auxiliary1}, we obtain
	\begin{align*}
		A_{h}(\bm{u}_{h}  ,\lambda_{h};\bm{v},\eta) - k^{2}(\bm{u}_{h}, 
		\bm{v})_{\varepsilon}
		&=A_{h}(\bm{v},\eta;\bm{v},\eta)
			-k^{2}(\bm{v},\bm{v})_{\varepsilon}  
			+k^{2}\left( A_{h}(\tilde{\bm{u}}_{h},\tilde{\lambda}_{h};
			\bm{v},\eta)
			- k^{2}(\tilde{\bm{u}}_{h}, \bm{v})_{\varepsilon} \right)\\
		&= A_{h}(\bm{v},\eta;\bm{v},\eta) 
		\geq \kappa_{A}'\|(\bm{v},\eta)\|_{W(h)}^{2}.
	\end{align*}
	From Lemma \ref{lem:same solution}, we know
	$\tilde{\lambda}_{h} = \llbracket \tilde{p}_{h} \rrbracket_{N}$. Hence, by
	using the stability estimates
	\eqref{eq:uh-bounded}-\eqref{eq:ph-bounded}, we have
	\begin{equation*}
	\begin{split}
		\|(\tilde{\bm{u}}_{h},\tilde{\lambda}_{h})\|_{W(h)} 
		&=
		\|\tilde{\bm{u}}_{h}\|_{V(h)} + \|\tilde{\lambda}_{h} 
		\|_{M_{h}} \\
		&\leq \|\tilde{\bm{u}}_{h}\|_{V(h)} +\|\tilde{p}_{h}\|_{Q(h)} \\
		& \leq C \|\varepsilon^{\frac{1}{2}}\bm{v}\|_{0,\Omega}
		\leq C\|(\bm{v},\eta)\|_{W(h)}.
	\end{split}
	\end{equation*}
	Now, we conclude from the above two inequalities
	\[
		\inf_{0\neq (\bm{v},\eta)\in \mr{Ker}(B_h)}
		\sup_{0\neq (\bm{u}_{h},\lambda_{h})\in \mr{Ker}(B_h)}
		\frac{A_{h}(\bm{u}_{h},\lambda_{h};\bm{v},\eta) 
		-k^{2}(\bm{u}_{h},\bm{v})_{\varepsilon}}
		{\|(\bm{u}_{h},\lambda_{h})\|_{W(h)}
		\|(\bm{v},\eta)\|_{W(h)}}\geq \kappa_{A},
	\]
	which is equivalent to \eqref{eq:inf-sup-1} since
	$A_{h}(\cdot ,\cdot )$ is symmetric.
\end{proof}

Now, we are ready to prove Theorem \ref{thm:defEnergyNorm}.

\begin{proof} [Proof of Theorem \ref{thm:defEnergyNorm}] 
	Since $k^2$ is not a Maxwell eigenvalue, the inf-sup
	condition \eqref{eq:inf-sup-1} holds true for all $h$ small
	enough. Together with the inf-sup condition of $B_h$
	\eqref{eq:inf-sup}, one can get the same abstract error
	estimates stated in Theorem \ref{thm:defAbstractError} for
	$k=0$ also for the indefinite time-harmonic Maxwell equations
	\eqref{eq:Max} with $k^2$ not a Maxwell eigenvalue. Thus,
	the a priori error bound follows directly from the
	polynomial approximation results \eqref{eq:bestAppr
	1}-\eqref{eq:bestAppr 2} and residual estimates in Corollary
	\ref{cor:res est}.
\end{proof}

\section*{Acknowledgements}

The research of K. Liu is funded by a fellowship from China
Scholarship Council (No. 201806180078), which is gratefully
acknowledged.

\bibliographystyle{IMANUM-BIB}
\bibliography{IMANUM-refs}

\begin{thebibliography}{}

\bibitem[Arnold {\em et~al.}(2002)Arnold, Brezzi, Cockburn, \&
  Marini]{Arnold2002}
{\sc Arnold, D.~N., Brezzi, F., Cockburn, B. \& Marini, L.~D.} (2002)
\newblock {Unified analysis of discontinuous Galerkin methods for elliptic
  problems}.
\newblock {\em SIAM J. Numer. Anal.}, {\bf 39}, 1749--1779.

\bibitem[Arnold {\em et~al.}(2006)Arnold, Falk, \& Winther]{Arnold2006}
{\sc Arnold, D.~N., Falk, R.~S. \& Winther, R.} (2006)
\newblock {Finite element exterior calculus, homological techniques, and
  applications}.
\newblock {\em Acta Numer.}, {\bf 15}, 1--155.

\bibitem[Boffi {\em et~al.}(2013)Boffi, Brezzi, \& Fortin]{Boffi2013}
{\sc Boffi, D., Brezzi, F. \& Fortin, M.} (2013)
\newblock {\em {Mixed Finite Element Methods and Applications}\/},  vol.~44.
\newblock Berlin: Springer-Verlag, pp. xiv--685.

\bibitem[Bonito {\em et~al.}(2013)Bonito, Guermond, \& Luddens]{Bonito2013a}
{\sc Bonito, A., Guermond, J.-L. \& Luddens, F.} (2013)
\newblock {Regularity of the Maxwell equations in heterogeneous media and
  Lipschitz domains}.
\newblock {\em J. Math. Anal. Appl.}, {\bf 408}, 498--512.

\bibitem[Bonito {\em et~al.}(2016)Bonito, Guermond, \& Luddens]{Bonito2016}
{\sc Bonito, A., Guermond, J.-L. \& Luddens, F.} (2016)
\newblock {An interior penalty method with $C^0$ finite elements for the
  approximation of the Maxwell equations in heterogeneous media: convergence
  analysis with minimal regularity}.
\newblock {\em ESAIM Math. Model. Numer. Anal.}, {\bf 50}, 1457--1489.

\bibitem[Brenner \& Scott(2008)Brenner \& Scott]{Brenner2008}
{\sc Brenner, S.~C. \& Scott, L.~R.} (2008)
\newblock {\em {The Mathematical Theory of Finite Element Methods}\/}, third
  edn.
\newblock New York: Springer.

\bibitem[Brezzi {\em et~al.}(2000)Brezzi, Manzini, Marini, Pietra, \&
  Russo]{Brezzi2000}
{\sc Brezzi, F., Manzini, G., Marini, D., Pietra, P. \& Russo, A.} (2000)
\newblock {Discontinuous Galerkin approximations for elliptic problems}.
\newblock {\em Numer. Methods Partial Differ. Equations An Int. J.}, {\bf 16},
  365--378.

\bibitem[Buffa \& Perugia(2006)Buffa \& Perugia]{Buffa2006}
{\sc Buffa, A. \& Perugia, I.} (2006)
\newblock {Discontinuous Galerkin approximation of the Maxwell eigenproblem}.
\newblock {\em SIAM J. Numer. Anal.}, {\bf 44}, 2198--2226.

\bibitem[Christiansen \& Winther(2008)Christiansen \&
  Winther]{Christiansen2008}
{\sc Christiansen, S.~H. \& Winther, R.} (2008)
\newblock {Smoothed projections in finite element exterior calculus}.
\newblock {\em Math. Comput.}, {\bf 77}, 813--829.

\bibitem[Ciarlet(2016)Ciarlet]{Ciarlet2016}
{\sc Ciarlet, P.} (2016)
\newblock {On the approximation of electromagnetic fields by edge finite
  elements. Part 1: Sharp interpolation results for low-regularity fields}.
\newblock {\em Comput. Math. with Appl.}, {\bf 71}, 85--104.

\bibitem[Costabel {\em et~al.}(1999)Costabel, Dauge, \& Nicaise]{Costabel1999}
{\sc Costabel, M., Dauge, M. \& Nicaise, S.} (1999)
\newblock {Singularities of Maxwell interface problems}.
\newblock {\em ESAIM Math. Model. Numer. Anal.}, {\bf 33}, 627--649.

\bibitem[Demkowicz(2003)Demkowicz]{Demkowicz2003}
{\sc Demkowicz, L.} (2003)
\newblock {hp-Adaptive Finite Elements for Time-Harmonic Maxwell Equations}.
\newblock {\em Top. Comput. Wave Propag. Direct Inverse Probl.} (M.~Ainsworth,
  P.~Davies, D.~Duncan, B.~Rynne \& P.~Martin eds).
\newblock Springer Berlin Heidelberg, pp. 163--199.

\bibitem[Descloux {\em et~al.}(1978)Descloux, Nassif, \& Rappaz]{Descloux1978}
{\sc Descloux, J., Nassif, N. \& Rappaz, J.} (1978)
\newblock {On spectral approximation. Part 1. The problem of convergence}.
\newblock {\em RAIRO. Anal. num{\'{e}}rique\/}, {\bf 12}, 97--112.

\bibitem[{Di Pietro} \& Ern(2012){Di Pietro} \& Ern]{Ern2012}
{\sc {Di Pietro}, D.~A. \& Ern, A.} (2012)
\newblock {\em {Mathematical Aspects of Discontinuous Galerkin Methods}\/}.
  Math{\'{e}}matiques et Applications,  vol.~69.
\newblock Berlin, Heidelberg: Springer Berlin Heidelberg.

\bibitem[Ern \& Guermond(2004)Ern \& Guermond]{Ern2004}
{\sc Ern, A. \& Guermond, J.-L.} (2004)
\newblock {\em {Theory and Practice of Finite Elements}\/}. Applied
  Mathematical Sciences,  vol. 159.
\newblock New York, NY: Springer New York, pp. xiv+524.

\bibitem[Ern \& Guermond(2016)Ern \& Guermond]{Ern2016}
{\sc Ern, A. \& Guermond, J.-L.} (2016)
\newblock {Mollification in strongly Lipschitz domains with application to
  continuous and discrete de Rham complexes}.
\newblock {\em Comput. Methods Appl. Math.}, {\bf 16}, 51--75.

\bibitem[Ern \& Guermond(2017)Ern \& Guermond]{Ern2017}
{\sc Ern, A. \& Guermond, J.~L.} (2017)
\newblock {Finite element quasi-interpolation and best approximation}.
\newblock {\em ESAIM Math. Model. Numer. Anal.}, {\bf 51}, 1367--1385.

\bibitem[Ern \& Guermond(2018)Ern \& Guermond]{Ern2017b}
{\sc Ern, A. \& Guermond, J.-L.} (2018)
\newblock {Analysis of the edge finite element approximation of the Maxwell
  equations with low regularity solutions}.
\newblock {\em Comput. Math. with Appl.}, {\bf 75}, 918--932.

\bibitem[Ern \& Guermond(2019)Ern \& Guermond]{Ern2019}
{\sc Ern, A. \& Guermond, J.-L.} (2019)
\newblock {Quasi-optimal nonconforming approximation of elliptic PDEs with
  contrasted coefficients and minimal regularity}.

\bibitem[Falk \& Winther(2014)Falk \& Winther]{Falk2014}
{\sc Falk, R.~S. \& Winther, R.} (2014)
\newblock {Local bounded cochain projections}.
\newblock {\em Math. Comput.}, {\bf 83}, 2631--2656.

\bibitem[Gedney(2011)Gedney]{Gedney2011}
{\sc Gedney, S.~D.} (2011)
\newblock {Introduction to the Finite-Difference Time-Domain (FDTD) method for
  electromagnetics}.
\newblock {\em Synth. Lect. Comput. Electromagn.}, {\bf 27}, 1--250.

\bibitem[Gudi(2010)Gudi]{Gudi2010}
{\sc Gudi, T.} (2010)
\newblock {A new error analysis for discontinuous finite element methods for
  linear elliptic problems}.
\newblock {\em Math. Comput.}, {\bf 79}, 2169--2169.

\bibitem[Houston {\em et~al.}(2004)Houston, Perugia, \&
  Sch{\"{o}}tzau]{Houston2004}
{\sc Houston, P., Perugia, I. \& Sch{\"{o}}tzau, D.} (2004)
\newblock {Mixed discontinuous Galerkin approximation of the Maxwell operator}.
\newblock {\em SIAM J. Numer. Anal.}, {\bf 42}, 434--459.

\bibitem[Houston {\em et~al.}(2005a)Houston, Perugia, Schneebeli, \&
  Sch{\"{o}}tzau]{Houston2005}
{\sc Houston, P., Perugia, I., Schneebeli, A. \& Sch{\"{o}}tzau, D.} (2005a)
\newblock {Interior penalty method for the indefinite time-harmonic Maxwell
  equations}.
\newblock {\em Numer. Math.}, {\bf 100}, 485--518.

\bibitem[Houston {\em et~al.}(2005b)Houston, Perugia, \&
  Sch{\"{o}}tzau]{Houston2005b}
{\sc Houston, P., Perugia, I. \& Sch{\"{o}}tzau, D.} (2005b)
\newblock {Mixed Discontinuous Galerkin Approximation of the Maxwell Operator:
  Non-Stabilized Formulation}.
\newblock {\em J. Sci. Comput.}, {\bf 22-23}, 315--346.

\bibitem[Houston {\em et~al.}(2005c)Houston, Perugia, Schneebeli, \&
  Sch{\"{o}}tzau]{Houston2005a}
{\sc Houston, P., Perugia, I., Schneebeli, A. \& Sch{\"{o}}tzau, D.} (2005c)
\newblock {Mixed discontinuous Galerkin approximation of the Maxwell operator:
  The indefinite case}.
\newblock {\em ESAIM Math. Model. Numer. Anal.}, {\bf 39}, 727--753.

\bibitem[Jovanovi{\'{c}} \& S{\"{u}}li(2014)Jovanovi{\'{c}} \&
  S{\"{u}}li]{Jovanovic2014}
{\sc Jovanovi{\'{c}}, B.~S. \& S{\"{u}}li, E.} (2014)
\newblock {\em {Analysis of Finite Difference Schemes}\/}. Springer Series in
  Computational Mathematics,  vol.~46.
\newblock London: Springer London, p.~58.

\bibitem[Lu {\em et~al.}(2017)Lu, Cesmelioglu, {Van der Vegt}, \& Xu]{Lu2017}
{\sc Lu, Z., Cesmelioglu, A., {Van der Vegt}, J. J.~W. \& Xu, Y.} (2017)
\newblock {Discontinuous Galerkin approximations for computing electromagnetic
  Bloch modes in photonic crystals}.
\newblock {\em J. Sci. Comput.}, {\bf 70}, 922--964.

\bibitem[Melenk(1995)Melenk]{Melenk1995}
{\sc Melenk, J.~M.} (1995)
\newblock {On generalized finite element methods}.
\newblock {\em Ph.D. thesis}, University of maryland.

\bibitem[Monk(2003)Monk]{Monk2003}
{\sc Monk, P.} (2003)
\newblock {\em {Finite Element Methods for Maxwell's Equations}\/}.
\newblock Oxford: Oxford University Press, p. 464.

\bibitem[Nicolaides(2004)Nicolaides]{Nicolaides04}
{\sc Nicolaides, R.} (2004)
\newblock A method for complex geometries in finite-difference solutions of
  {M}axwell's equations.
\newblock {\em Comput. Math. Appl.}, {\bf 48}, 1111--1119.

\bibitem[Perugia {\em et~al.}(2002)Perugia, Sch{\"{o}}tzau, \&
  Monk]{Perugia2002}
{\sc Perugia, I., Sch{\"{o}}tzau, D. \& Monk, P.} (2002)
\newblock {Stabilized interior penalty methods for the time-harmonic Maxwell
  equations}.
\newblock {\em Comput. Methods Appl. Mech. Eng.}, {\bf 191}, 4675--4697.

\bibitem[Perugia \& Sch{\"{o}}tzau(2003)Perugia \& Sch{\"{o}}tzau]{Perugia2003}
{\sc Perugia, I. \& Sch{\"{o}}tzau, D.} (2003)
\newblock {The $hp$-local discontinuous Galerkin method for low-frequency
  time-harmonic Maxwell equations}.
\newblock {\em Math. Comput.}, {\bf 72}, 1179--1215.

\bibitem[S{\'{a}}rm{\'{a}}ny {\em et~al.}(2010)S{\'{a}}rm{\'{a}}ny,
  Izs{\'{a}}k, \& {Van Der Vegt}]{Sarmany2010}
{\sc S{\'{a}}rm{\'{a}}ny, D., Izs{\'{a}}k, F. \& {Van Der Vegt}, J. J.~W.}
  (2010)
\newblock {Optimal penalty parameters for symmetric discontinuous Galerkin
  discretisations of the time-harmonic maxwell equations}.
\newblock {\em J. Sci. Comput.}, {\bf 44}, 219--254.

\bibitem[Sch{\"{o}}berl(2001)Sch{\"{o}}berl]{Schoberl2001}
{\sc Sch{\"{o}}berl, J.} (2001)
\newblock {Commuting Quasi-interpolation operators for mixed finite elements}.
\newblock {\em 2nd Eur. Conf. Comput. Mech.}
\newblock , pp. 854--855.

\bibitem[Sch{\"{o}}berl(2008)Sch{\"{o}}berl]{Schoberl2008}
{\sc Sch{\"{o}}berl, J.} (2008)
\newblock {A posteriori error estimates for Maxwell equations}.
\newblock {\em Math. Comput.}, {\bf 77}, 633--650.

\bibitem[Scott \& Zhang(1990)Scott \& Zhang]{Scott1990}
{\sc Scott, L.~R. \& Zhang, S.} (1990)
\newblock {Finite element interpolation of nonsmooth functions satisfying
  boundary conditions}.
\newblock {\em Math. Comput.}, {\bf 54}, 483--493.

\bibitem[Taflove \& Hagness(2005)Taflove \& Hagness]{Taflove2005}
{\sc Taflove, A. \& Hagness, S.~C.} (2005)
\newblock {\em {Computational electrodynamics: the finite-difference
  time-domain method}\/}, third edn.
\newblock Artech house.

\bibitem[Tartar(2007)Tartar]{Tartar2007}
{\sc Tartar, L.} (2007)
\newblock {\em {An Introduction to Sobolev Spaces and Interpolation Spaces}\/}.
  Lecture Notes of the Unione Matematica Italiana,  vol.~3.
\newblock Berlin, Heidelberg: Springer Berlin Heidelberg.

\end{thebibliography}


\appendix

\section*{Appendix A.\ Smoothed interpolation approximation} 
\label{app:smoothed}

In this section, we shall prove the approximation result stated
in Proposition \ref{pop:Ih}. The basic idea is to use some key
local properties of the mollification $\mc{K}_{\delta}$ without
boundary condition constraints \cite[see][Section 3]{Ern2016},
and the properties of the canonical finite element interpolation
operators.

\subsection*{A.1.\ Local properties of the mollification}

In this section, we restate some local properties of the
mollification, which can be deduced easily from the global
version \cite[see][]{Ern2016}.

To make things more clear, we assume $\delta\in (0,1)$. We only
present the local properties for $\mc{K}_{\delta}^{\mr{g}}$
since the others $ \mc{K}_{\delta}^{\mr{c}},
\mc{K}_{\delta}^{\mr{d}}, \mc{K}_{\delta}^{\mr{b}} $ can be
proven similarly. First, we introduce \cite[(3.4a)]{Ern2016}
\begin{equation*}
	(\mathcal{K}_{\delta}^{\mr{g}}f)(\bm{x}) 
	=\int_{B(\bm{0},1)} \rho(\bm{y}) 
	f(\varphi_{\delta}(\bm{x})+\delta r\bm{y}) \,\mathrm{d}\bm{y}.
\end{equation*}
Here, $\varphi_{\delta}$ is the shrinking mapping introduced in
\cite[(2.1)]{Ern2016}, and $r>0$ such that
$\varphi_{\delta}(\Omega)+B(0,\delta r)\subset \Omega$ for all
$\delta\in (0,1)$. We denote by $\mb{J}_{\delta}(\bm{x})$ the Jacobian of $\varphi_{\delta}(\bm{x})$, which is known to converge uniformly to the identity.

\begin{proposition} [Local boundedness]
	\label{pop:mol-stab} 
	For any $K\in \mathcal{T}_h$, there is a $c>0$ such that 
	\begin{equation}
		\label{eq:mol-stab}
		\|\mathcal{K}_\delta f\|_{s,K}\leq c \|f\|_{s,D_K},
	\end{equation}
	for all $f\in H^s(D_K)$, $\delta\in
	(0,\delta_0]$, and $s\in [0,1]$. Here, $D_K =
	\mathrm{int} \left(\cup_{\bar{K}^{\prime}\cap \bar{K}\neq
	\emptyset}\bar{K}^{\prime} \right)$.
\end{proposition}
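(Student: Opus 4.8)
The plan is to deduce the local bound \eqref{eq:mol-stab} from the corresponding global mapping properties of $\mathcal{K}_\delta$ established in \cite[Section 3]{Ern2016}, exploiting the fact that the mollified value $(\mathcal{K}_\delta f)(\bm{x})$ depends on $f$ only through its values in a small neighbourhood of $\varphi_\delta(\bm{x})$. Concretely, I would first fix $K\in\mathcal{T}_h$ and recall from \eqref{eq:delta def}--\eqref{eq:gh} that $\delta(\bm{x})\le\varrho c''h_{K'}$ on each $K'$ meeting $K$, so for $\varrho$ small (and using shape regularity) the set $\varphi_\delta(K)+B(\bm{0},\delta r)$ is contained in $D_K=\mathrm{int}(\cup_{\bar{K}'\cap\bar{K}\neq\emptyset}\bar{K}')$. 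Hence in the integral representation
\[
	(\mathcal{K}_{\delta}^{\mr{g}}f)(\bm{x})
	=\int_{B(\bm{0},1)} \rho(\bm{y})
	f(\varphi_{\delta}(\bm{x})+\delta r\bm{y}) \,\mathrm{d}\bm{y},
	\qquad \bm{x}\in K,
\]
only the values $f|_{D_K}$ enter; thus we may replace $f$ by $f\,\mathbf{1}_{D_K}$ (or an $H^s$-stable extension of $f|_{D_K}$) without changing $\mathcal{K}_\delta f$ on $K$, and it suffices to bound $\|\mathcal{K}_\delta(f\mathbf{1}_{D_K})\|_{s,K}\le\|\mathcal{K}_\delta(f\mathbf{1}_{D_K})\|_{s,\Omega}$ by the global continuity of $\mathcal{K}_\delta$ on $H^s$.

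Next I would handle the two endpoint cases and then interpolate. For $s=0$, the $L^2(K)$ bound follows from Jensen/Young's inequality applied to the convolution-type kernel together with the change of variables $\bm{x}\mapsto\varphi_\delta(\bm{x})$, whose Jacobian $\mathbb{J}_\delta$ is uniformly bounded above and below for $\delta\in(0,\delta_0]$; this gives $\|\mathcal{K}_\delta f\|_{0,K}\le c\|f\|_{0,D_K}$ with $c$ independent of $h$ and $\delta$. For $s=1$, one differentiates under the integral sign: $\nabla(\mathcal{K}_\delta^{\mr g}f)(\bm{x})=\mathbb{J}_\delta(\bm{x})^{\!\top}\int_{B(\bm0,1)}\rho(\bm{y})(\nabla f)(\varphi_\delta(\bm{x})+\delta r\bm{y})\,\mathrm{d}\bm{y}$, and the same $L^2$ argument applied to $\nabla f$, combined with the uniform bound on $\mathbb{J}_\delta$, yields $\|\mathcal{K}_\delta f\|_{1,K}\le c\|f\|_{1,D_K}$. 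Finally, since the localization map $f\mapsto f|_{D_K}$ and the operator $\mathcal{K}_\delta$ (viewed as a map into $H^s(K)$) are both compatible with real interpolation, the bound for $s\in(0,1)$ follows by interpolating between the $s=0$ and $s=1$ estimates, with an interpolation constant that is uniform in $K$ by shape regularity.

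The main obstacle is making the geometric containment $\varphi_\delta(K)+B(\bm{0},\delta r)\subset D_K$ rigorous and uniform in $h$: the shrinking map $\varphi_\delta$ is defined globally relative to $\Omega$ and is not simply a rescaling about $K$, so one must combine the estimate $|\varphi_\delta(\bm{x})-\bm{x}|\lesssim\delta(\bm{x})\lesssim \varrho h_{K'}$ with the mesh-size comparability $h_{K'}\simeq h_K$ on the patch $D_K$ (from shape regularity) to guarantee that the mollification stencil over $K$ never reaches outside $D_K$, at least after possibly shrinking $\varrho$; a secondary technical point is ensuring the $H^s$-stable extension of $f|_{D_K}$ (needed to invoke the global continuity of $\mathcal{K}_\delta$ for non-integer $s$) has a norm controlled by $\|f\|_{s,D_K}$ with constant independent of $h$, which again follows from shape regularity via a scaling argument on the reference patch. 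Once these geometric and extension facts are in place, the analytic estimates are routine consequences of the properties of $\mathcal{K}_\delta$ already recorded in \cite[Section 3]{Ern2016}.
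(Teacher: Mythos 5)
Your proposal is correct and follows essentially the same route as the paper, which simply states that the result "follows from similar arguments as in the proof of the global version" \citep[Theorem 3.3]{Ern2016}: your localization via the containment of the mollification stencil over $K$ in $D_K$, the endpoint estimates at $s=0,1$ by change of variables and differentiation under the integral, and the final real-interpolation step are exactly the ingredients the paper relies on (and spells out explicitly only for the companion inverse estimate, Proposition \ref{pop:mol-inverse}). Your explicit attention to the uniformity of the geometric containment and of the interpolation constants under shape regularity fills in details the paper leaves implicit, but does not change the argument.
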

\begin{proof}
	The result follows from similar arguments as in the proof of
	the global version \cite[Theorem 3.3]{Ern2016}.
\end{proof}

\begin{proposition}[Local convergence]
	\label{pop:mol-convergence} 
	For any $K\in \mathcal{T}_h$, there is a $c>0$ such that 
	\begin{equation}
		\label{eq:mol-conv}
		\|\mathcal{K}_\delta f-f\|_{s,K} 
		\leq c \delta^{t-s} \|f\|_{t,D_K},
	\end{equation}
	for all $f\in H^t(D_K)$, $\delta\in
	(0,\delta_0]$, and all $0\leq s\leq t\leq 1$.
\end{proposition}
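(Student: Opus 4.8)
The plan is to deduce the local convergence estimate \eqref{eq:mol-conv} from the corresponding global result in \cite[Theorem 3.5 (or the analogous statement)]{Ern2016} together with the local boundedness established in Proposition \ref{pop:mol-stab}. First I would recall the structure of the mollification: $(\mathcal{K}_\delta f)(\bm{x})$ is an average of translated and shrunk values of $f$ over the ball $B(\bm{0},\delta r)$ after composing with $\varphi_\delta$, and the support of the averaging kernel guarantees that $(\mathcal{K}_\delta f)(\bm{x})$ for $\bm{x}\in K$ only depends on values of $f$ in a $c\delta$-neighbourhood of $\varphi_\delta(K)$, which for $\delta\leq\delta_0$ small enough is contained in the macro-element $D_K$. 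This locality is the key structural fact and it is exactly what underlies Proposition \ref{pop:mol-stab}.

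Next I would prove the endpoint cases $s=t$ and $(s,t)=(0,1)$ and then interpolate. For $s=t=0$ the bound $\|\mathcal{K}_\delta f-f\|_{0,K}\leq c\|f\|_{0,D_K}$ is immediate from Proposition \ref{pop:mol-stab} and the triangle inequality; to get the genuine convergence factor $\delta^{t-s}$ I would instead argue as follows. For $f\in H^1(D_K)$, write $f(\varphi_\delta(\bm{x})+\delta r\bm{y})-f(\bm{x})$ as an integral of $\nabla f$ along the segment joining $\bm{x}$ to $\varphi_\delta(\bm{x})+\delta r\bm{y}$; since $|\varphi_\delta(\bm{x})-\bm{x}|\leq c\delta$ uniformly (because $\mb{J}_\delta\to I$ uniformly and $\varphi_\delta\to\mathrm{id}$) and $|\delta r\bm{y}|\leq\delta r$, the displacement is $O(\delta)$, so integrating over $B(\bm{0},1)$ against $\rho$ and using Fubini together with Minkowski's integral inequality yields $\|\mathcal{K}_\delta f-f\|_{0,K}\leq c\delta\|\nabla f\|_{0,D_K}\leq c\delta\|f\|_{1,D_K}$, i.e.\ the case $s=0$, $t=1$. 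For the case $s=t=1$ one differentiates under the integral sign, using the chain rule with Jacobian $\mb{J}_\delta$, writes $\nabla(\mathcal{K}_\delta f)-\nabla f = (\mathbb{J}_\delta^{\mathsf T}-I)\,\mathcal{K}_\delta(\nabla f) + (\mathcal{K}_\delta(\nabla f)-\nabla f)$, and bounds the first term by $\|\mathbb{J}_\delta^{\mathsf T}-I\|_{L^\infty}\|\nabla f\|_{0,D_K}\leq c\delta\|f\|_{1,D_K}$ (the uniform convergence rate of the Jacobian) and the second by the already-proven $s=0,t=1$ estimate applied componentwise; this gives $\|\mathcal{K}_\delta f-f\|_{1,K}\leq c\|f\|_{1,D_K}$, which is the trivial-factor endpoint needed for interpolation.

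Having the two endpoints $(s,t)=(0,1)$ with factor $\delta$ and $(s,t)=(1,1)$ with factor $1=\delta^0$, I would obtain the general estimate for $0\le s\le t\le 1$ by real interpolation of the operator $\mathcal{K}_\delta-\mathrm{id}$, exactly as in \cite[Section 3]{Ern2016}: interpolating between $H^0(D_K)\to H^0(K)$ (factor $\delta$, from the $s=0,t=1$ bound read at fixed $t=1$, combined with the trivial $L^2$ bound) and $H^1(D_K)\to H^1(K)$ gives the scale in $s$ for $t=1$, and a second interpolation in the smoothness index $t$ between $t=0$ (where $\|\mathcal K_\delta f-f\|_{0,K}\le c\|f\|_{0,D_K}$, factor $\delta^0$) and $t=1$ produces the full range; since the macro-element $D_K$ is fixed and the interpolation constants depend only on it, the constant $c$ is independent of $h$ and of $\delta\in(0,\delta_0]$. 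The main obstacle is the bookkeeping at the endpoints: one must be careful that the Sobolev norms are taken over $D_K$ on the right and $K$ on the left (so locality of the mollification and $\delta\le\delta_0$ small are genuinely used), and that the factor $\delta^{t-s}$ really comes out with the correct exponent — this is where the uniform $O(\delta)$ bounds on $\varphi_\delta-\mathrm{id}$ and $\mathbb{J}_\delta-I$ from \cite{Ern2016} are essential and cannot be skipped.
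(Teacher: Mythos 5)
Your argument is correct and is essentially the proof the paper intends: the paper's own proof is a one-line deferral to the global version in Ern--Guermond (their Theorem 3.5), whose strategy is exactly your endpoint-plus-interpolation scheme, localized via the $O(\delta)$ support of the averaging kernel so that values on $K$ only see $f$ on $D_K$. One small correction: in the $s=t=1$ endpoint, the term $\mathcal{K}_\delta(\nabla f)-\nabla f$ should be bounded in $L^2(K)$ by the trivial $s=t=0$ estimate (local boundedness plus the triangle inequality, giving $c\|\nabla f\|_{0,D_K}$), not by the $(s,t)=(0,1)$ estimate applied componentwise to $\nabla f$, which would require $f\in H^2(D_K)$; with that substitution the endpoint, and hence the interpolation, go through.
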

\begin{proof}
	The result follows from similar arguments as in the proof of
	the global version \cite[Theorem 3.5]{Ern2016}.
\end{proof}

The following inverse estimate will be useful, we give a proof
for complements.

\begin{proposition}[Local inverse estimate]
	\label{pop:mol-inverse} 
	For any $K\in \mathcal{T}_h$, there is a $c>0$ such that 
	\begin{equation}
		\label{eq:mol-inverse}
		\|\mathcal{K}_\delta f\|_{s,K}\leq c \delta^{t-s} \|f\|_{t,D_K},
	\end{equation}
	for all $f\in H^s(D_K)$, $\delta\in
	(0,\delta_0]$, and $0\leq t\leq s\leq 1$.
\end{proposition}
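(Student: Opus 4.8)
The plan is to derive the local inverse estimate \eqref{eq:mol-inverse} by combining the local boundedness in Proposition \ref{pop:mol-stab} with a genuine inverse estimate that is available because $\mathcal{K}_\delta f$ is smooth: for fixed $\delta$, $\mathcal{K}_\delta f \in C^1(\overline{\Omega})$, and the mollification acts at length scale $\delta r$, so differentiating it costs a factor $\delta^{-1}$. The key identity is the representation
\[
	(\mathcal{K}_\delta^{\mr{g}}f)(\bm{x})
	= \int_{B(\bm{0},1)} \rho(\bm{y})\, f(\varphi_\delta(\bm{x})+\delta r\bm{y})\,\mathrm{d}\bm{y},
\]
from which, using a change of variables $\bm{z}=\varphi_\delta(\bm{x})+\delta r\bm{y}$ on the ball $B(\varphi_\delta(\bm{x}),\delta r)$ together with the uniform boundedness of $\mathbf{J}_\delta$ and of $\mathbf{J}_\delta^{-1}$, one sees $\nabla(\mathcal{K}_\delta^{\mr{g}}f)(\bm{x}) = \mathbf{J}_\delta(\bm{x})^{\!\top}\!\int_{B(\bm{0},1)}(\delta r)^{-1}\,(\nabla\rho)(\bm{y})\, f(\varphi_\delta(\bm{x})+\delta r\bm{y})\,\mathrm{d}\bm{y}$ after integrating by parts in $\bm{y}$. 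This yields, for integer orders, $\|\mathcal{K}_\delta f\|_{1,K}\le c\,\delta^{-1}\|f\|_{0,D_K}$, and more generally $\|\mathcal{K}_\delta f\|_{1,K}\le c\,\delta^{-1}\|f\|_{0,D_K}$ and $\|\mathcal{K}_\delta f\|_{1,K}\le c\,\|f\|_{1,D_K}$ by Proposition \ref{pop:mol-stab} with $s=1$.

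Next I would obtain the four integer-endpoint cases $(s,t)\in\{(0,0),(1,0),(1,1)\}$ — the case $(0,0)$ being Proposition \ref{pop:mol-stab}, the case $(1,1)$ being Proposition \ref{pop:mol-stab} with $s=1$, and the case $(s,t)=(1,0)$ the inverse bound just sketched — and then fill in the remaining cases with $s,t\in(0,1)$ by interpolation. Concretely, for fixed $t\in\{0,1\}$ the map $f\mapsto \mathcal{K}_\delta f$ is bounded $H^t(D_K)\to H^0(K)$ and $H^t(D_K)\to H^1(K)$ with the stated $\delta$-powers, so real interpolation between these (using $H^s(K)=[L^2(K),H^1(K)]_{s,2}$ and the corresponding identification on $D_K$) gives the bound $\|\mathcal{K}_\delta f\|_{s,K}\le c\,\delta^{t-s}\|f\|_{t,D_K}$ for all $s\in(0,1)$; a second interpolation step in the $t$-variable, between $t=0$ and $t=1$ at fixed $s$, delivers the general $0\le t\le s\le 1$. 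Since the exponent $\delta^{t-s}$ is log-linear in the interpolation parameter, the interpolated constants carry through with the correct power of $\delta$.

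The main obstacle I anticipate is the interpolation argument on the \emph{local} patches: the spaces $H^s(K)$ and $H^s(D_K)$ are defined by real interpolation only on the whole domain in the paper's setup, so I must justify that interpolating the operator $\mathcal{K}_\delta:H^t(D_K)\to H^s(K)$ is legitimate — for this it suffices that $D_K$ and $K$ are Lipschitz domains admitting Sobolev extension operators with constants depending only on shape-regularity, which holds here, so that $[L^2(D_K),H^1(D_K)]_{s,2}=H^s(D_K)$ with uniformly equivalent norms, and likewise for $K$. The other delicate point is tracking the $\delta$-dependence of the constant through interpolation; this is handled by noting the operator norms at the two endpoints are $c$ and $c\delta^{-1}$ (or $c$ and $c$), and the $K$-functional interpolation bound $\|T\|_{[\,\cdot\,]_{\theta}}\le \|T\|_0^{1-\theta}\|T\|_1^{\theta}$ produces exactly $\delta^{-\theta}=\delta^{t-s}$. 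Everything else is routine and parallels \cite[Section 3]{Ern2016}.
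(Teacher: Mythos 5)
Your proposal follows essentially the same route as the paper: the key endpoint bound $\|\nabla(\mathcal{K}_\delta f)\|_{0,K}\le c\,\delta^{-1}\|f\|_{0,D_K}$ is obtained by the identical integration by parts in $\bm{y}$ in the mollification formula (up to a sign you dropped on $(\nabla\rho)$, which is immaterial), and the general case $0\le t\le s\le 1$ is then deduced from this together with the local boundedness of Proposition \ref{pop:mol-stab} by real interpolation, which is exactly the paper's appeal to the Lions--Peetre theorem. Your extra care about justifying interpolation on the local patches $K$ and $D_K$ is a reasonable elaboration of a point the paper leaves implicit, but it does not change the argument.
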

\begin{proof}
	We only show 
	\[
		\|\nabla(\mathcal{K}_{\delta}f)\|_{0,K}
		\leq c\delta ^{-1} \|f\|_{0,D_K},
	\]
	since the proposition is a direct result of local stability
	given in Proposition \ref{pop:mol-stab} and the Lions-Peetre
	theorem \cite[Chapter 26]{Tartar2007}. For any $f\in
	H^1(D_K)$, integration by parts with respect to $\bm{y}$
	leads to
	\begin{equation*} 
	\begin{split}
		\nabla(\mathcal{K}_\delta f)(\bm{x}) 
		&= \int_{B(0,1)}\rho(\bm{y}) \nabla (f
		(\varphi_{\delta}(\bm{x})+\delta r \bm{y}))\,\mathrm{d}\bm{y} \\
		&= \int_{B(0,1)}\rho(\bm{y})\mb{J}_\delta^T (\bm{x}) (\nabla f)
		(\varphi_{\delta}(\bm{x})+\delta r \bm{y})\,\mathrm{d}\bm{y} \\
		&= -\mb{J}_\delta^T (\bm{x})(\delta r)^{-1} \int_{B(0,1)}(\nabla\rho)(\bm{y})
		f(\varphi_{\delta}(\bm{x})+\delta r \bm{y})\,\mathrm{d}\bm{y} .
	\end{split}
	\end{equation*}
	By noticing the fact that $\mb{J}_\delta (\bm{x})$ converges
	to the identity uniformly and using Cauchy-Schwarz
	inequality, we have
	\begin{equation*} 
	\begin{split}
		\|\nabla(\mathcal{K}_{\delta}f)\|_{0,K}^2 
		&= \int_{K}\left| \mb{J}_{\delta}^T(\bm{x}) (\delta r)^{-1} 
			\int_{B(0,1)}(\nabla \rho)(\bm{y}) f(\varphi_{\delta}(\bm{x}) + 
			\delta r \bm{y})\,\mathrm{d}\bm{y} \right|^2 \,\mathrm{d}\bm{x} \\
		&\leq |\mb{J}_{\delta}^T|^2(\delta r)^{-2} \int_{K}\left| 
			\int_{B(0,1)}(\nabla \rho)(\bm{y}) f(\varphi_{\delta}(\bm{x}) + 
			\delta r \bm{y})\,\mathrm{d}\bm{y} \right|^2 \,\mathrm{d}\bm{x} \\
		&\leq c\delta ^{-2} 
			\int_{B(0,1)}|(\nabla \rho)(\bm{y})|^2 \,\mathrm{d}\bm{y}
			\int_{K}\int_{B(0,1)} \left| f(\varphi_{\delta}(\bm{x}) + 
			\delta r \bm{y})\right|^2 \,\mathrm{d}\bm{y}  \,\mathrm{d}\bm{x} .
	\end{split}
	\end{equation*}
	We again apply the change of variables $K\ni
	\bm{x}\longmapsto \bm{z} = \varphi_{\delta}(\bm{x})+\delta r
	\bm{y}\in D_K$, thus we have 
	\begin{equation*} 
	\begin{split}
		\|\nabla(\mathcal{K}_{\delta}f)\|_{0,K}^2 
		&\leq c\delta ^{-2} \int_{B(0,1)}\int_{D_K}
			|f(\bm{z})|^2 |J^{-1}_\delta(\bm{z})| \,\mathrm{d}\bm{z} 
			\,\mathrm{d}\bm{y}\\
		&\leq c\delta ^{-2} \|f\|_{0,D_K}^2 .
	\end{split}
	\end{equation*}
\end{proof}

\subsection*{A.2.\ Proof of Proposition \ref{pop:Ih}}

We only show the first approximation property because the others
can be proven similarly. We claim for all $\bm{u}\in
H(\mathrm{curl},\Omega)\cap H^s(\mc{T}_h),\, s\in
[0,\frac{1}{2})$
\begin{equation}
	\label{eq:Ih curl}
	\|\bm{u}-\widetilde{\mathcal{I}}_h^{\mathrm{c}}\bm{u}\|_{0,K} 
		\leq c \left( h_K^{s}\|\bm{v}\|_{s,D_K} 
		+h_K\|\nabla \times \bm{u}\|_{0,D_K} \right).
\end{equation}
To this end, by using the triangle inequality, we have
\begin{equation*}
	\|\bm{u}-\widetilde{\mathcal{I}}_h^c \bm{u}\|_{0,K}
	\leq \|\bm{u}-\mathcal{K}_{\delta}^c\bm{u}\|_{0,K}+
		\|\mathcal{K}_{\delta}^c\bm{u}-\mathcal{I}_h^c
		\mathcal{K}_{\delta}^c\bm{u}\|_{0,K}.
\end{equation*}
From the definition of $\delta(\bm{x})$ \eqref{eq:delta def}, there exist constants $c^{\prime},c^{\prime\prime}$ such that 
\begin{equation}
\label{eq:delta}
	c^{\prime}h_K\leq \delta(\bm{x})\leq c^{\prime\prime}h_K, 
	\quad \forall\, \bm{x}\in D_K.
\end{equation}
By using the local convergence property \eqref{eq:mol-conv} and
\eqref{eq:delta}, it holds that
\begin{equation*}
	\|\bm{u}-\mathcal{K}_{\delta}^c\bm{u}\|_{0,K}
	\leq c \delta^{s}\|\bm{u}\|_{s,D_K}
	\leq c h_K^{s}\|\bm{u}\|_{s,D_K}.
\end{equation*}
From the proof of \cite[Theorem 5.41]{Monk2003}, we obtain
\begin{equation*}
\begin{split}
	\|\mathcal{K}_{\delta}^c\bm{u}-\mathcal{I}_h^c
	\mathcal{K}_{\delta}^c\bm{u}\|_{0,K}
	&\leq c \left( h_K \|\mathcal{K}_{\delta}^{\mathrm{d}} 
	\bm{u}\|_{1,K}	+ h_K^{2} |\mathcal{K}_{\delta}^{\mathrm{d}} 
	\nabla\times \bm{u}|_{1,K} \right) \\
	&\leq c \left( h_K\delta^{s-1} \|\bm{u}\|_{s,D_K}
	+h_K^{2} \delta^{-1} \|\nabla\times \bm{u}\|_{0,D_K} \right) \\
	&\leq c \left( h_K^{s}  \|\bm{u}\|_{s,D_K}
	+h_K  \|\nabla\times \bm{u}\|_{0,D_K} \right),
\end{split}
\end{equation*}
where we have used the local inverse estimate \eqref{eq:mol-inverse} and
\eqref{eq:delta}. 
Combining above two results leads to \eqref{eq:Ih curl}.

\section*{Appendix B.\ Discrete Friedrichs inequality}
\label{app:ellipticity}

An alternative way to prove the ellipticity of $A_h$ on the
kernel of $B_h$ (see Lemma \ref{lem:ellipticity}) is to use the
following discrete Friedrichs inequality, which is stated in
\cite[Lemma 7.6]{Buffa2006}.

\begin{lemma}[Discrete Friedrichs inequality]
	\label{lem:discrete Friedrichs}
	There holds
	\[
		\|\varepsilon^{\frac{1}{2}}\bm{v}\|_{0,\Omega} 
		\leq C |\bm{v}|_{V(h)}, \quad \forall \bm{v} 
		\mbox{ such that } (\bm{v},\eta)\in \mr{Ker}(B_h).
	\]
	Here, the positive constant $C$ is independent of the mesh size.
\end{lemma}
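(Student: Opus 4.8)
The plan is to adapt the classical proof of Maxwell-type discrete Friedrichs inequalities (this is essentially \cite[Lemma~7.6]{Buffa2006}), combining the conforming projection $\Pi_h^{\mr{c}}$ of Lemma~\ref{lem:operator Pi 2} with a discrete Helmholtz decomposition. Fix $\bm v\in V_h$ such that $(\bm v,\eta)\in\mr{Ker}(B_h)$ for some $\eta\in M_h$, and split $\bm v=\Pi_h^{\mr{c}}\bm v+\bm v^{\perp}$ with $\bm v^{\perp}:=\bm v-\Pi_h^{\mr{c}}\bm v$. Summing the bound of Lemma~\ref{lem:operator Pi 2} over $K\in\mc{T}_h$ and using the shape regularity of $\mc{T}_h$ together with \eqref{eq:coefficients}, one first records
\[
	\|\varepsilon^{\frac12}\bm v^{\perp}\|_{0,\Omega}\le C\,h\,|\bm v|_{V(h)},
	\qquad
	\|\nabla_h\times\bm v^{\perp}\|_{0,\Omega}\le C\,|\bm v|_{V(h)},
\]
so that also $\|\nabla\times\Pi_h^{\mr{c}}\bm v\|_{0,\Omega}\le C\,|\bm v|_{V(h)}$. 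Since $\Pi_h^{\mr{c}}\bm v\in V_{h0}^{\mr{c}}\subset H_0(\mr{curl},\Omega)$, it remains to bound $\|\varepsilon^{\frac12}\Pi_h^{\mr{c}}\bm v\|_{0,\Omega}$ by $C\,|\bm v|_{V(h)}$.

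For this I would use the $(\cdot,\cdot)_{\varepsilon}$-orthogonal discrete Helmholtz decomposition $\Pi_h^{\mr{c}}\bm v=\bm z_h+\nabla\phi_h$, where $\phi_h$ lies in the $H_0^1(\Omega)$-conforming Lagrange space of degree $\ell$ (a subspace of $Q_h$), $\nabla\phi_h\in V_{h0}^{\mr{c}}$ by the discrete de Rham sequence, and $\bm z_h\in V_{h0}^{\mr{c}}$ satisfies the discrete divergence constraint $(\varepsilon\bm z_h,\nabla\psi_h)=0$ for all such $\psi_h$. The gradient contribution is controlled directly by the kernel constraint: since $\phi_h\in Q_h\cap H_0^1(\Omega)$ one has $\llbracket\phi_h\rrbracket_N=\bm 0$, hence $B_h(\bm v,\eta;\phi_h)=-(\varepsilon\bm v,\nabla\phi_h)$, and $(\bm v,\eta)\in\mr{Ker}(B_h)$ forces $(\varepsilon\bm v,\nabla\phi_h)=0$. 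Using orthogonality and $\Pi_h^{\mr{c}}\bm v=\bm v-\bm v^{\perp}$,
\[
	\|\varepsilon^{\frac12}\nabla\phi_h\|_{0,\Omega}^{2}
	=(\varepsilon\,\Pi_h^{\mr{c}}\bm v,\nabla\phi_h)
	=-(\varepsilon\bm v^{\perp},\nabla\phi_h)
	\le\|\varepsilon^{\frac12}\bm v^{\perp}\|_{0,\Omega}\,\|\varepsilon^{\frac12}\nabla\phi_h\|_{0,\Omega},
\]
whence $\|\varepsilon^{\frac12}\nabla\phi_h\|_{0,\Omega}\le C\,h\,|\bm v|_{V(h)}$.

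It then remains to control the divergence-free part via the uniform (in $h$) discrete Friedrichs inequality $\|\varepsilon^{\frac12}\bm z_h\|_{0,\Omega}\le C\,\|\nabla\times\bm z_h\|_{0,\Omega}$ for discretely divergence-free N\'ed\'elec functions with vanishing tangential trace; combined with $\|\nabla\times\bm z_h\|_{0,\Omega}=\|\nabla\times\Pi_h^{\mr{c}}\bm v\|_{0,\Omega}\le C\,|\bm v|_{V(h)}$ and $\|\varepsilon^{\frac12}\bm v\|_{0,\Omega}\le\|\varepsilon^{\frac12}\bm z_h\|_{0,\Omega}+\|\varepsilon^{\frac12}\nabla\phi_h\|_{0,\Omega}+\|\varepsilon^{\frac12}\bm v^{\perp}\|_{0,\Omega}$, this closes the proof. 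This last step is the main obstacle: the uniform discrete Friedrichs inequality is equivalent to the discrete compactness property of the edge-element space and is genuinely non-elementary (it fails for ill-chosen element families). I would obtain it in the standard way, by a contradiction argument in which the commuting smoothed interpolations of Lemmas~\ref{lem:diagram} and~\ref{lem:diagram 2} provide an $L^2$-stable, curl-commuting quasi-interpolation and the compact embedding $W\subset\subset L_{\varepsilon}^2(\Omega)^d$ stated below \eqref{eq:Helmholtz} supplies the limiting compactness; equivalently, the whole of Lemma~\ref{lem:discrete Friedrichs} may be recast as a single argument by contradiction, in which a sequence $\bm v_{h_n}\in V_{h_n}$ with $\|\varepsilon^{\frac12}\bm v_{h_n}\|_{0,\Omega}=1$ and $|\bm v_{h_n}|_{V(h_n)}\to 0$ forces, by the reductions above, $\|\varepsilon^{\frac12}\nabla\phi_{h_n}\|_{0,\Omega}\to 0$ and $\|\nabla\times\bm z_{h_n}\|_{0,\Omega}\to 0$, while discrete compactness extracts an $L^2$-convergent subsequence of $(\bm z_{h_n})$ whose limit is curl-free, $\varepsilon$-divergence-free and tangentially vanishing, hence zero — contradicting $\|\varepsilon^{\frac12}\bm v_{h_n}\|_{0,\Omega}=1$.
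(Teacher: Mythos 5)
Your argument is correct, but it takes a different route from the paper, which is much terser: the paper's proof only observes that testing $B_h(\bm v,\eta;\cdot)$ with conforming $q\in Q_h^{\mathrm c}=Q_h\cap Q$ kills both the jump term and the $\gamma_F$ term, so that $(\varepsilon\bm v,\nabla q)=0$ for all such $q$, i.e.\ $\bm v\in K_h^{\perp}$ as defined in \eqref{eq:Kh}, and then cites \cite[Lemma~7.6]{Buffa2006} verbatim for the inequality on $K_h^{\perp}$. You share that first reduction (your observation that $\llbracket\phi_h\rrbracket_N=\bm 0$ forces $(\varepsilon\bm v,\nabla\phi_h)=0$ is exactly the paper's verification of the hypothesis), but instead of citing the external lemma you essentially re-derive it: conforming projection $\Pi_h^{\mathrm c}$ to strip off the nonconforming part (with the $O(h)$ gain in $L^2$ from Lemma~\ref{lem:operator Pi 2}), a discrete Helmholtz decomposition of the conforming remainder, control of the gradient component by the kernel constraint plus $\varepsilon$-orthogonality, and the uniform discrete Friedrichs inequality for discretely divergence-free edge elements for the remaining piece. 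This is, in structure, how Buffa and Perugia prove their Lemma~7.6, so what you gain is transparency about where the genuinely deep ingredient sits — the last step, equivalent to discrete compactness of the N\'ed\'elec family, which you correctly flag and only sketch (via the compact embedding $W\subset\subset L^2_{\varepsilon}(\Omega)^d$ and a contradiction argument, where simple connectedness and connected boundary are needed to conclude the limit vanishes). What the paper's citation buys is brevity; what your version buys is a self-contained account that makes explicit which properties of the spaces ($\nabla Q_h^{\mathrm c}\subset V_{h0}^{\mathrm c}$, discrete compactness) are actually being used. Both are valid; yours would need the discrete compactness step filled in to be complete at the same level of rigor as the rest.
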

\begin{proof}
	Suppose $(\bm{v},\eta)\in \mr{Ker}(B_h)$. Then, there holds 
	\[
		0=B_h(\bm{v},\eta;q)= -\int_{\Omega}\varepsilon \bm{v}\cdot 
		\nabla_h q \,\mathrm{d}\bm{x}, \qquad \forall 
		q\in Q_h^c = Q_h\cap Q,
	\]
	that is $\bm{v}$ belongs to the following set 
	\begin{equation}
		\label{eq:Kh}
		K_h^{\perp} := \{\bm{w}\in V_h:\ (\varepsilon \bm{w}, \nabla q)=(\bm{w}, \nabla q)_{V(h)}=0, 
		\quad \forall\,\bm{q}\in Q_h^{\mathrm{c}} \}.
	\end{equation}
	Here, $(\cdot,\cdot)_{V(h)}$ is the natural inner product
	which produces $\|\cdot\|_{V(h)}$. As stated in \cite[Lemma
	7.6]{Buffa2006}, we have
	\[
		\|\varepsilon^{\frac{1}{2}}\bm{v}\|_{0,\Omega} 
		\leq C |\bm{v}|_{V(h)}, \quad \forall \bm{v}\in K_h^{\perp}.
	\]
	Hence, we can directly conclude our result.
\end{proof}

\section*{Appendix C.\ Uniform convergence}
\label{app:uniform conv}

We prove Proposition \ref{pop:uni conv}, i.e., the uniform
convergence of $T_h$ to $T$. To this end, we follow the
procedure of \cite[Section 4.2]{Buffa2006}. 

Define the space 
\[
	Z_h:=\left\{ \bm{v}\in V_h:\ (\bm{v},\eta)\in \mr{Ker}(B_h),
		\ \forall\eta\in M_h\right\}.
\]
It is easy to see that $Z_h$ contains the range of $T_h$. We
define $Q_h^c := Q_h\cap Q$ and introduce an orthogonal decomposition of $V_h$:
\[
	V_h = K_h \oplus K_h^{\perp},
\]
where $K_h=\nabla Q_h^c$ and $K_h^{\perp}$ is defined by
\eqref{eq:Kh}.  It follows from the proof of Lemma \ref{lem:discrete Friedrichs} that $Z_h\subseteq K_h^{\perp}$.

Furthermore, from the definition of the solution operators $T,\
T_p$ and $T_h,\ T_{p,h}$ \eqref{eq:TandTp}-\eqref{eq:ThandTph},
we easily obtain the following lemma.

\begin{lemma}
	\label{lem:}
	For all $\bm{f}_h^0\in K_h$, there hold 
	\[
		T \bm{f}_h^0= T_h \bm{f}_h^0=\bm{0},\quad \mbox{and }\quad
		T_{p} \bm{f}_h^0= T_{p,h} \bm{f}_h^0=\bm{f}_h^0.
	\]
\end{lemma}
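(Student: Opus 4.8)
The plan is to unwind the definitions of the solution operators directly, since the claim is essentially a bookkeeping statement about how a pure-gradient source interacts with the divergence constraint. Fix $\bm{f}_h^0\in K_h$, so $\bm{f}_h^0=\nabla \phi_h$ for some $\phi_h\in Q_h^{\mathrm{c}}=Q_h\cap Q$. I would first verify that the continuous pair $(\bm 0,\phi_h)\in V\times Q$ solves the continuous problem \eqref{eq:conForm1}--\eqref{eq:conForm2} with $k=0$ and right-hand side $(\varepsilon\bm{f}_h^0,\bm{v})=(\varepsilon\nabla\phi_h,\bm v)=-b(\bm v,\phi_h)$: indeed $a(\bm 0,\bm v)+b(\bm v,\phi_h)=b(\bm v,\phi_h)$ must equal $(\varepsilon\bm{f}_h^0,\bm v)$, and since $b(\bm v,\phi_h)=-(\varepsilon\bm v,\nabla\phi_h)=-(\varepsilon\bm{f}_h^0,\bm v)$ this forces the bilinear form $a(\cdot,\cdot)$ together with sign conventions to match — here I need to be slightly careful about the sign in the definition $b(\bm v,p)=-(\varepsilon\bm v,\nabla p)$, so the correct continuous solution may be $(\bm 0,-\phi_h)$ or $(\bm 0,\phi_h)$ depending on the source normalization used in \eqref{eq:TandTp}. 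By the uniqueness in Lemma~\ref{lem:uniqueness} this is \emph{the} solution, giving $T\bm{f}_h^0=\bm 0$ and $T_p\bm{f}_h^0=\bm{f}_h^0$ (after identifying the Lagrange multiplier $p$ with its gradient as the statement tacitly does, writing $T_p\bm{f}_h^0=\bm{f}_h^0$).

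Next I would do the discrete analogue. Here the key point is that $(\bm 0,\phi_h)\in V_h\times Q_h$ is admissible because $\phi_h\in Q_h^{\mathrm c}\subseteq Q_h$, and crucially $\llbracket\phi_h\rrbracket_N=\bm 0$ since $\phi_h$ is continuous and vanishes on $\Gamma$, so all lifting terms $\mathcal R_F(\llbracket\phi_h\rrbracket_N)$ vanish and $c_h(\phi_h,q)=0$ for all $q$. Plugging $(\bm u_h,p_h)=(\bm 0,\phi_h)$ into \eqref{eq:MDG1}--\eqref{eq:MDG2} with $k=0$: equation \eqref{eq:MDG2} reads $b_h(\bm 0,q)-c_h(\phi_h,q)=0$, which holds trivially; equation \eqref{eq:MDG1} reads $a_h(\bm 0,\bm v)+b_h(\bm v,\phi_h)=b_h(\bm v,\phi_h)=-(\varepsilon\bm v,\nabla_h\phi_h)+(\mathcal R(\llbracket\phi_h\rrbracket_N),\varepsilon\bm v)=-(\varepsilon\bm v,\nabla\phi_h)$, which equals the chosen right-hand side. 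So by the uniqueness of the discrete solution (Theorem~\ref{thm:indefUniqueness} with $k=0$, already proved in Section~\ref{subs:auxiliary}) we get $T_h\bm{f}_h^0=\bm 0$ and $T_{p,h}\bm{f}_h^0=\phi_h=\bm{f}_h^0$, matching the continuous result.

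Assembling these two computations yields the lemma. There is no real obstacle here; the only thing requiring care is the bookkeeping of signs and normalizations in the definition of the source associated with $\bm{f}_h^0\in K_h$ and the identification of $p$ with $\nabla p$ implicit in writing $T_p\bm{f}_h^0=\bm{f}_h^0$, so I would state explicitly at the start which convention I adopt (namely that $T,T_h$ are defined via $\bm j$ appearing as $(\bm j,\bm v)$ on the right, and that a source of the form $\bm j=\varepsilon\bm{f}_h^0$ with $\bm{f}_h^0=\nabla\phi_h$ produces multiplier $p=\phi_h$ so that $\nabla p=\bm{f}_h^0$). The essential structural fact being used — that a gradient source is ``invisible'' to the electric field and is absorbed entirely by the multiplier, both at the continuous and discrete level, precisely because $\nabla Q_h^{\mathrm c}\subset V_h$ and $\llbracket\phi_h\rrbracket_N=0$ — is exactly what makes the discrete and continuous solution operators agree on $K_h$, which is what subsequent uniform-convergence arguments need.
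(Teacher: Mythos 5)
Your proposal is correct and follows essentially the same route as the paper, which simply asserts that $(\bm{0},\nabla q)$ solves both the continuous and the discrete system with source $\bm{j}=\varepsilon\nabla q$ and leaves the verification to the reader; you carry out that verification explicitly, including the key observations that $\llbracket \phi_h\rrbracket_{N}=\bm{0}$ kills all lifting terms and that uniqueness of both problems then identifies the solution. The sign ambiguity you flag (the multiplier is $-\phi_h$ rather than $+\phi_h$ under the paper's convention $b(\bm{v},p)=-(\varepsilon\bm{v},\nabla p)$) is an imprecision present in the paper's own statement and proof as well, and it is harmless for the intended application, since only $T\bm{f}_h^0=T_h\bm{f}_h^0=\bm{0}$ and the agreement of $T_p$ with $T_{p,h}$ on $K_h$ are used in the uniform-convergence argument.
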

\begin{proof}
	Since $\bm{f}_h^0\in K_h$, there exists $q\in Q_h^c$ such
	that $\bm{f}_h^0=\nabla q$. It is easy to check that
	$(\bm{0}, \nabla q)$ solves
	\eqref{eq:conForm1}-\eqref{eq:conForm2} and
	\eqref{eq:MDG1}-\eqref{eq:MDG2} simultaneously with
	$\bm{j}=\varepsilon \nabla q$.
\end{proof}

Thus, the uniform convergence \eqref{eq:Th convergence} of $T_h$
to $T$ follows directly from \cite[Proposition 4.4]{Buffa2006},
combining with the regularity estimates given in Lemma
\ref{lem:regularity} and the error estimate given in Theorem
\ref{thm:defEnergyNorm}.

\end{document}